\documentclass[11pt]{article}
\usepackage{amsmath,empheq} 
\usepackage{verbatim}
\usepackage{amssymb, float}
\usepackage{color}
\usepackage{graphicx,epsfig}
\usepackage[applemac]{inputenc}
\pagestyle{myheadings}
\title{Nonconvex integro-differential sweeping process with applications.}
\topmargin -1cm
\textheight 21cm
\textwidth 15cm
\oddsidemargin 1cm


\setcounter{page}{1}
\newtheorem{theorem}{Theorem}[section]

\newtheorem{lemma}[theorem]{Lemma}
\newtheorem{proposition}[theorem]{Proposition}
\newtheorem{definition}[theorem]{Definition}
\newtheorem{example}[theorem]{Example}



\def\beq{\begin{equation}}
\def\eeq{\end{equation}}
\def\baq{\begin{eqnarray}}
\def\eaq{\end{eqnarray}}
\def\baqn{\begin{eqnarray*}}
\def\eaqn{\end{eqnarray*}}





%






\def\image #1 (#2,#3) (echelle #4) #5{
\dimen2=#2
\dimen3=#3
\divide \dimen2 by 1000
\multiply \dimen2 by #4
\divide \dimen3 by 1000
\multiply \dimen3 by #4
\setbox1 =\vbox to \dimen2{\hsize=\dimen3\vfill\special{picture #1
scaled #4}}
\vbox{\hsize=\dimen3\box1\medskip\centerline{#5}}
}





\begin{document}
\author{Abderrahim Bouach\thanks{Laboratoire LMPEA,  Facult\'{e} des Sciences Exactes et Informatique,
Universit\'{e} Mohammed Seddik Benyahia, Jijel, B.P. 98, Jijel 18000, Alg\'{e}rie  ({\tt abderrahimbouach@gmail.com}).}
        \and Tahar Haddad\thanks{ Laboratoire LMPEA, Facult\'{e} des Sciences Exactes et Informatique,
Universit\'{e} Mohammed Seddik Benyahia, Jijel, B.P. 98, Jijel 18000, Alg\'{e}rie  ({\tt haddadtr2000@yahoo.fr}).}
           \and Lionel Thibault\thanks{ Universit\'{e} de Montpellier, Institut Montpelli\'{e}rain Alexander Grothendieck 34095 Montpellier CEDEX 5 France ({\tt lionel.thibault@umontpellier.fr}).}}

\maketitle
\begin{abstract}\noindent In this paper, we analyze and discuss the well-posedness of a new variant of the so-called sweeping process, introduced by J.J. Moreau in the early 70's \cite{More71}  with motivation in plasticity theory. In this variant, the normal cone to the (mildly
non-convex) prox-regular moving set $C(t)$, supposed to have an absolutely continuous variation, is perturbed by a sum of a Carath\'{e}odory mapping and an integral forcing term. The integrand of the forcing term depends on two time-variables, that is, we study a general integro-differential sweeping process of Volterra
type. By setting up an appropriate semi-discretization method combined with a new Gronwall-like inequality (differential inequality), we show that the integro-differential sweeping process has one and only one absolutely continuous solution. We also establish the continuity of the solution with respect to the initial value. The results of the paper are applied to the study of nonlinear integro-differential complementarity systems which are  combination of Volterra integro-differential equations with  nonlinear complementarity constraints. Another application is concerned with  non-regular electrical circuits containing time-varying capacitors and nonsmooth electronic device like diodes. Both applications represent an additional novelty of our paper.
\end{abstract}

\noindent {\bf Keywords}Moreau's sweeping process, Volterra integro-differential equation,  Differential complementarity systems,  Gronwall's inequality, Prox-regular sets, Differential inclusions.\\
\noindent {\bf AMS subject classifications} 49J40,47J20, 47J22,  34G25, 58E35, 74M15, 74M10, 74G25.
\setcounter{tocdepth}{1}
\tableofcontents

\section{Introduction}
In the seventies, sweeping processes are introduced and deeply studied by J. J. Moreau
through a
series of papers, in particular \cite{More71,More77}. There, it is shown that such processes play an important role in elasto-plasticity, quasi-statics,
dynamics, especially in mechanics \cite{More71}. Roughly speaking, a point is swept by a moving closed
convex set $C(t)$ in a Hilbert space $H$, which can be formulated in the form of differential inclusion as
follows
\begin{equation}\label{eq0.1}
  \begin{cases} -\dot{x}(t)\in N_{C(t)}(x(t))\,\,\,\quad a.e.\,\,\,  t\in [T_{0}, T]&\\
   x(T_{0})=x_{0}\in C(T_{0}),
   \end{cases}
   \end{equation}
where $N_{C(t)}(\cdot)$ denotes the normal cone of $C(t)$ in the sense of convex analysis. When the systems are perturbed, it is natural to study the following variant
\begin{equation}\label{eq0.2}
  \begin{cases} -\dot{x}(t)\in N_{C(t)}(x(t))+f(t,x(t))\,\,\,\quad a.e.\,\,\,  t\in [T_{0}, T]&\\
   x(T_{0})=x_{0}\in C(T_{0})
   \end{cases}
   \end{equation}
where $f : [T_{0}, T] \times H \rightarrow H $ is a Carath\'{e}odory mapping.\\

Actually, diverse approaches for existence of solutions of (\ref{eq0.1}) and (\ref{eq0.2}) are available in the literature: Catching-up method (see, e.g., \cite{More77}), regularization procedure (see, e.g., \cite{More71,NacrThib1}), reduction to unconstrained differential inclusion (see, e.g., \cite{t}).
For the first and second methods, existence and uniqueness of solutions follow, to some extent, from the classical Gronwall inequality and the basic relation
\begin{equation}\label{eq0.3}
  \frac{d}{dt}\|x(t)\|^{2}=2\langle \dot x(t), x(t) \rangle,
   \end{equation}
(whenever meaningful), applied to two solutions or suitable approximate solutions $x_1,x_2$ of (\ref{eq0.2}), by means of the monotonicity of  $N_{C(t)}(\cdot)$ ( hypomonotonicity when $C(t)$ is prox-regular). Those features and the Lipschitz property of the forcing term $f$ with respect to the state variable are employed to obtain that the distance between $x_1(t)$ and $x_2(t)$ is nonincreasing with respect to time $t$. This reasoning allows in general the construction of a Cauchy sequence of approximate solutions, converging to a solution.

Several extensions of the sweeping process in diverse ways (well-posedness and optimal control) have been studied in
the literature (see, e.g., \cite{ah}, \cite{ha1}, \cite{bog}, \cite{tca}, \cite{KM}, \cite{t}, \cite{venl}  and references therein).

The present paper aims to study the following new variant of the sweeping process
\begin{equation}\label{eq1.1-N}
(P_{f_{1},f_{2}})  \begin{cases} -\dot{x}(t)\in N_{C(t)}(x(t))+f_{1}(t,x(t))+\displaystyle\int\limits_{T_{0}}^{t}f_{2}(t,s, x(s))ds\,\,\,\quad \text{a.e.}\,\,\,  t\in [T_{0}, T]&\\
   x(T_0)=x_{0}\in C(T_0),
   \end{cases}
   \end{equation}
where $ N_{C(t)}(\cdot)$ denotes the Clarke normal cone to the subset $C(t)$ of $H$.


    We will have to use the following assumptions:
\begin{enumerate}
	\item [$ \mathcal{(}\mathcal{H}_{1}\mathcal{)} $]
       For each $ t\in [T_{0},T] $, $ C(t) $ is a nonempty closed subset of $ H $ which is r -prox-regular, for some $r\in (0,+\infty]$,
     and has an absolutely continuous
	variation,  in the sense that there is some absolutely continuous function $ \upsilon : [T_{0},T]\longrightarrow \mathbb{R} $ such that
\begin{equation*}
 C(t) \subset C(s)+ \lvert \upsilon(s)-\upsilon(t) \rvert\mathbb{B}_H, \quad  \forall \;s,t\in [T_{0},T],
\end{equation*}	
where $\mathbb{B}_H$ denotes the closed unit ball of $H$ centered at the origin.
	\item  [$ \mathcal{(}\mathcal{H}_{2}\mathcal{)} $]  $ f_{1} : [T_{0},T]\times H \longrightarrow H $ is (Lebesgue) measurable in time (i.e., $f(\cdot,x)$ is measurable for each $x\in H$), and  such that
\begin{enumerate}
	\item [$ \mathcal{(}\mathcal{H}_{2,1}\mathcal{)} $] there exist non-negative functions $ \beta_{1}(\cdot)\in L^{1}([T_{0},T],\mathbb{R}) $ such that
	\begin{equation*}
	\lVert f_{1}(t,x)\rVert\leq \beta_{1}(t)(1+\lVert x \rVert),\,\,\,\text{for all}\,\,\,t\in[T_{0},T]\,\,\text{and for any}\,\,\,x\in\underset{t\in [T_{0},T]}\bigcup C(t){\color{blue};}
	\end{equation*}
	\item [$ \mathcal{(}\mathcal{H}_{2,2}\mathcal{)} $] for each real $ \eta>0 $ there exists a non-negative function $ L_{1}^{\eta}(\cdot)\in L^{1}([T_{0},T],\mathbb{R}) $  such that for any $ t\in [T_{0},T] $ and for any $ (x,y)\in {B}[0,\eta]\times {B}[0,\eta] $,
	\begin{equation*}
	\lVert f_{1}(t,x)-f_{1}(t,y) \rVert\leq L_{1}^{\eta}(t)\lVert x-y \rVert,
	\end{equation*}	
where $B[0,\eta]$ denotes the closed ball centered at the origin with radius $\eta$.
\end{enumerate}	
\item  [$ \mathcal{(}\mathcal{H}_{3}\mathcal{)} $]  $ f_{2} : Q_{\Delta}\times H \longrightarrow H $ is a measurable mapping such that
\begin{enumerate}
	\item [$ \mathcal{(}\mathcal{H}_{3,1}\mathcal{)} $] there exists a non-negative function $ \beta_{2}(\cdot,\cdot)\in L^{1}(Q_{\Delta},\mathbb{R}) $ such that
	\begin{equation*}
	\lVert f_{2}(t,s,x)\rVert\leq \beta_{2}(t,s)(1+\lVert x \rVert),\,\,\,\text{for all}\,\,\,(t,s)\in Q_{\Delta}\,\,\text{and for any}\,\,\,x\in\underset{t\in [T_{0},T]}\bigcup C(t){\color{blue};}
	\end{equation*}
	\item [$ \mathcal{(}\mathcal{H}_{3,2}\mathcal{)} $] for each real $ \eta>0 $ there exists a non-negative function
	$ L_{2}^{\eta}(\cdot)\in L^{1}([T_{0},T],\mathbb{R}) $  such that for
all $ (t,s)\in Q_{\Delta} $ and for any $ (x,y)\in {B}[0,\eta]\times {B}[0,\eta]$,
	\begin{equation*}
	\lVert f_{2}(t,s,x)-f_{2}(t,s,y) \rVert\leq L_{2}^{\eta}(t)\lVert x-y \rVert.
	\end{equation*}	
\end{enumerate}	
\end{enumerate}
Above $L^1([T_0,T],\mathbb{R})$ (resp. $L^1(Q_{\Delta},\mathbb{R})$) stands for the space of Lebesgue
integrable functions on $[T_0,T]$ (resp. $Q_{\Delta}$), where
$$
   Q_{\Delta}:=\{(t,s)\in [T_0,T]\times[T_0,T]: s \leq t\}.
$$

We called the differential inclusion (\ref{eq1.1-N}) as integro-differential sweeping process because the integral of the state and the velocity are defined in the dynamical system. One can interpret (\ref{eq1.1-N}) as follows: as long as $x(t)$ is in the interior of the set $C(t)$, we get
$N_{C(t)}(x(t)) = 0$ and (\ref{eq1.1-N}) reduces to a Volterra integro-differential equation
\begin{equation}\label{eq1.2}
  \begin{cases} -\dot{x}(t)=f_{1}(t,x(t))+\displaystyle\int\limits_{T_{0}}^{t}f_{2}(t,s, x(s))ds\,\,\,\quad a.e.\,\,\,  t\in [T_0, T]&\\
   x(T_0)=x_{0}
   \end{cases}
   \end{equation}
 (for at least a small period of time) to satisfy the constraint $x(t)\in C(t)$, until $x(t)$ hits the boundary of the set $C(t)$. At this moment, if the vector field $-(f_{1}(t,x(t))+\displaystyle\int\limits_{T_{0}}^{t}f_{2}(t,s, x(s))ds)$ is pointed outside of the set $C(t)$, then any component of this vector field in the direction normal to $C(t)$ at $x(t)$ must be annihilated to maintain the motion of $x$ within the constraint set. So, the system (\ref{eq1.1-N}) can be considered as a Volterra integro-differential equation (\ref{eq1.2}) under control term $u(t) \in N_{C(t)}(\cdot)$ which guarantees that the trajectory $x(t)$ always belongs to the desired set $C(t)$ for all $t \in [T_0, T]$.\\

The well-posedness of the classical perturbed sweeping process (\ref{eq0.2}), i.e.,$P_{f_{1},0}$ ( $f_2 \equiv 0 $), has been studied by many authors with different assumptions on data, see, e.g., \cite{edm,edmt,nacthi} and references therein. Sweeping process involving integral perturbation i.e., $P_{0,f_{2}}$ ( $f_1 \equiv 0 $)
 was considered earlier by  Brenier, Gangbo and Savare \cite{bgs} and recently by Colombo and Kozaily \cite{CK}. In the
latter paper \cite{CK} the authors proved the existence and uniqueness of solution
 with the particular integral $\displaystyle\int\limits_{0}^{t}f_{2}(s, x(s))ds$, i.e., for the following problem
 \begin{equation*}\label{eq1.1bis}
(P_{0,f_{2}})  \begin{cases} -\dot{x}(t)\in N_{C(t)}(x(t))+\displaystyle\int\limits_{0}^{t}f_{2}(s, x(s))ds\,\,\,\quad \text{a.e.}\,\,\,  t\in [0, T]&\\
   x(0)=x_{0}\in C(0).
   \end{cases}
   \end{equation*}

It is also worth mentioning that Colombo and Kozaily say in their
paper \cite{CK}: "\emph{of course, existence and uniqueness to $(P_{0,f_{2}})$ is not surprising}". Indeed, we observe that with the above integral $\displaystyle\int\limits_{0}^{t}f_{2}(s, x(s))ds$, the integro-differential sweeping process $(P_{f_{1},f_{2}})$ is equivalent to
\begin{equation*}
-\dot{x}(t)\in N_{C(t)}(x(t))+f_{1}(t,x(t))+y(t),\,\dot{y}(t)=f_{2}(t,x(t)),\,\,\,x(0)=x_{0},\,y(0)=0,
\end{equation*}
and so
\begin{equation*}
\overbrace{
\begin{pmatrix}
-\dot{x}(t)\\
-\dot{y}(t)
\end{pmatrix}}^{-\dot{X}(t)}\in  N_{C(t)\times H}\overbrace{
\begin{pmatrix}
x(t)\\
y(t)
\end{pmatrix}}^{X(t)}+\overbrace{
\begin{pmatrix}
f_{1}(t,x(t))+y(t)\\
-f_{2}(t,x(t))
\end{pmatrix}}^{f(t,X(t))},
\end{equation*}
which is a special case of the classical perturbed sweeping process (\ref{eq0.2}); see, e.g., \cite{More77,nacthi} for the situation of unbounded moving sets.\\
So, in \cite{CK}, the aim of the authors was not the well-posedness. Their motivation for studying $(P_{0,f_{2}})$ was designing a smoother method of
penalization, the motivation of which comes from applications to deriving necessary optimality conditions for optimal control problems with sweeping processes.\\

Now, if the integral involving $f_{2}$ depends on two \emph{time}-variables as in \eqref{eq1.1-N},  the reduction of $(P_{f_{1},f_{2}})$ there to the perturbed sweeping process \eqref{eq0.2} cannot be applied.\\

To the best of our knowledge, for the problem under consideration in the case of the
function $f_{2}$ depending on two \emph{time}-variables, that is, in the case of a general integro-differential sweeping process of Volterra
type $(P_{f_{1},f_{2}})$, a well-posedness result, including the existence, uniqueness, and
stability of the solution, has not been obtained up to the present time.\\

In the present paper, we obtain results on the existence and uniqueness of a
solution to the Volterra sweeping process  $(P_{f_{1},f_{2}})$ in
a Hilbert space.  This is done with the help of a new Gronwall-like inequality (see Section \ref{s3}) and of a new scheme corresponding to the existence of absolutely continuous solutions for the quasi-autonomous sweeping processes
\begin{equation*}
\left\{
\begin{array}{l}
-\dot{x}_{n}(t)\in N_{C(t)}(x_{n}(t))+f_{1}(t,x_{n}(t_{k}))+\displaystyle\sum\limits_{j=0}^{k-1}\displaystyle\int\limits_{t_{j}}^{t_{j+1}}f_{2}(t,s,x_{n}(t_{j}))\,ds\\
+\displaystyle\int\limits_{t_{k}}^{t}f_{2}(t,s,x_{n}(t_{k}))\,ds\;\; \text{ a.e. }t\in \left[ t_{k},t_{k+1}\right], \\
{x}_{n}(T_0)=x_0 \in C(T_0),
\end{array}%
\right.
\end{equation*}
where $T_0= t_0 < t_1 < ... < t_n = T$ is a discretization of the interval $[T_0, T]$.\\

 The outline of  the paper is as follows. In Section \ref{s2}, we recall some preliminary results that we use throughout. In Section \ref{s3}, we prove a new Gronwall-like inequality ( differential inequality). Then, in Section \ref{s4}, we present our main existence,
uniqueness, and stability result. In Section \ref{s5} we use those  results in the study of nonlinear integro-differential complementarity systems. This is realized by transforming such systems into integro-differential sweeping processes of the form \eqref{eq1.1-N} where the moving set $C(t)$ is described by a finite number of inequalities. We also provide sufficient verifiable conditions ensuring the absolute continuity of the moving set. Finally, in Section \ref{s6}, we give a second application of our results to non-regular electrical circuits containing time-varying capacitors and nonsmooth electronic device like diodes. Both applications represent an additional novelty of our paper.

\section{Notation and preliminaries}

Throughout $H$ is a real Hilbert space endowed with the inner
product $\langle \cdot,\cdot\rangle$ and associated norm $\Vert
\cdot\Vert$. As usual, we will denote by $\mathbb{B}_H$ or $\mathbb{B}$ the closed
unit ball of $H$ and by $B(x,\delta)$ (resp. $B[x,\delta]$) the open (resp.
closed) ball around $x\in H$ with radius $\delta>0$. For a nonempty
subset $S$ of $H$ the associated distance function is denoted by
$d_S$, that is,
\begin{equation*}
d_{S}(x):=\underset{y\in S}{\inf }\left\Vert x-y\right\Vert \text{
for all } x\in H.
\end{equation*}
By $\mathcal{C}([T_0,T],H)$ we denote the space of
continuous mappings from $[T_0,T]$ into $H$ equipped with the
supremum norm $\Vert \cdot \Vert_{\infty}$, where we recall that $-\infty<T_0 < T<+\infty$. As usual $\mathbb{R}$ will denote the set of real numbers, $\mathbb{R}_+$ the set of non-negative reals, that is, $\mathbb{R}_+:=[0,+\infty)$,  and $\mathbb{N}$ the set positive integers. In various cases, it will be convenient to use the notation write $I:=[T_0,T]$

The Clarke tangent cone of  $S$ at  $x\in S$, denoted by $T^{C}(S;x)$, is the set of $h\in H$ such that, for every sequence $(x_{n})_{n\in \mathbb{N}}$ of  $S$ with $x_{n} \longrightarrow x$ as $n\longrightarrow\infty$ and for every sequence $(t_{n})_{n\in \mathbb{N}}$ of positive reals with $t_{n}\longrightarrow 0$ as $n\longrightarrow\infty$, there exists a sequence $(h_{n})_{n\in \mathbb{N}}$ of $H$ with $h_{n} \longrightarrow h$ as $n\longrightarrow\infty$ satisfying $x_{n}+t_{n}h_{n}\in S$ for all $n\in \mathbb{N}$. This set is obviously a cone containing zero and it is known to be closed and convex. The polar cone of $T^{C}(S;x)$  is the Clarke normal cone $N^{C}(S;x)$ of $S$ at $x$, that is
\begin{equation*}
N^{C}_{S}(x):=\{\upsilon\in H\,:\,\,\langle \upsilon , h \rangle \leq 0,\,\,\forall\,h\in T^{C}(S;x)\} .
\end{equation*}
If  $x\notin S$, by convention $T^{C}(S;x)$ and $N^{C(S;x)}$ are empty.

For a function $f:H\to\mathbb{R} \cup\{+ \infty\}$, The Clarke subdifferential $\partial _{C}f(x)$ of $f$ at $x$ ( see \cite{R.W1}  )  with $f(x)<+\infty$ is defined by
\begin{equation*}
\partial _{C}f(x):=\{ \upsilon\in H\,:\,\,(\upsilon,-1)\in N^{C}(epi\,f ; (x,f(x))) \} {\color{red},}
\end{equation*}
where $H\times \mathbb{R}$ is endowed with the usual product structure and epi $f$ is the epigraph of $f$ , that is,
\begin{equation*}
epi\,f:=\{ (x,r)\in H\times \mathbb{R}\,:\,\,f(x)\leq r \} .
\end{equation*}
If $f$ is not finite at $x$, we see that $\partial_{C}f(x)=\emptyset$. In addition to the latter definition, there is another link
between the Clarke normal cone and the Clarke subdifferential, given by $\partial_{C}\psi_S(x)=N^{C}(S;x)$, where  $\psi_S$ denotes the indicator function of the subset $S$ of $H$, i.e. $\psi(y)=0$ if $y\in S$ and $\psi(y)= + \infty$ if $y\notin S$ .

A vector $v\in H$ is a {\it proximal subgradient} of $f$ at a point $x$ with $f(x)<+\infty$ (see, e.g., \cite{C.L.S.W,Mord,R.W}) if there exist some reals $\sigma \geq 0$ and $\delta >0$ such that
\begin{equation*}
  \langle v,y-x\rangle \leq f(y)-f(x)+\sigma \|y-x\|^2 \quad \text{for all } y \in B(x,\delta).
\end{equation*}
The set $\partial_{P}f(x)$ of all proximal subgradients of $f$ at
$x$ is the {\it proximal subdifferential} of $f$ at $x$. Of course,
$\partial_{P}f(x)=\emptyset$ if $f(x)=+\infty$. Note that one always has $\partial_{P}f(x)\subset \partial_{C}f(x)$ .\\
Taking the proximal subdifferential $\partial_{P}\psi_S(x)$ of the
indicator function $\psi_S$  we obtain the {\it proximal
normal cone} $N_{S}^{P} (x)$ of $S$ at $x$. So, a vector $v\in H$ is a {\it
proximal normal vector} of $S$ at $x\in S$ if and only if there are
reals $\sigma\geq 0$ and $\delta>0$ such that
\begin{equation}\label{eq2.1}
  \langle v,y-x \rangle \leq \sigma\|y-x \|^2 \quad \text{for all } y\in S\cap B(x,\delta).
\end{equation}
So, we always have the inclusion $N^{P}_{S}(x)\subset N^{C}_{S}(x)$ for all $x\in S$ .

   The proximal normal cone can be described in the following
geometrical way (see, e.g., \cite{C.L.S.W})
\begin{equation}\label{eq2.2}
N^{P}_{S}(x)=\{v \in H:\;\exists r>0\text{ such that }x\in
\mathrm{Proj}_{S}(x+r v )\},
\end{equation}
where
\begin{equation*}
\mathrm{Proj}_{S}(u):=\{y\in S:\; d_{S}(u) =\left\Vert
u-y\right\Vert \}.
\end{equation*}
The proximal normal cone is also connected with the distance
function to $S$ through the equalities (see, e.g., \cite{C.L.S.W})
\begin{equation*}
\partial_{P}d_{S}(x)=N_{S}^{P}(x)\cap \mathbb{B}_{H} \text{ and }   N^P_{S}(x)=\mathbb{R}_{+}\partial_{P}d_{S}(x),
\end{equation*}
where $\mathbb{R}_{+} :=[0, +\infty[$.

   In many cases one has to require in (\ref{eq2.2}) that the constant
$r$ be uniform for all the unit proximal normal vectors of $S$.
The sets which satisfy that property are known as (uniformly)
prox-regular sets. Given $r \in ]0,+\infty ]$ the closed subset
$S$ is {\it (uniformly)} $r$-{\it prox-regular} (see
\cite{P.R.T}) (called also $r$-positively reached (see
\cite{Fede}) or $r$-{\it proximally smooth} (see \cite{C.S.W})
provided that, for every $x\in S$ and every unit vector $v\in
N^{P}_{S}(x)$ one has $ x\in \mathrm{Proj}_S (x+ r v).$
The latter is equivalent to
\begin{equation*}
  S \cap B_{H}(x+r v, r)= \emptyset \text{ or equivalently }   \left\langle v ,x'-x \right\rangle
  \leq \frac{1}{2r }\left\Vert x'-x\right\Vert ^{2},
\end{equation*}
for all $x'\in S$. Of course, in the latter inequality, $\frac{1}{r
}=0$ for $r =+\infty$ (as usual). It is worth pointing out that
for $r=+\infty$, the uniform $r$-prox-regularity of the closed
set $S$ amounts to its convexity.
\begin{definition}\label{def}
For a given $ r\in(0,\infty] $, a subset $ S $ of the Hilbert space $H$ is uniformly r-prox-regular, or  $r$-prox-regular for short,
if and only if for all $ x\in S $
 and all  $ 0 \neq\varsigma\in N_{S}^{P}(x) $ one has
\begin{equation*}
\langle \dfrac{\varsigma}{\lVert \varsigma \rVert} , y-x \rangle\leq \dfrac{\lVert y-x \rVert^{2}}{2r},\,\,\,\forall\,y\in S .
\end{equation*}
\end{definition}

  The following propositions summarize some important consequences of prox-regularity needed in the paper. For the
proof of these results, we refer the reader to \cite{P.R.T,ct}.
\begin{proposition}
Let $ S $ be a nonempty closed set in $ H $  which is uniformly r-prox-regular for some $r\in [0,+\infty]$. Then  for any $ x_{i}\in S $, $ \varsigma_{i}\in N^{P}_{S}(x_{i}) $  with $ i=1,2 $ one has :
\begin{equation*}
\langle \varsigma_{2}-\varsigma_{1} , x_{2}-x_{1} \rangle\geq -\dfrac{1}{2}\bigg(\dfrac{\lVert \varsigma_{2} \rVert + \lVert \varsigma_{1} \rVert }{r}\bigg)\lVert x_{2}-x_{1} \rVert^{2} .
\end{equation*}
\end{proposition}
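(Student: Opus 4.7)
The plan is to obtain the desired hypomonotonicity estimate by applying the defining prox-regularity inequality twice---once at the point $x_{1}$ with the test vector $y=x_{2}$, and once at $x_{2}$ with the test vector $y=x_{1}$---and then adding the two resulting inequalities. This is the standard route for this kind of one-sided Lipschitz estimate for normal cones to prox-regular sets, and no further machinery than Definition~\ref{def} is really needed.

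More precisely, I would first dispose of the degenerate case $\varsigma_{1}=0$ or $\varsigma_{2}=0$, where one of the terms simply vanishes and the corresponding prox-regularity inequality is automatic (both sides are zero), so that the claim follows from a single application of Definition~\ref{def} to the remaining nonzero vector. In the generic case $\varsigma_{1}\neq 0$ and $\varsigma_{2}\neq 0$, the uniform $r$-prox-regularity of $S$ applied to $(x_{1},\varsigma_{1})$ with test point $x_{2}\in S$ yields
\begin{equation*}
\Bigl\langle \tfrac{\varsigma_{1}}{\lVert\varsigma_{1}\rVert},\,x_{2}-x_{1}\Bigr\rangle \leq \frac{\lVert x_{2}-x_{1}\rVert^{2}}{2r},
\end{equation*}
and similarly, applied to $(x_{2},\varsigma_{2})$ with test point $x_{1}\in S$,
\begin{equation*}
\Bigl\langle \tfrac{\varsigma_{2}}{\lVert\varsigma_{2}\rVert},\,x_{1}-x_{2}\Bigr\rangle \leq \frac{\lVert x_{1}-x_{2}\rVert^{2}}{2r}.
\end{equation*}

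Multiplying the first inequality by $\lVert\varsigma_{1}\rVert$, the second by $\lVert\varsigma_{2}\rVert$, rewriting the first as a lower bound for $-\langle \varsigma_{1},x_{2}-x_{1}\rangle$ and keeping the second as an upper bound for $\langle \varsigma_{2},x_{1}-x_{2}\rangle = -\langle \varsigma_{2},x_{2}-x_{1}\rangle$, then adding them, delivers precisely
\begin{equation*}
\langle \varsigma_{2}-\varsigma_{1},\,x_{2}-x_{1}\rangle \geq -\frac{1}{2}\!\left(\frac{\lVert\varsigma_{2}\rVert+\lVert\varsigma_{1}\rVert}{r}\right)\lVert x_{2}-x_{1}\rVert^{2},
\end{equation*}
which is the announced inequality. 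There is essentially no technical obstacle: the only mild point to watch is the algebraic bookkeeping when reinserting the factors $\lVert\varsigma_{i}\rVert$ and the sign changes after swapping $x_{1}-x_{2}$ with $-(x_{2}-x_{1})$; everything else is a direct consequence of the definition of $r$-prox-regularity and requires no compactness, smoothness, or approximation argument.
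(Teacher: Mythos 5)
Your proof is correct: the symmetric double application of the prox-regularity inequality of Definition~\ref{def} (at $(x_1,\varsigma_1)$ with test point $x_2$ and at $(x_2,\varsigma_2)$ with test point $x_1$), scaled by $\lVert\varsigma_1\rVert$ and $\lVert\varsigma_2\rVert$ and added, with the trivial handling of the case $\varsigma_i=0$, gives exactly the stated hypomonotonicity estimate. The paper itself does not prove this proposition but refers to the literature, where the argument is precisely the one you give, so your route coincides with the standard one.
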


\begin{proposition}\label{proj}
Let $ S $ be a nonempty closed subset in $ H $ and let $ r\in(0,\infty] $.
If the subset $ S $ is uniformly r-prox-regular then the following hold:
\begin{enumerate}
	\item[(a)]  The proximal and Clarke normal cones of $S$ coincide. \\
	\item[(b)] for all $ x\in H $ with $ d_{S}(x)< r $, $ \mathrm{Proj}_{S}(x) $ is nonempty and is a singleton set .\\
	\item[(c)] the Clarke and the proximal subdifferentials of $ d_{S} $ coincide at all points $ x\in H $ with
	$ d_{S}(x)< r $ .	
\end{enumerate}
\end{proposition}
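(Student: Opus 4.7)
The plan is to prove (b) first, since the single-valued projection it yields drives the arguments for (a) and (c). For uniqueness in (b), if $y_1, y_2 \in \mathrm{Proj}_S(x)$ with $d_S(x) < r$, then by \eqref{eq2.2} each $x - y_i$ is a proximal normal to $S$ at $y_i$ with norm $d_S(x)$; applying Definition \ref{def} to the unit normals $(x - y_i)/d_S(x)$ tested against the point $y_{3-i} \in S$ and summing the two resulting inequalities yields
\begin{equation*}
\|y_2 - y_1\|^2 = \langle (x - y_1) - (x - y_2),\, y_2 - y_1 \rangle \leq \frac{d_S(x)}{r}\|y_2 - y_1\|^2,
\end{equation*}
forcing $y_1 = y_2$ since $d_S(x)/r < 1$. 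Existence is obtained by showing that a minimizing sequence for $\min_{y \in S}\|x - y\|$ is Cauchy: via Ekeland's variational principle applied to the squared distance restricted to $S$, I would construct almost-minimizers that are exact projectors of slightly perturbed base points, hence admit genuine proximal normals to which Definition \ref{def} applies. The resulting quantitative estimate propagates from these "true" projectors to any minimizing sequence, and closedness of $S$ yields a point $p_S(x) \in S$ with $\|x - p_S(x)\| = d_S(x)$.

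For (a), $N_S^P(x) \subset N_S^C(x)$ holds in general. Conversely, any Clarke normal $v \in N_S^C(x)$ is a limit (in the appropriate convex-hull sense) of proximal normals $v_n \in N_S^P(x_n)$ with $x_n \to x$; passing to the limit in the prox-regularity inequality of Definition \ref{def} satisfied by each $v_n/\|v_n\|$ shows that $v$ itself satisfies \eqref{eq2.1} at $x$ and therefore lies in $N_S^P(x)$, convexity of this cone (a consequence of the uniform hypomonotonicity in the preceding proposition) completing the equality. Part (c) then follows by combining (a) and (b): on the open tube $U := \{x \in H : 0 < d_S(x) < r\}$, a direct computation from the definition of the proximal subdifferential gives $\partial_P d_S(x) = \{(x - p_S(x))/d_S(x)\}$, and continuity of $p_S$ on $U$ (inherited from the Cauchy estimate in (b)) makes $d_S$ continuously differentiable there, so $\partial_C d_S(x) = \{\nabla d_S(x)\} = \partial_P d_S(x)$. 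At points $x \in S$, the equality reduces via the known identities $\partial_P d_S(x) = N_S^P(x) \cap \mathbb{B}_H$ and $\partial_C d_S(x) = N_S^C(x) \cap \mathbb{B}_H$ to the equality in (a).

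The delicate step is the existence half of (b): since $S$ is only prox-regular, it is not weakly closed, so the standard Hilbert-space projection argument via weak subsequential limits does not apply. One must produce approximate projectors whose associated vectors $x - y_n$ are genuine proximal normals (up to a small Ekeland error) and quantify the contribution of this error in the prox-regularity inequality. The condition $d_S(x) < r$ is essentially used here: it supplies the strict contraction factor $d_S(x)/r < 1$ that drives the Cauchy estimate and persists in the limit.
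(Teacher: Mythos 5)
The paper does not prove this proposition at all: it is quoted as a summary of known consequences of prox-regularity, with the proofs delegated to the references \cite{P.R.T,ct}. Your sketch follows essentially the same route as those references (uniqueness of the projection from the hypomonotonicity inequality of Definition \ref{def}, existence via a variational principle plus the quantitative contraction coming from $d_S(x)/r<1$, part (a) by passing to the limit in the proximal-normal inequality over the limiting/convexified normals, and part (c) via $C^1$-smoothness of $d_S$ on the open tube), so in that sense there is nothing to compare against inside the paper, and your overall strategy is the correct and standard one.

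Two soft spots are worth flagging. First, the existence half of (b) is only a sketch: the phrase ``the resulting quantitative estimate propagates from these `true' projectors to any minimizing sequence'' hides the actual work. The standard way to complete it is to use the density (via Ekeland or Lau's theorem) of points admitting a nearest point, and then the prox-regularity inequality applied to the proximal normals $u_i-p_S(u_i)$ to show that projections of points $u_i\to x$ with $\sup_i d_S(u_i)<r$ form a Cauchy family, with a Lipschitz-type bound of order $\bigl(1-\tfrac{d_S(x)}{r}\bigr)^{-1}\|u_1-u_2\|$; the same estimate then also gives the continuity (indeed local Lipschitz continuity) of $p_S$ that you invoke in (c). Second, in (c) at points $x\in S$ you appeal to the ``known identity'' $\partial_{C}d_{S}(x)=N^{C}_{S}(x)\cap\mathbb{B}_H$; this equality is not valid for general closed sets (only the inclusion $\partial_{C}d_{S}(x)\subset N^{C}_{S}(x)\cap\mathbb{B}_H$ is), and the equality in the prox-regular case is part of what is being proved. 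Fortunately only the inclusion is needed: together with (a) and the general identity $\partial_{P}d_{S}(x)=N^{P}_{S}(x)\cap\mathbb{B}_H$ recalled in the paper, it gives $\partial_{C}d_{S}(x)\subset N^{P}_{S}(x)\cap\mathbb{B}_H=\partial_{P}d_{S}(x)\subset\partial_{C}d_{S}(x)$, which is the desired equality. Also note that in (c) off $S$, continuity of $p_S$ alone does not formally yield Fr\'echet differentiability of $d_S$; you need the quantitative (Lipschitz) estimate on $p_S$ from (b) in the lower estimate $d_S(y)\geq d_S(x)+\langle u,y-x\rangle-\tfrac{1}{2r}\|p_S(y)-p_S(x)\|^{2}$ with $u=(x-p_S(x))/d_S(x)$, which your framework does provide.
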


   The assertion (a) in Proposition \ref{proj} leads us to put  $
N_{S}(x):=N_{S}^{C}(x)=N_{S}^{P}(x) $ whenever the set $S$ is $r$-prox-regular.

\begin{proposition}\label{dist}
Let $ S $ be a nonempty closed subset in $ H $ and let $ r\in(0,\infty] $.
If the subset $ S $ is uniformly r-prox-regular then the following hold:
\begin{enumerate}
	\item[(a)]   For any $ x\in S $ and any $ \varsigma\in\partial^{P}d_{S}(x) $ one has  for any $ y\in H $ such that $ d_{S}(y)< r $
	\begin{equation*}
\langle \varsigma , y-x \rangle	\leq \dfrac{2}{r}\lVert y-x \rVert^{2} + d_{S}(y) .
	\end{equation*}
	\item[(b)] For any $ x\in H $ with 	$ d_{S}(x)< r $, the proximal subdifferential $ \partial^{P}d_{S}(x) $ is a
	nonempty closed convex subset in $ H $.		
\end{enumerate}	
\end{proposition}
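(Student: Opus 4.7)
The plan for part (a) is to combine the preliminary identity $\partial^{P}d_{S}(x)=N_{S}^{P}(x)\cap \mathbb{B}_{H}$ (valid here since $x\in S$, so $d_{S}(x)=0$) with the prox-regularity inequality of Definition \ref{def}, applied at a projection of $y$. Fix $\varsigma\in\partial^{P}d_{S}(x)$; then $\|\varsigma\|\leq 1$ and $\varsigma\in N_{S}^{P}(x)$, and the case $\varsigma=0$ is trivial because the right-hand side is nonnegative. For $\varsigma\neq 0$, since $d_{S}(y)<r$ I invoke Proposition \ref{proj}(b) to obtain the unique projection $z:=\mathrm{Proj}_{S}(y)$. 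I then decompose
$$
\langle \varsigma,y-x\rangle=\langle \varsigma,y-z\rangle+\langle \varsigma,z-x\rangle,
$$
bound the first summand by Cauchy--Schwarz as $\|\varsigma\|\,\|y-z\|\leq d_{S}(y)$, and bound the second by Definition \ref{def} applied to $z\in S$:
$$
\langle \varsigma,z-x\rangle\leq \|\varsigma\|\,\frac{\|z-x\|^{2}}{2r}\leq \frac{\|z-x\|^{2}}{2r}.
$$

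The key step, which I expect to be the main subtlety, is the estimate $\|z-x\|\leq 2\|y-x\|$. This follows from the triangle inequality $\|z-x\|\leq \|z-y\|+\|y-x\|=d_{S}(y)+\|y-x\|$, combined with the observation that $x\in S$ forces $d_{S}(y)\leq \|y-x\|$. Substituting yields $\frac{\|z-x\|^{2}}{2r}\leq \frac{2}{r}\|y-x\|^{2}$, which added to the bound $d_S(y)$ on the first summand produces exactly the target inequality. Without the hypothesis $x\in S$ the estimate $d_{S}(y)\leq \|y-x\|$ would fail, and the constant $2/r$ could not be recovered cleanly from this argument.

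For part (b), the plan is to split into two cases. If $x\in S$, then $d_{S}(x)=0$, so the preliminary identity gives $\partial^{P}d_{S}(x)=N_{S}^{P}(x)\cap \mathbb{B}_{H}$. This set is nonempty (it contains the origin), closed as the intersection of two closed sets, and convex, since by Proposition \ref{proj}(a), $N_{S}^{P}(x)$ coincides with the always convex Clarke normal cone $N_{S}^{C}(x)$. If instead $x\notin S$ but $d_{S}(x)<r$, Proposition \ref{proj}(c) identifies $\partial^{P}d_{S}(x)$ with the Clarke subdifferential $\partial_{C}d_{S}(x)$, and the latter is nonempty, convex, and weakly compact (hence closed) because $d_{S}$ is $1$-Lipschitz on $H$.
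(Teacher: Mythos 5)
Your argument is correct, but note that the paper itself gives no proof of Proposition \ref{dist}: it is listed among the ``consequences of prox-regularity'' quoted from the references \cite{P.R.T,ct}. So the comparison is with the standard proofs in that literature, and your route is essentially the classical one, carried out self-containedly from facts the paper already records: for (a) you use $\partial_{P}d_{S}(x)=N^{P}_{S}(x)\cap\mathbb{B}_{H}$ at $x\in S$, project $y$ onto $S$ (legitimate since $d_{S}(y)<r$, by Proposition \ref{proj}(b); only existence of the projection is actually needed), split $\langle \varsigma,y-x\rangle$ through the projection point $z$, bound $\langle\varsigma,y-z\rangle\leq d_S(y)$ by Cauchy--Schwarz, bound $\langle\varsigma,z-x\rangle$ by Definition \ref{def}, and recover the constant $2/r$ from $\|z-x\|\leq d_S(y)+\|y-x\|\leq 2\|y-x\|$, which indeed uses $x\in S$; the chain also works in the degenerate cases $\varsigma=0$, $y\in S$, and $r=+\infty$. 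For (b), your case split is fine; one small presentational point is that for $x\in S$ the closedness of $N^{P}_{S}(x)\cap\mathbb{B}_H$ is not automatic (a proximal normal cone need not be closed in general) but follows from the same identification $N^{P}_{S}(x)=N^{C}_{S}(x)$ of Proposition \ref{proj}(a) that you invoke for convexity, so no gap; and for $x\notin S$ the appeal to Proposition \ref{proj}(c) together with the nonemptiness, convexity and weak compactness of the Clarke subdifferential of the $1$-Lipschitz function $d_S$ is the standard argument. What your write-up buys over the paper's treatment is precisely this self-containedness: everything is derived from Definition \ref{def}, Proposition \ref{proj}, and the stated identity relating $\partial_{P}d_{S}$ and $N^{P}_{S}$, with the only imported fact being the classical properties of the Clarke subdifferential of a Lipschitz function.
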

\section{Gronwall-like differential inequality}\label{s3}\ \\

We start this section with the following continuous Gronwall's inequality \cite{Sh}.

\begin{lemma}[Gronwall's inequality]\label{m}
	Let  $ T>T_0 $ be given reals and  $ a(\cdot) , b(\cdot) \in L^{1}([T_0,T];\mathbb{R}) $ with  $ b(t) \geq 0  $ for almost all $ t\in [T_0,T] $.  Let the absolutely continuous function $ w : [T_0,T] \longrightarrow \mathbb{R}_{+} $ satisfy
	\begin{equation*}
	(1-\alpha)w^\prime(t)\leq a(t)w(t) + b(t)w^{\alpha}(t) , \hspace{0.3cm} a.e.\,\, t\in [T_0,T],
	\end{equation*}
	where $ 0\leq \alpha < 1 $. Then for all $ t\in [T_0,T] $, one has
	\begin{equation*}
	w^{1-\alpha}(t)\leq w^{1-\alpha}(T_0)\exp(\int_{T_0}^{t} a(\tau)d\tau) + \int_{T_0}^{t}\exp(\int_{s}^{t} a(\tau)d\tau)b(s)ds .
	\end{equation*}
\end{lemma}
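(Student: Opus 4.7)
The plan is to reduce the Bernoulli-type inequality to a linear one via the substitution $v=w^{1-\alpha}$, and then apply the classical integrating-factor argument with $\varphi(t):=\exp\bigl(-\int_{T_0}^{t}a(\tau)\,d\tau\bigr)$. Formally, multiplying the hypothesis by $w^{-\alpha}$ yields $(w^{1-\alpha})'\le a\,w^{1-\alpha}+b$, after which one integrates $\tfrac{d}{dt}\bigl(\varphi\,w^{1-\alpha}\bigr)\le \varphi\,b$. The subtlety is that $w$ may vanish, so that $w^{-\alpha}$ is singular and $w^{1-\alpha}$ need not even be absolutely continuous a priori; I would therefore regularize by $v_\epsilon(t):=(w(t)+\epsilon)^{1-\alpha}$ with $\epsilon>0$ and pass to the limit $\epsilon\downarrow 0$ at the end.

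Concretely, since $s\mapsto s^{1-\alpha}$ is of class $C^1$ on $[\epsilon,+\infty)$, the function $v_\epsilon$ is absolutely continuous with $v_\epsilon'(t)=(1-\alpha)(w(t)+\epsilon)^{-\alpha}w'(t)$ a.e. Multiplying the assumed inequality by the positive factor $(w+\epsilon)^{-\alpha}$ and invoking the decomposition $w(w+\epsilon)^{-\alpha}=v_\epsilon-\epsilon(w+\epsilon)^{-\alpha}$, together with the elementary bounds $w^{\alpha}(w+\epsilon)^{-\alpha}\le 1$ and $\epsilon(w+\epsilon)^{-\alpha}\le \epsilon^{1-\alpha}$ (both following from $w\le w+\epsilon$), I would arrive at the linear differential inequality
\[
v_\epsilon'(t)\;\le\;a(t)\,v_\epsilon(t)+b(t)+|a(t)|\,\epsilon^{1-\alpha}\qquad\text{a.e. on } [T_0,T].
\]

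I would then multiply by $\varphi$, using $(\varphi v_\epsilon)'=\varphi v_\epsilon'-a\varphi v_\epsilon$, to recast the above as $\tfrac{d}{dt}\!\bigl[\varphi\,v_\epsilon\bigr](t)\le \varphi(t)b(t)+\epsilon^{1-\alpha}\varphi(t)|a(t)|$. Integrating over $[T_0,t]$, using $\varphi(T_0)=1$, and dividing by $\varphi(t)$ delivers
\[
v_\epsilon(t)\;\le\; v_\epsilon(T_0)\,e^{A(t)}+\int_{T_0}^{t}e^{A(t)-A(s)}b(s)\,ds+\epsilon^{1-\alpha}\,e^{A(t)}\!\int_{T_0}^{t}\varphi(s)|a(s)|\,ds,
\]
with $A(t):=\int_{T_0}^{t}a(\tau)\,d\tau$. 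Letting $\epsilon\downarrow 0$ and using the pointwise convergence $v_\epsilon(t)\to w^{1-\alpha}(t)$, together with $a\in L^{1}$ and the boundedness of $\varphi$ on $[T_0,T]$, makes the last term vanish and produces the claimed inequality.

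The main obstacle is the combination of two difficulties: the singularity of $w^{-\alpha}$ at zeros of $w$, and the absence of any sign assumption on $a$ (which prevents simply dropping the $a\,v_\epsilon$ term as one would under $a\ge 0$). Both are absorbed by the regularization; in particular, the crucial bound $\epsilon(w+\epsilon)^{-\alpha}\le \epsilon^{1-\alpha}$ provides an $\epsilon$-independent, integrable control of the extra term produced by the identity $w(w+\epsilon)^{-\alpha}=v_\epsilon-\epsilon(w+\epsilon)^{-\alpha}$, so that this error contribution disappears in the limit $\epsilon\downarrow 0$.
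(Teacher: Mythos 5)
Your argument is correct. Note, however, that the paper does not prove this lemma at all: it is quoted verbatim from Showalter's book (reference [Sh]) and used as a known tool, so there is no in-paper proof to compare against. Your proposal is a legitimate self-contained proof along the standard textbook line (substitute $v=w^{1-\alpha}$ and run the integrating-factor argument), with the genuine added value being the $\epsilon$-regularization $v_\epsilon=(w+\epsilon)^{1-\alpha}$: this makes the chain rule licit despite possible zeros of $w$ (where $w^{1-\alpha}$ need not be absolutely continuous in an obvious way and $w^{-\alpha}$ is singular), and the decomposition $w(w+\epsilon)^{-\alpha}=v_\epsilon-\epsilon(w+\epsilon)^{-\alpha}$ together with $b\,w^{\alpha}(w+\epsilon)^{-\alpha}\le b$ (using $b\ge 0$) and $\epsilon(w+\epsilon)^{-\alpha}\le\epsilon^{1-\alpha}$ yields the linear inequality $v_\epsilon'\le a\,v_\epsilon+b+|a|\,\epsilon^{1-\alpha}$, whose error term is killed in the limit because $1-\alpha>0$ and $a\in L^1$. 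Two cosmetic remarks: the bound $\epsilon(w+\epsilon)^{-\alpha}\le\epsilon^{1-\alpha}$ follows from $w\ge 0$ (i.e.\ $w+\epsilon\ge\epsilon$) rather than from $w\le w+\epsilon$ as you state; and you should say explicitly that $\varphi v_\epsilon$ is absolutely continuous (product of bounded AC functions) so that integrating its a.e.\ derivative over $[T_0,t]$ is justified. With those touch-ups the proof is complete and arguably more careful than the usual textbook presentation.
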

We will need the following lemma which is a straightforward consequence of Gronwall's
lemma.
\begin{lemma}\label{22}
	Let $ \rho : [T_{0},T]\longrightarrow \mathbb{R} $  be a nonnegative absolutely continuous function and let $ b_{1}, b_{2}, a : [T_{0},T]\longrightarrow \mathbb{R}_{+} $ be non-negative Lebesgue integrable functions. Assume that
	\begin{equation}\label{23}
	\dot{\rho}(t)\leq a(t)+b_{1}(t)\rho(t)+b_{2}(t)\int\limits_{T_{0}}^{t}\rho(s)\,ds, \hspace{0.3cm} a.e.\,\, t\in [T_{0},T].
	\end{equation}	
	Then for all $ t\in [T_{0},T] $, one has
	\begin{align*}
	 \rho(t)\leq  \rho(T_{0})\,\exp\bigg(\int\limits_{T_{0}}^{t}(b(\tau)+1)\,d\tau\bigg) + \int\limits_{T_{0}}^{t}a(s)\,\exp\bigg(\int\limits_{s}^{t}(b(\tau)+1)\,d\tau\bigg)\,ds,
	\end{align*}	
	where $ b(t):=\max\{b_{1}(t),b_{2}(t)\} $, a.e. $ t\in[T_{0},T] $  .
\end{lemma}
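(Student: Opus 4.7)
The plan is to reduce the integro-differential inequality to an ordinary differential inequality in a suitable auxiliary function, and then apply the classical Gronwall inequality (Lemma \ref{m}) with $\alpha = 0$.

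First I would introduce the auxiliary function $R(t) := \int_{T_0}^{t} \rho(s)\,ds$, which is absolutely continuous with $\dot{R}(t) = \rho(t)$ a.e., and $R(T_0) = 0$. Since $\rho \geq 0$, both $R$ and $\rho$ are nonnegative. Using $b(t) = \max\{b_1(t), b_2(t)\}$, the hypothesis \eqref{23} yields
\begin{equation*}
\dot{\rho}(t) \leq a(t) + b(t)\,\rho(t) + b(t)\, R(t), \quad \text{a.e. } t \in [T_0,T].
\end{equation*}

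Next I would set $w(t) := \rho(t) + R(t)$, which is absolutely continuous and nonnegative. Differentiating gives $\dot{w}(t) = \dot{\rho}(t) + \rho(t)$ a.e., and combining with the previous estimate, I obtain
\begin{equation*}
\dot{w}(t) \leq a(t) + \bigl(b(t)+1\bigr)\rho(t) + b(t)\, R(t) \leq a(t) + \bigl(b(t)+1\bigr)\, w(t), \quad \text{a.e. } t \in [T_0,T],
\end{equation*}
where I used the fact that $b(t) \leq b(t)+1$ to bound the coefficient of $R(t)$.

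Now I apply Lemma \ref{m} with $\alpha = 0$, the integrable coefficient $\tilde{a}(\tau) := b(\tau)+1$, and forcing term $a$, to the function $w$. Noting $w(T_0) = \rho(T_0)$ since $R(T_0)=0$, this gives
\begin{equation*}
w(t) \leq \rho(T_0)\exp\!\Bigl(\int_{T_0}^{t}(b(\tau)+1)\,d\tau\Bigr) + \int_{T_0}^{t} a(s)\exp\!\Bigl(\int_{s}^{t}(b(\tau)+1)\,d\tau\Bigr) ds.
\end{equation*}
The conclusion then follows at once from the pointwise bound $\rho(t) \leq \rho(t) + R(t) = w(t)$. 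The only mildly subtle step is the choice of the auxiliary function $w$ that simultaneously absorbs the pointwise term $b_1(t)\rho(t)$ and the integral term $b_2(t)R(t)$ into a single linear estimate; once this reduction is made, the rest is a direct application of the classical Gronwall lemma, so I do not anticipate any real obstacle.
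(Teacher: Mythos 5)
Your proof is correct and follows essentially the same route as the paper: the paper sets $z(t)=\int_{T_0}^t\rho(s)\,ds$ and works with $w=\dot z+z$, which is exactly your $w=\rho+R$, and then both arguments derive $\dot w\leq a+(b+1)w$ and apply Lemma \ref{m} with $\alpha=0$, concluding via $\rho\leq w$. No gaps.
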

\begin{proof}
Put $ b(t)=\max\{b_{1}(t),b_{2}(t)\} $, a.e. $ t\in[T_{0},T] $. Setting $z(t)=\int\limits_{T_{0}}^{t}\rho(s)\,ds\Rightarrow\dot{z}(t)=\rho(t)$, $\ddot{z}(t)=\dot{\rho}(t)$ . Then from \eqref{23}
we see that
\begin{align*}
\ddot{z}(t)&\leq a(t)+b_{1}(t)\dot{z}(t)+b_{2}(t)z(t)\leq  a(t)+b(t)w( t),
\end{align*}
where  $w(t)=\dot{z}(t)+z(t)$,  for all $ t\in[T_{0},T] $. Hence, for a.e. $ t\in[T_{0},T] $
$$
   \dot{w}(t)=\ddot{z}(t)+\dot{z}(t) \quad\text{and}\quad
   \dot{w}(t)\leq a(t)+(b(t)+1)w(t) .
$$
Applying the Gronwall Lemma \ref{m} with $ w$, one obtains for all $ t\in [T_{0},T] $
	\begin{equation*}
	w(t) \leq w(T_{0})\exp\big(\int\limits_{T_{0}}^{t}(b(\tau)+1)\,d\tau\big)
	+\int\limits_{T_{0}}^{t}a(s)\,\exp\big(\int\limits_{s}^{t}(b(\tau)+1)\,d\tau\big)\,ds,
	\end{equation*}
which gives
\begin{align*}
	 \rho(t)\leq\dot{z}(t)+z(t)=w(t)\leq  \rho(T_{0})\,\exp\bigg(\int\limits_{T_{0}}^{t}(b(\tau)+1)\,d\tau\bigg) + \int\limits_{T_{0}}^{t}a(s)\,\exp\bigg(\int\limits_{s}^{t}(b(\tau)+1)\,d\tau\bigg)\,ds .
	\end{align*}	
\end{proof}

We establish now the following new Gronwall-like lemma.

\begin{lemma}[Gronwall-like differential inequality]\label{8}
	Let $ \rho : [T_{0},T]\longrightarrow \mathbb{R} $  be a non-negative absolutely continuous function  and let $ K_{1},K_{2},\varepsilon : [T_{0},T]\longrightarrow \mathbb{R}_{+} $ be non-negative Lebesgue integrable functions. Suppose for some $ \epsilon> 0 $
	\begin{equation}\label{5}
	\dot{\rho}(t)\leq \varepsilon(t)+\epsilon+ K_{1}(t)\rho(t)+K_{2}(t)\sqrt{\rho(t)}\int\limits_{T_{0}}^{t}\sqrt{\rho(s)}\,ds, \hspace{0.3cm} a.e.\,\, t\in [T_{0},T].
	\end{equation}	
	Then for all $ t\in[T_{0},T] $, one has
	\begin{align*}
	 \sqrt{\rho(t)}&\leq  \sqrt{\rho(T_{0})+\epsilon}\,\exp\bigg(\int\limits_{T_{0}}^{t}(K(s)+1)\,ds\bigg) + \dfrac{\sqrt{\epsilon}}{2}\int\limits_{T_{0}}^{t}\exp\bigg(\int\limits_{s}^{t}(K(\tau)+1)\,d\tau\bigg)\,ds  \\
	&+2\bigg(\sqrt{\int\limits_{T_{0}}^{t}\varepsilon(s)\,ds+\epsilon}
	-\sqrt{\epsilon}\,\exp\bigg(\int\limits_{T_{0}}^{t}(K(\tau)+1)\,d\tau\bigg)\bigg)\\
	& + 2\int\limits_{T_{0}}^{t}(K(s)+1)\exp\bigg(\int\limits_{s}^{t}(K(\tau)+1)\,d\tau\bigg)
	\sqrt{\int\limits_{T_{0}}^{s}\varepsilon(\tau)\,d{\tau}+\epsilon}\,\,ds,
	\end{align*}	
	where $ K(t):=\max\bigg\{\dfrac{K_{1}(t)}{2},\dfrac{K_{2}(t)}{2}\bigg\} $, a.e. $ t\in[T_{0},T] $  .
\end{lemma}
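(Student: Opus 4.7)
My strategy is to linearize the nonlinear term by passing to the auxiliary function
\[
v(t):=\sqrt{\rho(t)+\int_{T_{0}}^{t}\varepsilon(s)\,ds+\epsilon}.
\]
The radicand is absolutely continuous on $[T_{0},T]$ and bounded below by $\epsilon>0$, so $v$ is itself absolutely continuous with $v(t)\ge\sqrt{\epsilon}>0$; in particular, dividing by $v$ is harmless. Differentiating $v^{2}$ gives $2v(t)\dot v(t)=\dot\rho(t)+\varepsilon(t)$. Substituting this into \eqref{5} and using the elementary bounds $\rho(t)\le v(t)^{2}$ and $\sqrt{\rho(t)}\le v(t)$ yields
\[
2v(t)\dot v(t)\le 2\varepsilon(t)+\epsilon+K_{1}(t)\,v(t)^{2}+K_{2}(t)\,v(t)\int_{T_{0}}^{t}v(s)\,ds,\quad\text{a.e. } t\in[T_{0},T].
\]

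Introducing the companion $V(t):=\sqrt{\int_{T_{0}}^{t}\varepsilon(s)\,ds+\epsilon}$, one has $2V(t)\dot V(t)=\varepsilon(t)$ and $v(t)\ge V(t)$, hence $\varepsilon(t)/v(t)\le \varepsilon(t)/V(t)=2\dot V(t)$. Combined with $\epsilon/(2v(t))\le \sqrt{\epsilon}/2$, division of the previous display by $2v(t)$ produces
\[
\dot v(t)\le 2\dot V(t)+\frac{\sqrt{\epsilon}}{2}+\frac{K_{1}(t)}{2}v(t)+\frac{K_{2}(t)}{2}\int_{T_{0}}^{t}v(s)\,ds.
\]
This is exactly of the form covered by the preceding Lemma~\ref{22}, applied to $v$ with $a(t):=2\dot V(t)+\sqrt{\epsilon}/2$ (nonnegative and integrable, as $V$ is absolutely continuous on $[T_{0},T]$) and $b_{1}:=K_{1}/2$, $b_{2}:=K_{2}/2$, so $\max\{b_{1},b_{2}\}=K$. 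That lemma delivers
\[
v(t)\le v(T_{0})\,e^{\int_{T_{0}}^{t}(K(\tau)+1)\,d\tau}+\int_{T_{0}}^{t}\!\Bigl(2\dot V(s)+\frac{\sqrt{\epsilon}}{2}\Bigr)e^{\int_{s}^{t}(K(\tau)+1)\,d\tau}\,ds.
\]

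The only remaining task is to rewrite the $\dot V$ contribution in the form appearing in the statement. Since $\frac{d}{ds}e^{\int_{s}^{t}(K(\tau)+1)\,d\tau}=-(K(s)+1)e^{\int_{s}^{t}(K(\tau)+1)\,d\tau}$, integration by parts in $2\int_{T_{0}}^{t}\dot V(s)\,e^{\int_{s}^{t}(K(\tau)+1)\,d\tau}\,ds$, combined with the boundary values $V(T_{0})=\sqrt{\epsilon}$ and $V(t)=\sqrt{\int_{T_{0}}^{t}\varepsilon(s)\,ds+\epsilon}$, reproduces exactly the third and fourth summands of the target inequality. Using $\sqrt{\rho(t)}\le v(t)$ and $v(T_{0})=\sqrt{\rho(T_{0})+\epsilon}$ then yields the announced bound. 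The delicate point, and the reason for the somewhat intricate shape of the conclusion, is the choice of regularization inside the square root: shifting $\rho$ by $\int_{T_{0}}^{\cdot}\varepsilon(s)\,ds$ in addition to $\epsilon$ is precisely what makes $\varepsilon(t)/v(t)$ absorbable into $2\dot V(t)$, rather than producing the unbounded factor $\varepsilon(t)/\sqrt{\epsilon}$ that a naive regularization would leave behind.
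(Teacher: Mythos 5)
Your proposal is correct and follows essentially the same route as the paper: your $v$ and $V$ are exactly the paper's $z_{\varepsilon}$ and $\lambda$, and the subsequent division by $2v$, the bounds $\varepsilon/v\le 2\dot V$ and $\epsilon/(2v)\le\sqrt{\epsilon}/2$, the appeal to Lemma~\ref{22}, and the final integration by parts all coincide with the paper's argument.
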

\begin{proof}
	Set $ \lambda(t)=\sqrt{\int\limits_{T_{0}}^{t}\varepsilon(s)\,ds+\epsilon} $ and $ z_{\varepsilon}(t)=\sqrt{\rho(t)+\lambda^{2}(t)}$ for all $ t\in[T_{0},T] $ .\\
	From \eqref{5} we have for a.e. $ t\in[T_{0},T] $
	\begin{equation}\label{6}
	\dot{\rho}(t)\leq \varepsilon(t)+\epsilon+ K_{1}(t)(\rho(t)+\lambda^{2}(t))+K_{2}(t)\sqrt{\rho(t)+\lambda^{2}(t)}\int\limits_{T_{0}}^{t}\sqrt{\rho(s)+\lambda^{2}(s)}\,ds
	\end{equation}
and
$$
  \dot{z}_{\varepsilon}(t)=\dfrac{\dot{\rho}(t)+2\dot{\lambda}(t)\lambda(t)}{2\sqrt{\rho(t)+\lambda^{2}(t)}}=\dfrac{\dot{\rho}(t)+\varepsilon(t)}{2z_{\varepsilon}(t)},
\quad\text{or equivalently}\;
	\dot{\rho}(t)=2z_{\varepsilon}(t)\dot{z}_{\varepsilon}(t)-\varepsilon(t),
$$
hence from \eqref{6}
	\begin{equation*}
	2z_{\varepsilon}(t)\dot{z}_{\varepsilon}(t)\leq 2\varepsilon(t)+\epsilon+K_{1}(t)z_{\varepsilon}(t)^{2}+K_{2}(t)z_{\varepsilon}(t)\int\limits_{T_{0}}^{t}z_{\varepsilon}(s)\,ds.
	\end{equation*}
	Therefore,
	\begin{align*}
	\dot{z}_{\varepsilon}(t)  \leq \dfrac{\varepsilon(t)}{z_{\varepsilon}(t)}+\dfrac{\epsilon}{2z_{\varepsilon}(t)}+\dfrac{K_{1}(t)}{2}z_{\varepsilon}(t)+\dfrac{K_{2}(t)}{2}\int\limits_{T_{0}}^{t}z_{\varepsilon}(s)\,ds \leq 2\dot{\lambda}(t)+\dfrac{\sqrt{\epsilon}}{2}+\dfrac{1}{2}\left(K_{1}(t)z_{\varepsilon}(t) +K_{2}(t)\int\limits_{T_{0}}^{t}z_{\varepsilon}(s)\,ds\right).
	\end{align*}
	Since
	\begin{gather*}
	\lambda(t)=\sqrt{\int\limits_{T_{0}}^{t}\varepsilon(s)\,ds+\epsilon}\leq \sqrt{\rho(t)+\int\limits_{T_{0}}^{t}\varepsilon(s)\,ds+\epsilon}=\sqrt{\rho(t)+\lambda^{2}(t)}=z_{\varepsilon}(t),
	\end{gather*}
	then
	\begin{align*}
	\dfrac{1}{z_{\varepsilon}(t)}&\leq \dfrac{1}{\lambda(t)}, \quad \text{or equivalently}\;
	\dfrac{\varepsilon(t)}{z_{\varepsilon}(t)}\leq\dfrac{\varepsilon(t)}{\lambda(t)}.
	\end{align*}
	Also we have  $ \dot{\lambda}(t)=\dfrac{\varepsilon(t)}{2\lambda(t)} $. Then $\dfrac{\varepsilon(t)}{z_{\varepsilon}(t)}\leq 2\dot{\lambda}(t)$, and
	 $\sqrt{\epsilon}\leq \sqrt{\epsilon+\int\limits_{T_{0}}^{t}\varepsilon(s)\,ds}=\lambda(t)\leq z_{\varepsilon}(t)$, hence $\dfrac{\epsilon}{2z_{\varepsilon}(t)}\leq\dfrac{\sqrt{\epsilon}}{2}$ . Letting  $ K(t):=\max\bigg\{\dfrac{K_{1}(t)}{2},\dfrac{K_{2}(t)}{2}\bigg\} $ and applying
the	Gronwall Lemma \ref{22} with $ z_{\varepsilon} $, one obtains for all $ t\in [T_{0} , T ] $
	\begin{align*}
	z_{\varepsilon}(t)&\leq z_{\varepsilon}(T_{0})\exp\big(\int\limits_{T_{0}}^{t}(K(\tau)+1)\,d\tau\big)+\int\limits_{T_{0}}^{t}\exp\big(\int\limits_{s}^{t}(K(\tau)+1)\,d\tau\big)(2\dot{\lambda}(s)+\dfrac{\sqrt{\epsilon}}{2})\,ds\\
	 &=\sqrt{\rho(T_{0})+\epsilon}\exp(\int\limits_{T_{0}}^{t}(K(s)+1)\,ds)+\int\limits_{T_{0}}^{t}\exp\big(\int\limits_{s}^{t}(K(\tau)+1)\,d\tau\big)(2\dot{\lambda}(s)+\dfrac{\sqrt{\epsilon}}{2} )\,ds
\end{align*}
or equivalently
\begin{align*}
 z_{\varepsilon}(t)
	 \leq \sqrt{\rho(T_{0})+\epsilon}\exp\big(\int\limits_{T_{0}}^{t}(K(s)+1)\,ds\big)+2\int\limits_{T_{0}}^{t}\exp\big(\int\limits_{s}^{t}(K(\tau)+1)\,d\tau\big)\dot{\lambda}(s)\,ds +\dfrac{\sqrt{\epsilon}}{2}\int\limits_{T_{0}}^{t}\exp\big(\int\limits_{s}^{t}(K(\tau)+1)\,d\tau\big)\,ds .
	\end{align*}
On the other hand, from integration by parts, we note that
\begin{align*}
  \int\limits_{T_{0}}^{t}\exp\big(\int\limits_{s}^{t}(K(\tau)+1)\,d\tau\big)\dot{\lambda}(s)\,ds &=[\exp\big(\int\limits_{s}^{t}(K(\tau)+1)\,d\tau\big)\lambda(s)]_{T_{0}}^{t}
 +\int\limits_{T_{0}}^{t}(K(s)+1)\exp\big(\int\limits_{s}^{t}(K(\tau)+1)\,d\tau\big)\lambda(s)\,ds\\
 &=\lambda(t)-\exp\big(\int\limits_{T_{0}}^{t}(K(\tau)+1)\,d\tau\big)\sqrt{\epsilon}
  +\int\limits_{T_{0}}^{t}(K(s)+1)\exp\big(\int\limits_{s}^{t}(K(\tau)+1)\,d\tau\big)\lambda(s)\,ds,
\end{align*}
which combined with what precedes gives
\begin{align*}
	z_{\varepsilon}(t)&\leq  \sqrt{\rho(T_{0})+\epsilon}\,\exp\big(\int\limits_{T_{0}}^{t}(K(s)+1)\,ds\big) + \dfrac{\sqrt{\epsilon}}{2}\int\limits_{T_{0}}^{t}\exp\big(\int\limits_{s}^{t}(K(\tau)+1)\,d\tau\big)\,ds \\
	&+2\lambda(t)-2\exp\big(\int\limits_{T_{0}}^{t}(K(\tau)+1)\,d\tau\big)\sqrt{\epsilon} + 2\int\limits_{T_{0}}^{t}(K(s)+1)\exp\big(\int\limits_{s}^{t}(K(\tau)+1)\,d\tau\big)\lambda(s)\,ds .
\end{align*}
Consequently, observing that $\sqrt{\rho(t)}\leq \sqrt{\rho(t)+\lambda^{2}(t)}=z_{\varepsilon}(t)$ we obtain  	
\begin{align*}
 \sqrt{\rho(t)} &\leq
\sqrt{\rho(T_{0})+\epsilon}\,\exp\big(\int\limits_{T_{0}}^{t}(K(s)+1)\,ds\big) + \dfrac{\sqrt{\epsilon}}{2}\int\limits_{T_{0}}^{t}\exp\big(\int\limits_{s}^{t}(K(\tau)+1)\,d\tau\big)\,ds  +2\lambda(t)\\
\\&-2\exp\big(\int\limits_{T_{0}}^{t}(K(\tau)+1)\,d\tau\big)\sqrt{\epsilon} + 2\int\limits_{T_{0}}^{t}(K(s)+1)\exp\big(\int\limits_{s}^{t}(K(\tau)+1)\,d\tau\big)\lambda(s)\,ds,
\end{align*}	
which completes the proof of the lemma .
\end{proof}
\section{Main results}\label{s4}\ \\

In this section, we give and prove our main results in the study of the integro-differential sweeping process $(P_{f_{1},f_{2}})$. They concern the existence,
uniqueness, and continuous dependence of the solution with respect to the initial data.
We state first in the next proposition a result which will be utilized in our development.  Clearly, when the sets $C(t)$ are bounded, the hypothesis $(\mathcal{H}_1)$ is ensured by the usual Hausdorff variation hypothesis
$$
  \mathrm{haus}\big(C(s),C(t)\big) \leq \lvert \upsilon(s)-\upsilon(t) \rvert, \quad  \forall \;t,s\in [T_{0},T],
$$
which according to the equality
$\mathrm{haus}(S,S')=\mathop{\sup}\limits_{y\in H}|d_S(y)-d_{S'}(y)|$
for bounded sets $S,S'$, amounts to requiring for the bounded sets $C(s), C(t)$ that
\begin{equation}\label{eq1-VarDist}
	\lvert d_{C(s)}(y)-d_{C(t)}(y)\rvert\leq \lvert \upsilon(s)-\upsilon(t) \rvert, \quad  \forall \;t,s\in [T_{0},T],\,\,\forall y\in\,H  .
\end{equation}
The result is proved in \cite{cst,edm} under the hypothesis \eqref{eq1-VarDist} but the proof in
\cite{cst,edm} is valid with the hypothesis $(\mathcal{H}_1)$.
One of the advantages of $(\mathcal{H}_1)$ is that the sets $C(t)$ there need not be bounded.

\begin{proposition}\label{prop1.1}
Let $H$ be a real Hilbert space, suppose that $C(\cdot)$ satisfies $\mathcal{(}\mathcal{H}_{1}\mathcal{)}$. Let $h:\left[ T_{0},T\right] \longrightarrow H$ be a single-valued mapping in $L^{1}(\left[ T_{0},T\right] ,H)$. Then for any $ x_{0}\in C(T_{0}) $ there exists a unique absolutely continuous solution $x(\cdot)$ for the following differential inclusion
\begin{equation*}
\left\{
\begin{array}{l}
-\dot{x}(t)\in N_{C(t)}(x(t)) + h(t)\text{ a.e \ }t\in \left[ T_{0},T\right],\\
x(T_{0})=x_{0}.
\end{array}%
\right.
\end{equation*}%
Moreover $x(\cdot)$ satisfies the following inequality
\begin{equation}\label{ine_1_prop1.1}
\left\Vert \dot{x}(t)+h(t)\right\Vert \leq\left\Vert h(t)\right\Vert + \vert \dot{\upsilon}(t) \vert\;\; \text{a.e. }t\in \left[ T_{0},T\right].
\end{equation}
\end{proposition}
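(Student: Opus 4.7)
\medskip
\noindent\textbf{Proof plan.} The natural strategy is to absorb the single-valued perturbation $h$ into the moving set by a translation in the state, thereby reducing $(\mathrm{P})$ to the purely geometric (unperturbed) prox-regular sweeping process for which existence and uniqueness are already established in \cite{cst,edm}. Define $H(t):=\int_{T_0}^{t}h(s)\,ds$, which is absolutely continuous on $[T_0,T]$ since $h\in L^{1}([T_0,T],H)$, and put
\begin{equation*}
z(t):=x(t)+H(t),\qquad \widetilde{C}(t):=C(t)+H(t).
\end{equation*}
Since normal cones are invariant under translation, $N_{\widetilde{C}(t)}(z(t))=N_{C(t)}(x(t))$, so the inclusion $-\dot{x}(t)\in N_{C(t)}(x(t))+h(t)$ is equivalent to $-\dot{z}(t)\in N_{\widetilde{C}(t)}(z(t))$, with the admissible initial datum $z(T_0)=x_{0}\in \widetilde{C}(T_0)$.

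Next I verify that the translated family $\widetilde{C}(\cdot)$ still fits the framework of $(\mathcal{H}_1)$. Translation preserves $r$-prox-regularity, so each $\widetilde{C}(t)$ remains $r$-prox-regular with the same $r$. For the modulus, from $C(t)\subset C(s)+|\upsilon(t)-\upsilon(s)|\mathbb{B}_H$ and $\|H(t)-H(s)\|\leq\bigl|\int_{s}^{t}\|h(\tau)\|\,d\tau\bigr|$ one deduces
\begin{equation*}
\widetilde{C}(t)\subset \widetilde{C}(s)+\bigl(|\upsilon(t)-\upsilon(s)|+\|H(t)-H(s)\|\bigr)\mathbb{B}_H\subset \widetilde{C}(s)+|\widetilde{\upsilon}(t)-\widetilde{\upsilon}(s)|\mathbb{B}_H,
\end{equation*}
where $\widetilde{\upsilon}(t):=\upsilon(t)+\int_{T_0}^{t}\|h(\tau)\|\,d\tau$ is absolutely continuous (the decomposition can be done after replacing $\upsilon$ by its monotone companion $\int_{T_0}^{t}|\dot{\upsilon}(\tau)|\,d\tau$, which induces the same inclusions). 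Thus $\widetilde{C}(\cdot)$ satisfies $(\mathcal{H}_1)$ with modulus $\widetilde{\upsilon}$.

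Invoking now the known well-posedness result for the unperturbed prox-regular sweeping process under $(\mathcal{H}_1)$ (valid for unbounded moving sets, as argued in \cite{cst,edm}), there exists one and only one absolutely continuous $z:[T_0,T]\to H$ with $z(T_0)=x_{0}$ and $-\dot{z}(t)\in N_{\widetilde{C}(t)}(z(t))$ a.e., together with the classical velocity estimate $\|\dot{z}(t)\|\leq|\dot{\widetilde{\upsilon}}(t)|\leq|\dot{\upsilon}(t)|+\|h(t)\|$. Setting $x(t):=z(t)-H(t)$ then delivers the unique absolutely continuous solution of the perturbed problem, and since $\dot{z}(t)=\dot{x}(t)+h(t)$, the inequality translates verbatim into $\|\dot{x}(t)+h(t)\|\leq|\dot{\upsilon}(t)|+\|h(t)\|$, which is \eqref{ine_1_prop1.1}.

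The only delicate point I anticipate is the verification that the classical argument of \cite{cst,edm} truly goes through under the one-sided inclusion hypothesis $(\mathcal{H}_1)$ rather than the symmetric distance-variation hypothesis \eqref{eq1-VarDist}; since $C(t)$ is allowed to be unbounded, the Hausdorff reformulation is not at our disposal, but the catching-up construction used in those references only requires the ability to project from $C(t_k)$ onto nearby $C(t_{k+1})$, a move that is controlled precisely by $|\upsilon(t_{k+1})-\upsilon(t_k)|$ via $(\mathcal{H}_1)$. This is exactly the point the authors flag before stating the proposition.
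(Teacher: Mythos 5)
Your proposal is correct, and it reaches the conclusion by a route different from the paper's. The paper does not reprove Proposition \ref{prop1.1} at all: it invokes the well-posedness results of \cite{cst,edm}, which treat the sweeping process with the integrable single-valued perturbation $h$ directly in the catching-up argument, and merely observes that those proofs remain valid when the distance-variation hypothesis \eqref{eq1-VarDist} is replaced by the inclusion hypothesis $(\mathcal{H}_1)$ (so that unbounded sets $C(t)$ are allowed). You instead absorb $h$ by the state translation $z(t)=x(t)+\int_{T_0}^{t}h(s)\,ds$, reducing the problem to the \emph{unperturbed} prox-regular sweeping process driven by the translated sets $\widetilde C(t)=C(t)+\int_{T_0}^{t}h(s)\,ds$, and then cite the base theorem. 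The steps you need for this reduction are all sound: translation invariance of the (Clarke/proximal) normal cone and of $r$-prox-regularity, the transfer of the modulus — where your parenthetical replacement of $\upsilon$ by its monotone companion $\int_{T_0}^{t}|\dot\upsilon(\tau)|\,d\tau$ is indeed necessary, since $|\upsilon(t)-\upsilon(s)+\int_s^t\|h\||$ need not dominate the required quantity when $\upsilon$ is not monotone — and the identity $\dot z=\dot x+h$, which converts the unperturbed velocity bound $\|\dot z(t)\|\le|\dot{\widetilde\upsilon}(t)|=|\dot\upsilon(t)|+\|h(t)\|$ verbatim into \eqref{ine_1_prop1.1}; uniqueness transfers through the bijection $x\mapsto z$. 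What each approach buys: your reduction is cleaner and more self-contained in that it only requires the unperturbed theorem under $(\mathcal{H}_1)$ and yields the estimate in exactly the stated form, but it exploits crucially that $h$ is state-independent, so it cannot be upgraded to the Carathéodory perturbation $f_1(t,x)$; the paper's citation route (perturbation built into the scheme) is the one that scales to that more general setting, which is why the authors rely on it. Your closing caveat about the catching-up construction needing only the one-sided control $d_{C(t_{k+1})}(x_k)\le|\upsilon(t_{k+1})-\upsilon(t_k)|$ furnished by $(\mathcal{H}_1)$ is precisely the point the paper itself flags before the proposition, so no gap remains.
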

\begin{theorem}\label{exist}
Let $H$ be a  real Hilbert space and assume that $ \mathcal{(}\mathcal{H}_{1}\mathcal{)} $, $ \mathcal{(}\mathcal{H}_{2}\mathcal{)} $ and $ \mathcal{(}\mathcal{H}_{3}\mathcal{)} $ are satisfied. Then for any initial point $x_{0} \in H,$ with $x_{0}\in C(T_0)$ there exists a unique absolutely continuous solution $x : [T_0,T]\longrightarrow H $ of the differential inclusion $(P_{f_{1},f_{2}})$. This solution satisfies:
\begin{enumerate}
\item For\,\, a.e.\,\,$t\in [T_0,T]$
 \begin{equation}\label{30}
\lVert \dot{x}(t)+f_{1}(t,x(t))+\int\limits_{T_0}^{t}f_{2}(t,s,x(s))\,ds \rVert\leq \lvert\dot{\upsilon}(t)\rvert+\lVert f_{1}(t,x(t))\rVert+\int\limits_{T_0}^{t}\lVert f_{2}(t,s,x(s))\rVert\,ds .
\end{equation}
 \item If $\displaystyle\int\limits_{T_0}^T\bigg[\beta_{1}(\tau)+\int\limits_{T_0}^{\tau}\beta_{2}(\tau,s)\,ds\bigg] d\tau<\dfrac{1}{4}$, one has
\begin{equation}\label{31}
\left\Vert f_{1}(t,x(t))\right\Vert \leq (1+M)\beta_{1}(t),\,\,\,\text{for all}\,\,\,t\in[T_0,T],
\end{equation}
\begin{equation}\label{32}
\left\Vert f_{2}(t,s,x(s))\right\Vert\leq (1+M)\beta_{2}(t,s),\,\,\,\text{for all}\,\,\,(t,s)\in Q_{\Delta},
\end{equation}
and for almost all $t\in [T_0,T]$
\begin{equation}\label{33}
\left\Vert \dot{x}(t)+f_{1}(t,x(t))+\displaystyle\int\limits_{T_{0}}^{t}f_{2}(t,s,x(s))\,ds\right\Vert\leq (1+M)\bigg(\beta_{1}(t)+\int\limits_{T_{0}}^{t}\beta_{2}(t,s)\,ds\bigg)+\vert \dot{\upsilon}(t) \vert,
\end{equation}
where $ M:=2\bigg(\left\Vert x_{0} \right\Vert+\displaystyle\int\limits_{T_{0}}^{T} \vert \dot{\upsilon}(\tau) \vert\,d\tau+\dfrac{1}{2}\bigg) $.
\item Assume  the following strengthened form of
assumption $ \mathcal{(}\mathcal{H}_{3,1}\mathcal{)} $ on the function $f_{2}$ holds:\\
$ \mathcal{(}\mathcal{H}^{\prime}_{3,1}\mathcal{)} $ : there exist  non-negative  functions $ \alpha(\cdot)\in L^{1}([T_0,T],\mathbb{R}) $ and\\  $ g(\cdot)\in L^{1}(P_{\Delta},\mathbb{R}) $ such that
\begin{equation*}
\lVert f_{2}(t,s,x) \rVert\leq  g(t,s) +\alpha(t)\lVert x \rVert,\,\,\,\text{for any}\,\, (t,s)\in Q_{\Delta}\,\,\,\text{and any}\,\,\,x\in\underset{t\in [T_0,T]}\bigcup C(t).
\end{equation*}
  Then we have
\begin{gather}
\lVert f_{1}(t,x(t)) \rVert\leq (1+\widetilde{M})\beta_{1}(t),\,\,\,\text{for all}\,\,\,t\in[T_0,T],\label{41}\\
\lVert f_{2}(t,s,x(s)) \rVert \leq g(t,s) + \alpha(t)\widetilde{M},\,\,\,\text{a.e.}\,\,\,(t,s)\in Q_{\Delta},\label{42}
\end{gather}
and for almost all $t \in [T_0,T]$
\begin{gather}
\lVert \dot{x}(t)+f_{1}(t,x(t))+\int\limits_{T_0}^{t}f_{2}(t,s,x(s))\,ds \rVert\leq \rvert\dot{\upsilon}(t)\rvert+ (1+\widetilde{M})\beta_{1}(t)+\int\limits_{T_0}^{t}g(t,s)\,ds+T\alpha(t)\widetilde{M},\label{43}
\end{gather}
where
\begin{align*}
 \widetilde{M}:= \lVert x_{0} \rVert\exp\bigg(\int\limits_{T_0}^{T}(b(\tau)+1)\,d\tau\bigg)+\exp\bigg(\int\limits_{T_0}^{T}(b(\tau)+1)\,d\tau\bigg)\int\limits_{T_0}^{T}\bigg(\rvert\dot{\upsilon}(s)\rvert +2\beta_{1}(s)+2\int\limits_{T_0}^{T}g(s,\tau)\,d\tau\bigg)\,ds,
\end{align*}
\begin{equation*}
\text{and}\,\,\,b(t):=2\max\{\beta_{1}(t),\alpha(t)\}\,\,\, \text{for all}\,\,\, t\in [T_0,T].
\end{equation*}
\end{enumerate}
\end{theorem}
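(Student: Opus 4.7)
The plan is to follow the semi-discretization scheme announced in the introduction and then to pass to the limit using the new Gronwall-like inequality of Lemma 3.3. Fix $n\in\mathbb{N}$, set $h_n=(T-T_0)/n$ and $t_k^n=T_0+kh_n$. Define $x_n$ inductively on each subinterval $[t_k^n,t_{k+1}^n]$ as the (unique) absolutely continuous solution, obtained from Proposition \ref{prop1.1}, of the perturbed sweeping process with source term frozen at the left endpoints:
\begin{equation*}
-\dot{x}_n(t)\in N_{C(t)}(x_n(t))+f_1(t,x_n(t_k^n))+\sum_{j=0}^{k-1}\int_{t_j^n}^{t_{j+1}^n}f_2(t,s,x_n(t_j^n))ds+\int_{t_k^n}^{t}f_2(t,s,x_n(t_k^n))ds.
\end{equation*}
From inequality \eqref{ine_1_prop1.1} applied stepwise, together with the linear growth $(\mathcal{H}_{2,1})$ and $(\mathcal{H}_{3,1})$, I would first bound $\|x_n(t)\|$ by a classical (scalar) Gronwall argument (this is where Lemma \ref{22} enters), getting a uniform radius $\eta$ so that all $x_n(t)$ remain in $B[0,\eta]$ for $t\in[T_0,T]$. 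In turn $\|\dot{x}_n\|$ is uniformly $L^1$-dominated, so the family $\{x_n\}$ is equi-absolutely continuous.

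The heart of the proof is showing $\{x_n\}$ is Cauchy in $\mathcal{C}([T_0,T],H)$. Let $\rho_{n,m}(t)=\|x_n(t)-x_m(t)\|^2$. Using $-\dot{x}_n(t)-F_n(t)\in N_{C(t)}(x_n(t))$ (with $F_n(t)$ the frozen perturbation on the grid), the hypomonotonicity of the normal cone on the $r$-prox-regular sets $C(t)$ (Proposition 2.5, together with assertion (a) of Proposition \ref{proj}) gives, upon bounding $\|\dot x_n+F_n\|$ by \eqref{ine_1_prop1.1},
\begin{equation*}
\tfrac{d}{dt}\rho_{n,m}(t)\le \varepsilon_{n,m}(t)+\epsilon_{n,m}+K_1(t)\rho_{n,m}(t)+K_2(t)\sqrt{\rho_{n,m}(t)}\int_{T_0}^{t}\sqrt{\rho_{n,m}(s)}\,ds,
\end{equation*}
where $\varepsilon_{n,m}(t)\to 0$ in $L^1$ (coming from the equicontinuity of $t\mapsto x_n(t)$ and the local Lipschitz continuity $(\mathcal{H}_{2,2})$, $(\mathcal{H}_{3,2})$ in the spatial variable on the ball $B[0,\eta]$), the constant $\epsilon_{n,m}$ is as small as one likes, and the square-root/integral term is precisely the obstruction caused by the Volterra memory coupling $x_n(s)$ for $s<t$ combined with the prox-regularity quadratic deficit. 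Lemma \ref{8} then yields $\sup_t\rho_{n,m}(t)\to 0$, hence a uniform limit $x\in \mathcal{C}([T_0,T],H)$. Standard weak-$L^1$ compactness of $\dot x_n$ and closure of the graph of $N_{C(t)}(\cdot)$ under strong–weak convergence (for prox-regular sets with equi-bounded velocities) deliver that $x$ satisfies $(P_{f_1,f_2})$.

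For the three sets of pointwise estimates, I proceed as follows. Inequality \eqref{30} is immediate from applying Proposition \ref{prop1.1} with $h(t)=f_1(t,x(t))+\int_{T_0}^{t}f_2(t,s,x(s))ds$. For \eqref{31}--\eqref{33}, I integrate \eqref{30} to obtain $\|x(t)\|\le \|x_0\|+\int_{T_0}^{t}|\dot\upsilon|+2\int_{T_0}^{t}[\beta_1(\tau)+\int_{T_0}^\tau \beta_2(\tau,s)ds](1+\|x(\tau)\|)d\tau$ after using $(\mathcal{H}_{2,1})$, $(\mathcal{H}_{3,1})$; the smallness assumption $\int_{T_0}^T[\beta_1(\tau)+\int_{T_0}^\tau\beta_2(\tau,s)ds]d\tau<\frac14$ absorbs the $\|x\|$ term and gives $\|x(t)\|\le M$ explicitly, whence $(\mathcal H_{2,1})$ and $(\mathcal H_{3,1})$ yield \eqref{31}, \eqref{32}, and then \eqref{33} follows from \eqref{30}. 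For \eqref{41}--\eqref{43}, the stronger growth $(\mathcal H'_{3,1})$ reduces the estimate of $\|x(t)\|$ to an inequality of the type treated by Lemma \ref{22} (with $b_1=2\beta_1$, $b_2=2\alpha$, $a(t)=|\dot\upsilon(t)|+2\beta_1(t)+2\int_{T_0}^T g(t,s)ds$), giving $\|x(t)\|\le\widetilde{M}$ without any smallness condition, and \eqref{41}--\eqref{43} follow.

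Uniqueness and continuous dependence on the initial datum fall out of the same hypomonotonicity/Gronwall chain used for the Cauchy property, applied now to $\|x(t)-\tilde x(t)\|^2$ for two solutions $x,\tilde x$: one obtains the differential inequality of Lemma \ref{8} with $\varepsilon\equiv 0$ and $\epsilon=0$ (or arbitrary initial gap), yielding $\|x(t)-\tilde x(t)\|\le C\|x(T_0)-\tilde x(T_0)\|$ for all $t\in[T_0,T]$. The main obstacle is the Cauchy estimate, because the Volterra memory prevents reducing $(P_{f_1,f_2})$ to the standard perturbed sweeping process (as observed in the introduction for the autonomous integrand case), and this is exactly where the mixed square-root/integral term in Lemma \ref{8} is indispensable.
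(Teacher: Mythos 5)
Your proposal is correct and follows essentially the same route as the paper: the same semi-discretization with the Volterra term frozen at grid points and solved stepwise by Proposition \ref{prop1.1}, the same hypomonotonicity-plus-Lemma \ref{8} argument for the Cauchy property and for uniqueness, passage to the limit by weak $L^1$ compactness of the derivatives together with the prox-regular quadratic inequality, and Lemma \ref{22} for the case-3 bound $\lVert x(t)\rVert\leq\widetilde M$. Your only deviations are minor and workable: you replace the paper's ``without loss of generality $\int_{T_0}^T[\beta_1+\int\beta_2]<\tfrac14$'' absorption (and the subsequent concatenation over subintervals) by an a priori Gronwall bound on the approximants --- just note that since $\beta_2(t,s)$ depends on both variables this is not literally Lemma \ref{22} but follows from the monotonicity of $\rho_n(t)=\lVert x_0\rVert+\int_{T_0}^t\lVert\dot x_n\rVert$, which reduces the memory term to $\big(\int_{T_0}^t\beta_2(t,s)\,ds\big)\rho_n(t)$ and lets the classical Lemma \ref{m} apply --- and you obtain \eqref{30} from the uniqueness assertion and estimate \eqref{ine_1_prop1.1} of Proposition \ref{prop1.1} instead of the paper's direct proximal-projection argument, which is a legitimate shortcut.
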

\begin{proof}
The proof of existence of solution is divided in several steps.\\
\textbf{Step 1.} \textbf{Discretization of the interval }$I=\left[T_{0},T\right].$  \newline
For each $n\in\mathbb{N}$,  divide the interval $I$ into $n$ intervals of
length $h=\frac{T-T_{0}}{n}$ and  define, for all $i\in \left\{
0,\cdots,n-1\right\} $
\begin{equation}
\left\{
\begin{array}{l}
t_{i+1}^{n}:=t_{i}^{n}+h=T_{0}+ih, \\
I_{i}^{n}:=\left[ t_{i}^{n},t_{i+1}^{n}\right],
\end{array}%
\right.
\end{equation}%
so that
\begin{equation}
T_{0}=t_{0}^{n}<t_{1}^{n}<\cdots<t_{i}^{n}<t_{i+1}^{n}<\cdots<t_{n}^{n}=T.
\end{equation}
\textbf{Step 2. Construction of the sequence  $x_{n}(\cdot)$.} \newline
We construct a sequence of mappings $(x_{n}(\cdot))_{n\in\mathbb{N}}$ in $\mathcal{C}
(I,H)$ which converges uniformly to a solution $x(\cdot)$ of $(P)$.\\
Our method consists in establishing a  sequence of discrete solutions\; $(x_{k}^{n}(\cdot))_{n\in\mathbb{N}}$\; in each interval $I_{k}^{n}:=\left[
t_{k}^{n},t_{k+1}^{n}\right]  (0\leq k\leq n-1$) by using
Proposition \ref{prop1.1}. Indeed, we proceed as follows.
\newline
Given the following problem
\begin{equation}
(P_{0}):\left\{
\begin{array}{l}
-\dot{x}(t)\in N_{C(t)}(x(t))+f_{1}(t,x_{0})+\displaystyle\int\limits_{T_{0}}^{t}f_{2}(t,s,x_{0})\,ds\;\;\text{ a.e. \ }t\in \left[
T_{0},t_{1}^{n}\right],  \\
x(T_{0})=x_{0}.
\end{array}%
\right.
\end{equation}%
Then $(P_0)$ is a perturbed  sweeping process with the perturbation depending only on time.\\
Let $h_0:[T_0,t^n_1]\to H$ be defined by  $ h_{0}(t):=f_{1}(t,x_{0})+\displaystyle\int\limits_{T_{0}}^{t}f_{2}(t,s,x_{0})\,ds$ for all $t\in \left[
T_{0},t_{1}^{n}\right] $. We see by integrable linear growth condition that
 \begin{equation*}
 \int\limits_{T_0}^{T}\Vert h_{0}(t) \Vert dt \leq (1+\Vert x_{0} \Vert)\int\limits_{T_0}^{T} \beta_{1}(t)dt+(1+\Vert x_{0} \Vert)\int\limits_{T_0}^{T}\int\limits_{T_0}^{t} \beta_{2}(t,s)ds\,dt,
 \end{equation*}
and since $ \beta_{1}(\cdot)\in L^{1}([T_{0},T],\mathbb{R}_{+}) $ and $ \beta_{2}(\cdot)\in L^{1}(Q_{\Delta},\mathbb{R}_{+}) $, then $ h_{0}(\cdot) $ is an integrable function. Therefore, by Proposition \ref{prop1.1} the differential inclusion $(P_0)$ has a unique absolutely continuous solution denoted by
\begin{equation}
x_{0}^{n}(\cdot):[ T_{0},t_{1}^{n}] \longrightarrow H,
\end{equation}
satisfying the following inequality
\begin{equation}
\left\Vert \dot{x}_{0}^{n}(t)+f_{1}(t,x_{0})+\displaystyle\int\limits_{T_{0}}^{t}f_{2}(t,s,x_{0})\,ds\right\Vert \leq \left\Vert f_{1}(t,x_{0})+\displaystyle\int\limits_{T_{0}}^{t}f_{2}(t,s,x_{0})\,ds \right\Vert+\vert \dot{\upsilon}(t) \vert\;
\end{equation}
$ \text{ a.e. }t\in \left[ T_{0},t_{1}^{n}\right]. $\\
Next, let us consider the following problem
\begin{equation}
(P_{1}):\left\{
\begin{array}{l}
-\dot{x}(t)\in N_{C(t)}(x(t))+f_{1}(t,x_{0}^{n}(t_{1}^{n}))+\displaystyle\int\limits_{T_{0}}^{t^{n}_{1}}f_{2}(t,s,x_{0})\,ds\displaystyle\int\limits_{t^{n}_{1}}^{t}f_{2}(t,s,x^{n}_{0}(t^{n}_{1}))\,ds\;\text{ a.e. }t\in
\left[ t_{1}^{n},t_{2}^{n}\right], \\
x(t_{1}^{n})=x_{0}^{n}(t_{1}^{n}).
\end{array}%
\right.
\end{equation}%
Let $h_1:[t^n_1,t^n_2] \to H$ be defined by
$$
 h_{1}(t):=f_{1}(t,x_{0}^{n}(t_{1}^{n}))+\displaystyle\int\limits_{T_{0}}^{t^{n}_{1}}f_{2}(t,s,x_{0})\,ds+\displaystyle\int\limits_{t_{1}^{n}}^{t}f_{2}(t,s,x_{0}^{n}(t_{1}^{n}))\,ds\;\text{for all}\; t\in \left[
t_{1}^{n},t_{2}^{n}\right].
$$
 We can see by integrable linear growth conditions that
 \begin{align*}
 \int\limits_{T_0}^{T}\Vert h_{1}(t) \Vert dt &\leq (1+\Vert x_{0}^{n}(t_{1}^{n}) \Vert)\int\limits_{T_0}^{T} \beta_{1}(t)dt+(1+\Vert x_{0} \Vert)\int\limits_{T_0}^{T}\int\limits_{T_0}^{t^{n}_{1}} \beta_{2}(t,s)ds\,dt+(1+\Vert x_{0}^{n}(t_{1}^{n}) \Vert)\int\limits_{T_0}^{T}\int\limits_{t^{n}_{1}}^{t} \beta_{2}(t,s)ds\,dt\\
 & \leq (1+\text{max}\{\Vert x_{0}^{n}(t_{1}^{n}) \Vert,\Vert x_{0}\Vert\})\bigg(\int\limits_{T_0}^{T} \beta_{1}(t)dt+\int\limits_{T_0}^{T}\int\limits_{T_0}^{t} \beta_{2}(t,s)ds\,dt\bigg).
 \end{align*}
 We know from the above problem $ (P_{0}) $ that the mapping $ x_{0}^{n}(\cdot) $ is absolutely continuous, then in particular bounded on $ [T_0,T] $. Further,
 since $ \beta_{1}(\cdot)\in L^{1}([T_{0},T],\mathbb{R}_{+}) $ and $ \beta_{2}(\cdot)\in L^{1}(Q_{\Delta},\mathbb{R}_{+}) $, then $ h_{1}(\cdot) $ is an integrable mapping. The same arguments as above show that $(P_{1})$ has a unique  absolutely continuous solution denoted by
\begin{equation}
x_{1}^{n}(\cdot):\left[ t_{1}^{n},t_{2}^{n}\right] \longrightarrow H,
\end{equation}
and this solution satisfies the following inequality
\begin{align}
&\left\Vert \dot{x}_{1}^{n}(t)+f_{1}(t,x_{0}^{n}(t_{1}^{n}))+\displaystyle\int\limits_{T_{0}}^{t^{n}_{1}}f_{2}(t,s,x_{0})\,ds+\displaystyle\int\limits_{t^{n}_{1}}^{t}f_{2}(t,s,x^{n}_{0}(t^{n}_{1}))\,ds\right\Vert\notag\\
& \leq \left\Vert f_{1}(t,x_{0}^{n}(t_{1}^{n}))+\displaystyle\int\limits_{T_{0}}^{t^{n}_{1}}f_{2}(t,s,x_{0})\,ds+\displaystyle\int\limits_{t^{n}_{1}}^{t}f_{2}(t,s,x^{n}_{0}(t^{n}_{1}))\,ds \right\Vert+\vert \dot{\upsilon}(t) \vert
\;\;\text{ a.e. }t\in \left[ t_{1}^{n},t_{2}^{n}\right].
\end{align}
Successively, for each $n$, we have a finite sequence of absolutely continuous mappings $(x_{k}^{n}(\cdot))_{0\leq k\leq n-1}$ with for each $k\in \{0,\cdots,n-1\}$
\begin{equation}
x_{k}^{n}(\cdot):\left[ t_{k}^{n},t_{k+1}^{n}\right] \longrightarrow H
\end{equation}%
 such that
\begin{equation}
(P_{k-1}):\left\{
\begin{array}{l}
-\dot{x}_{k}^{n}(t)\in N_{C(t)}(x_{k}^{n}(t))+f_{1}(t,x_{k-1}^{n}(t_{k}^{n}))+\displaystyle\sum\limits_{j=0}^{k-1}\displaystyle\int\limits_{t^{n}_{j}}^{t^{n}_{j+1}}f_{2}(t,s,x^{n}_{j-1}(t^{n}_{j}))\,ds\\
+\displaystyle\int\limits_{t^{n}_{k}}^{t}f_{2}(t,s,x^{n}_{k-1}(t^{n}_{k}))\,ds\;\; \text{ a.e. }t\in \left[ t_{k}^{n},t_{k+1}^{n}\right]. \\
x_{k}^{n}(t_{k}^{n})=x_{k-1}^{n}(t_{k}^{n}),%
\end{array}%
\right.\label{s11}
\end{equation}
where for $k=0$ we put $x_{-1}^{n}(T_{0}):=x_{0}$.
Moreover
\begin{align}
&\left\Vert \dot{x}_{k}^{n}(t)+f_{1}(t,x_{k-1}^{n}(t_{k}^{n}))+\displaystyle\sum\limits_{j=0}^{k-1}\displaystyle\int\limits_{t^{n}_{j}}^{t^{n}_{j+1}}f_{2}(t,s,x^{n}_{j-1}(t^{n}_{j}))\,ds+\displaystyle\int\limits_{t^{n}_{k}}^{t}f_{2}(t,s,x^{n}_{k-1}(t^{n}_{k}))\,ds,\right\Vert\notag\\
& \leq \left\Vert f_{1}(t,x_{k-1}^{n}(t_{k}^{n}))+\displaystyle\sum\limits_{j=0}^{k-1}\displaystyle\int\limits_{t^{n}_{j}}^{t^{n}_{j+1}}f_{2}(t,s,x^{n}_{j-1}(t^{n}_{j}))\,ds+\displaystyle\int\limits_{t^{n}_{k}}^{t}f_{2}(t,s,x^{n}_{k-1}(t^{n}_{k}))\,ds, \right\Vert+\vert \dot{\upsilon}(t) \vert,   \label{s12}
\end{align}
$ \text{ a.e. }t\in \left[ t_{k}^{n},t_{k+1}^{n}\right]. $\\
Defining for each $k\in\{0,1,\cdots,n-1\}$ the mapping $h_k:[t^n_k,t^n_{k+1}]\to H$ by
\begin{equation*}
h_{k}(t):=f_{1}(t,x_{k-1}^{n}(t_{k}^{n}))+\displaystyle\sum\limits_{j=0}^{k-1}\displaystyle\int\limits_{t^{n}_{j}}^{t^{n}_{j+1}}f_{2}(t,s,x^{n}_{j-1}(t^{n}_{j}))\,ds+\displaystyle\int\limits_{t^{n}_{k}}^{t}f_{2}(t,s,x^{n}_{k-1}(t^{n}_{k}))\,ds,
\end{equation*}
for all $t\in \left[t_{k}^{n},t_{k+1}^{n}\right] $.
 Clearly, by integrable linear growth conditions we have
 \begin{align*}
 \int\limits_{T_0}^{T}\Vert h_{k}(t) \Vert dt &\leq (1+\Vert x_{k-1}^{n}(t_{k}^{n}) \Vert)\int\limits_{T_0}^{T} \beta_{1}(t)dt+\displaystyle\sum\limits_{j=0}^{k-1}(1+\Vert x_{j-1}^{n}(t_{j}^{n}) \Vert)\int\limits_{t^{n}_{j}}^{t^{n}_{j+1}} \beta_{2}(t,s)ds\,dt\\
 &  + (1+\Vert x_{k-1}^{n}(t_{k}^{n}) \Vert)\int\limits_{T_0}^{T}\int\limits_{t^{n}_{k}}^{t}\beta_{2}(t,s)ds\,dt\\
 & \leq (1+\underset{0\leq j\leq k-1}{\max }\left\Vert x_{j-1}^{n}(t_{j}^{n})\right\Vert )\bigg(\int\limits_{T_0}^{T} \beta_{1}(t)dt+\int\limits_{T_0}^{T}\int\limits_{T_0}^{t} \beta_{2}(t,s)ds\,dt\bigg).
 \end{align*}
 We know from the above problems $ (P_{j})_{0\leq j \leq k-1} $ that the mapping $ x_{k-1}^{n}(\cdot) $ is absolutely continuous, then in particular bounded on $ [T_0,T] $. Further,
 since $ \beta_{1}(\cdot)\in L^{1}([T_{0},T],\mathbb{R}_{+}) $ and $ \beta_{2}(\cdot)\in L^{1}(P_{\Delta},\mathbb{R}_{+}) $; the mapping $ h_{k}(\cdot) $ is integrable
on $[t^n_k,t^n_{k+1}]$.

   Now, we define the sequence $(x_n(\cdot))_n$ from the discrete sequences $(x_{k}^{n}(.))$ as follows.\\
For each $n\in\mathbb{N}$, let $x_{n}(\cdot):\left[ T_{0},T\right]\longrightarrow H$ such that
\begin{equation}
x_{n}(t):=x_{k}^{n}(t),\text{ if }t\in \left[ t_{k}^{n},t_{k+1}^{n}\right].  \label{s13}
\end{equation}
It is obvious from this definition that $x_{n}(\cdot)$ is absolutely continuous.  \newline
Let $\theta _{n}(\cdot):\left[ T_{0},T\right]\longrightarrow \left[ T_{0},T\right]$ be defined by
\begin{equation}
\left\{
\begin{array}{l}
\theta _{n}(T_{0}):=T_{0}, \\
\theta _{n}(t):=t_{k}^{n}\text{, if }t\in \left] t_{k}^{n},t_{k+1}^{n}\right].
\end{array}
\right.   \label{s14}
\end{equation}%
We obtain from  \eqref{s11}, \eqref{s12}, \eqref{s13}, \eqref{s14}, that
\begin{equation}
\left\{
\begin{array}{l}
-\dot{x}_{n}(t)\in N_{C(t)}(x_{n}(t))+f_{1}(t,x_{n}(\theta _{n}(t)))+\displaystyle\int\limits_{T_{0}}^{t}f_{2}(t,s,x_{n}(\theta _{n}(s)))\,ds\text{ a.e.
 }t\in \left[ T_{0},T\right],  \\
x_{n}(T_{0})=x_{0},%
\end{array}%
\right.   \label{swdpte}
\end{equation}%
and a.e. $ t\in \left[ T_{0},T\right] $ we have
\begin{align}
&\left\Vert \dot{x}_{n}(t)+f_{1}(t,x_{n}(\theta _{n}(t)))+\displaystyle\int\limits_{T_{0}}^{t}f_{2}(t,s,x_{n}(\theta _{n}(s)))\,ds\right\Vert\notag\\
& \leq \left\Vert f_{1}(t,x_{n}(\theta _{n}(t)))+\displaystyle\int\limits_{T_{0}}^{t}f_{2}(t,s,x_{n}(\theta _{n}(s)))\,ds \right\Vert+\vert \dot{\upsilon}(t) \vert.\label{es0}
\end{align}
\textbf{Step 3. We show that the sequence $(\dot{x}_{n}(\cdot))$ is uniformly dominated by an integrable function.}\newline
Since $\beta_{1}(\cdot)\in L^1([T_{0},T],\mathbb{R}_+)$ and $ \beta_{2}(\cdot,\cdot)\in L^1(P_{\Delta},\mathbb{R}_+) $ we suppose without loss of generality that
\begin{equation}\label{less_beta}
\int\limits_{T_0}^T\bigg[\beta_{1}(\tau)+\int\limits_{T_0}^{\tau}\beta_{2}(\tau,s)\,ds\bigg] d\tau<\dfrac{1}{4}.
\end{equation}
By construction we have for each $i\in \left\{ 0,\cdots,n-1\right\} $ and for
a.e. $t\in \left[ t_{i}^{n},t_{i+1}^{n}\right]$
\begin{align*}
&\left\Vert \dot{x}_{n}(t)+f_{1}(t,x_{n}(t_{i}^{n}))+\displaystyle\sum\limits_{j=0}^{i-1}\displaystyle\int\limits_{t^{n}_{j}}^{t^{n}_{j+1}}f_{2}(t,s,x_{n}(t^{n}_{j}))\,ds+\displaystyle\int\limits_{t^{n}_{i}}^{t}f_{2}(t,s,x_{n}(t^{n}_{i}))\,ds\right\Vert\\
& \leq \left\Vert f_{1}(t,x_{n}(t_{i}^{n}))+\displaystyle\sum\limits_{j=0}^{i-1}\displaystyle\int\limits_{t^{n}_{j}}^{t^{n}_{j+1}}f_{2}(t,s,x_{n}(t^{n}_{j}))\,ds+\displaystyle\int\limits_{t^{n}_{i}}^{t}f_{2}(t,s,x_{n}(t^{n}_{i}))\,ds \right\Vert + \vert \dot{\upsilon}(t) \vert.
\end{align*}
According to $ \mathcal{(}\mathcal{H}_{2,1}\mathcal{)} $ and $ \mathcal{(}\mathcal{H}_{3,1}\mathcal{)} $ we have
\begin{align*}
\left\Vert \dot{x}_{n}(t)\right\Vert&\leq   2\left\Vert f_{1}(t,x_{n}(t_{i}^{n}))\right\Vert+2\displaystyle\sum\limits_{j=0}^{i-1}\displaystyle\int\limits_{t^{n}_{j}}^{t^{n}_{j+1}}\left\Vert f_{2}(t,s,x_{n}(t^{n}_{j})) \right\Vert\,ds+2\displaystyle\int\limits_{t^{n}_{i}}^{t}\left\Vert f_{2}(t,s,x_{n}(t^{n}_{i})) \right\Vert\,ds+\vert \dot{\upsilon}(t) \vert\\
&\leq 2(1+\underset{0\leq k\leq n}{\max }\left\Vert x_{n}(t_{k}^{n})\right\Vert )\beta_{1}(t)+2(1+\underset{0\leq k\leq n}{\max }\left\Vert
x_{n}(t_{k}^{n})\right\Vert )\displaystyle\sum\limits_{j=0}^{i-1}\displaystyle\int\limits_{t^{n}_{j}}^{t^{n}_{j+1}}\beta_{2}(t,s)\,ds\\
&+2(1+\underset{0\leq k\leq n}{\max }\left\Vert x_{n}(t_{k}^{n})\right\Vert )\displaystyle\int\limits_{t^{n}_{i}}^{t}\beta_{2}(t,s)\,ds+\vert \dot{\upsilon}(t) \vert\\
&=\vert \dot{\upsilon}(t) \vert + 2(1+\underset{0\leq k\leq n}{\max }\left\Vert x_{n}(t_{k}^{n})\right\Vert )\beta_{1}(t)+2(1+\underset{0\leq k\leq n}{\max }\left\Vert x_{n}(t_{k}^{n})\right\Vert )\displaystyle\int\limits_{T_{0}}^{t}\beta_{2}(t,s)\,ds,
\end{align*}
and then
\begin{align*}
\left\Vert x_{n}(t^{n}_{i+1}) \right\Vert&\leq \left\Vert x_{n}(t^{n}_{i}) \right\Vert +\int\limits_{t^{n}_{i}}^{t^{n}_{i+1}} \vert \dot{\upsilon}(\tau) \vert\,d\tau + 2(1+\underset{0\leq k\leq n}{\max }\left\Vert x_{n}(t_{k}^{n})\right\Vert )\int\limits_{t^{n}_{i}}^{t^{n}_{i+1}}\beta_{1}(\tau)\,d\tau\\
&+2(1+\underset{0\leq k\leq n}{\max }\left\Vert x_{n}(t_{k}^{n})\right\Vert )\int\limits_{t^{n}_{i}}^{t^{n}_{i+1}}\int\limits_{T_{0}}^{\tau}\beta_{2}(\tau,s)\,ds\,d\tau .
\end{align*}
Iterating, it follows that
\begin{align*}
\left\Vert x_{n}(t^{n}_{i+1}) \right\Vert&\leq \left\Vert x_{0} \right\Vert +\sum\limits_{k=0}^{i}\int\limits_{t^{n}_{k}}^{t^{n}_{k+1}} \vert \dot{\upsilon}(\tau) \vert\,d\tau
 + 2(1+\underset{0\leq j\leq n}{\max }\left\Vert x_{n}(t_{j}^{n})\right\Vert )\sum\limits_{k=0}^{i}\int\limits_{t^{n}_{k}}^{t^{n}_{k+1}}\beta_{1}(\tau)\,d\tau\\
& +2(1+\underset{0\leq j\leq n}{\max }\left\Vert x_{n}(t_{j}^{n})\right\Vert )\sum\limits_{k=0}^{i}\int\limits_{t^{n}_{k}}^{t^{n}_{k+1}}\int\limits_{T_{0}}^{\tau}\beta_{2}(\tau,s)\,ds\,d\tau
\end{align*}
This yields  the following inequality
\begin{align}
\left\Vert x_{n}(t^{n}_{i+1}) \right\Vert&\leq \left\Vert x_{0} \right\Vert +\int\limits_{T_{0}}^{t^{n}_{i+1}} \vert \dot{\upsilon}(\tau) \vert\,d\tau + 2(1+\underset{0\leq k\leq n}{\max }\left\Vert x_{n}(t_{k}^{n})\right\Vert )\int\limits_{T_{0}}^{t^{n}_{i+1}}\beta_{1}(\tau)\,d\tau\notag\\
&+2(1+\underset{0\leq k\leq n}{\max }\left\Vert x_{n}(t_{k}^{n})\right\Vert )\int\limits_{T_{0}}^{t^{n}_{i+1}}\int\limits_{T_{0}}^{\tau}\beta_{2}(\tau,s)\,ds\,d\tau.\label{es4}
\end{align}
The inequality \eqref{es4} being true for all $i\in\left\{ 0,\cdots,n-1\right\} $, we have
\begin{align*}
\underset{0\leq k\leq n}{\max }\left\Vert x_{n}(t_{k}^{n})\right\Vert&\leq \left\Vert x_{0} \right\Vert +\int\limits_{T_{0}}^{T} \vert \dot{\upsilon}(\tau) \vert\,d\tau + 2(1+\underset{0\leq k\leq n}{\max }\left\Vert x_{n}(t_{k}^{n})\right\Vert )\int\limits_{T_{0}}^{T}\beta_{1}(\tau)\,d\tau\\
&+2(1+\underset{0\leq k\leq n}{\max }\left\Vert x_{n}(t_{k}^{n})\right\Vert )\int\limits_{T_{0}}^{T}\int\limits_{T_{0}}^{\tau}\beta_{2}(\tau,s)\,ds\,d\tau,
\end{align*}
which gives by \eqref{less_beta}
\begin{equation*}
\underset{0\leq k\leq n}{\max }\left\Vert x_{n}(t_{k}^{n})\right\Vert\leq \left\Vert x_{0} \right\Vert +\int\limits_{T_{0}}^{T} \vert \dot{\upsilon}(\tau) \vert\,d\tau+\dfrac{1}{2}(1+\underset{0\leq k\leq n}{\max }\left\Vert x_{n}(t_{k}^{n})\right\Vert ).
\end{equation*}
This can be rewritten as
\begin{equation}\label{es5}
\underset{0\leq k\leq n}{\max }\left\Vert x_{n}(t_{k}^{n})\right\Vert\leq M,
\end{equation}
where $ M:=2\bigg(\left\Vert x_{0} \right\Vert+\int\limits_{T_{0}}^{T} \vert \dot{\upsilon}(\tau) \vert\,d\tau+\dfrac{1}{2}\bigg) $.\\
On one hand, from the growth condition of $f_{1}$, $f_{2}$  and  \eqref{es5} we have, for almost all $t$ and for all $n$,
\begin{equation}\label{e1}
\left\Vert f_{1}(t,x_{n}(\theta _{n}(t)))\right\Vert \leq \beta_{1}(t)(1+\left\Vert x_{n}(\theta _{n}(t))\right\Vert )\leq (1+M)\beta_{1}(t).
\end{equation}
\begin{equation}\label{e2}
\left\Vert f_{2}(t,s,x_{n}(\theta _{n}(s)))\right\Vert\leq \beta_{2}(t,s)(1+\left\Vert x_{n}(\theta _{n}(s))\right\Vert )\leq (1+M)\beta_{2}(t,s).
\end{equation}
Hence, \eqref{es0} implies for almost all $t$ and for all $n$
\begin{align}\label{hyp}
\left\Vert \dot{x}_{n}(t)+f_{1}(t,x_{n}(\theta _{n}(t)))+\displaystyle\int\limits_{T_{0}}^{t}f_{2}(t,s,x_{n}(\theta _{n}(s)))\,ds\right\Vert &\leq (1+M)\bigg(\beta_{1}(t)+\int\limits_{T_{0}}^{t}\beta_{2}(t,s)\,ds\bigg)+\vert \dot{\upsilon}(t) \vert,
\end{align}
and thus
\begin{equation}\label{der}
\left\Vert \dot{x}_{n}(t)\right\Vert\leq 2(1+M)\bigg(\beta_{1}(t)+\int\limits_{T_{0}}^{t}\beta_{2}(t,s)\,ds\bigg)+\vert \dot{\upsilon}(t) \vert.
\end{equation}
\textbf{Step 4. We show that $x_{n}(\cdot)$ converges.} \\
It suffices to show that $x_n(\cdot)$ is a Cauchy sequence in the Banach space
$\left(C(I,H),\left\Vert \cdot\right\Vert_{\infty }\right)$. \ \newline
Let $m,n\in\mathbb{N}$. For almost all $t\in \left[ T_{0},T\right] $, we have
\begin{equation}
\left\{
\begin{array}{c}
-\dot{x}_{n}(t)-f_{1}(t,x_{n}(\theta _{n}(t)))-\displaystyle\int\limits_{T_{0}}^{t}f_{2}(t,s,x_{n}(\theta _{n}(s)))\,ds\in N_{C(t)}(x_{n}(t)), \\
-\dot{x}_{m}(t)-f_{1}(t,x_{m}(\theta _{m}(t)))-\displaystyle\int\limits_{T_{0}}^{t}f_{2}(t,s,x_{m}(\theta _{m}(s)))\,ds\in N_{C(t)}(x_{m}(t)).
\end{array}%
\right.
\end{equation}
Let us set
\begin{gather*}
\alpha(t):=(1+M)\bigg(\beta_{1}(t)+\int\limits_{T_{0}}^{t}\beta_{2}(t,s)\,ds\bigg)+\vert \dot{\upsilon}(t) \vert,\\
 \gamma(t):=2(1+M)\bigg(\beta_{1}(t)+\int\limits_{T_{0}}^{t}\beta_{2}(t,s)\,ds\bigg)+\vert \dot{\upsilon}(t) \vert.
\end{gather*}
The absolute continuity of $x_n(\cdot)$ gives by \eqref{der}
\begin{equation}\label{xn_eta}
\left\Vert x_{n}(t)\right\Vert \leq \eta \text{ for all }t \in \left[ T_{0},T\right],
\end{equation}%
with
\[
\eta :=\left\Vert x_{0}\right\Vert +\int\limits_{T_{0}}^{T}%
\gamma (s)\,ds.
\]%
Using \eqref{hyp} and the hypomonotonicity of the normal cone $N(C(t);\cdot)$, we get that
\begin{align*}
&\langle \dot{x}_{n}(t)+f_{1}(t,x_{n}(\theta _{n}(t)))+\displaystyle\int\limits_{T_{0}}^{t}f_{2}(t,s,x_{n}(\theta _{n}(s)))\,ds-\dot{x}_{m}(t)-f_{1}(t,x_{m}(\theta _{m}(t)))\\
&-\displaystyle\int\limits_{T_{0}}^{t}f_{2}(t,s,x_{m}(\theta _{m}(s)))\,ds, x_{n}(t)-x_{m}(t)\rangle\leq \dfrac{\alpha(t)}{r}\left\Vert x_{n}(t)-x_{m}(t) \right\Vert^{2}
\end{align*}
Therefore
\begin{align*}
&\left\langle \dot{x}_{n}(t)-\dot{x}_{m}(t),x_{n}(t)-x_{m}(t)\right\rangle \leq \dfrac{\alpha(t)}{r}\left\Vert x_{n}(t)-x_{m}(t) \right\Vert^{2}\\
&+\left\langle f_{1}(t,x_{n}(\theta _{n}(t)))-f_{1}(t,x_{m}(\theta _{m}(t))), x_{m}(t)-x_{n}(t) \right\rangle\\
&+\left\langle \displaystyle\int\limits_{T_{0}}^{t}f_{2}(t,s,x_{n}(\theta _{n}(s)))\,ds-\displaystyle\int\limits_{T_{0}}^{t}f_{2}(t,s,x_{m}(\theta _{m}(s)))\,ds , x_{m}(t)-x_{n}(t) \right\rangle.
\end{align*}
Applying the Lipschitz continuity of $f_{1}(t,\cdot)$ and $f_{2}(t,s,\cdot)$ with Lipschitz radius $L_{1}^{\eta}(\cdot),L_{2}^{\eta}(\cdot)\in L^{1}(I,\mathbb{R}_{+})$ on the bounded subset ${B}[0,\eta]$, it follows that
\begin{align*}
&\frac{1}{2}\frac{d}{dt}\left\Vert x_{n}(t)-x_{m}(t)\right\Vert^{2} \leq\dfrac{\alpha(t)}{r}\left\Vert x_{n}(t)-x_{m}(t) \right\Vert^{2}\\
&+ L_{1}^{\eta}(t)\left\Vert x_{n}(t)-x_{m}(t)\right\Vert
\Big( \left\Vert x_{n}(\theta _{n}(t))-x_{n}(t)\right\Vert
 +\left\Vert x_{n}(t)-x_{m}(t)\right\Vert +\left\Vert x_{m}(t)-x_{m}(\theta_{m}(t))\right\Vert \Big)\\
&+L_{2}^{\eta}(t)\left\Vert x_{n}(t)-x_{m}(t)\right\Vert\Big( \int\limits_{T_{0}}^{t}\left\Vert x_{n}(\theta _{n}(s))-x_{n}(s)\right\Vert ds
 +\int\limits_{T_{0}}^{t}\left\Vert x_{n}(s)-x_{m}(s)\right\Vert ds +\int\limits_{T_{0}}^{t}\left\Vert x_{m}(t)-x_{m}(\theta_{m}(t))\right\Vert ds \Big)
\end{align*}
By \eqref{der}, we have for each $n\in\mathbb{N}$ and for all $t$,
\[
\left\Vert x_{n}(t)-x_{n}(\theta _{n}(t))\right\Vert =\left\Vert \int\limits_{\theta _{n}(t)}^{t}\dot{x}_{n}(\tau)d\tau\right\Vert \leq \int\limits_{\theta _{n}(t)}^{t}\left\Vert \dot{x}_{n}(\tau)\right\Vert
d\tau\leq\int\limits_{\theta _{n}(t)}^{t}\gamma (\tau)d\tau.
\]%
Therefore
\begin{align*}
&\frac{1}{2}\frac{d}{dt}\left\Vert x_{n}(t)-x_{m}(t)\right\Vert^{2}\leq\dfrac{\alpha(t)}{r}\left\Vert x_{n}(t)-x_{m}(t) \right\Vert^{2}+L_{1}^{r}(t)\left\Vert x_{n}(t)-x_{m}(t)\right\Vert^{2} \\
&+ L_{1}^{\eta}(t)\left\Vert x_{n}(t)-x_{m}(t)\right\Vert
\Big( \int\limits_{\theta _{n}(t)}^{t}\gamma (\tau)d\tau+\int\limits_{\theta _{m}(t)}^{t}\gamma (\tau)d\tau \Big)\\
&+L_{2}^{\eta}(t)\left\Vert x_{n}(t)-x_{m}(t)\right\Vert\Big( \int\limits_{T_{0}}^{t}\int\limits_{\theta _{n}(s)}^{s}\gamma (\tau)d\tau\,ds+\int\limits_{T_{0}}^{t}\int\limits_{\theta _{m}(s)}^{s}\gamma (\tau)d\tau\,ds \Big)\\
&+L_{2}^{\eta}(t)\left\Vert x_{n}(t)-x_{m}(t)\right\Vert\int\limits_{T_{0}}^{t}\left\Vert x_{n}(s)-x_{m}(s)\right\Vert ds.
\end{align*}
Moreover, noting by $\eqref{xn_eta}$ that
\[
\left\Vert x_{n}(t)-x_{m}(t)\right\Vert \leq \left\Vert x_{n}(t)\right\Vert
+\left\Vert x_{m}(t)\right\Vert \leq 2\eta,
\]
we deduce that
\begin{align}
&\frac{1}{2}\frac{d}{dt}\left\Vert x_{n}(t)-x_{m}(t)\right\Vert^{2}\leq\dfrac{\alpha(t)}{r}\left\Vert x_{n}(t)-x_{m}(t) \right\Vert^{2}+L_{1}^{\eta}(t)\left\Vert x_{n}(t)-x_{m}(t)\right\Vert^{2}\notag \\
&+ 2\eta \L_{1}^{\eta}(t)\Big( \int\limits_{\theta _{n}(t)}^{t}\gamma (\tau)d\tau+\int\limits_{\theta _{m}(t)}^{t}\gamma (\tau)d\tau \Big)+2\eta L_{2}^{\eta}(t)\Big( \int\limits_{T_{0}}^{t}\bigg[\int\limits_{\theta _{n}(s)}^{s}\gamma(\tau)d\tau +\int\limits_{\theta _{m}(s)}^{s}\gamma(\tau)d\tau\bigg]\,ds \Big)\notag\\
&+L_{2}^{\eta}(t)\left\Vert x_{n}(t)-x_{m}(t)\right\Vert\int\limits_{T_{0}}^{t}\left\Vert x_{n}(s)-x_{m}(s)\right\Vert ds.\label{ww}
\end{align}
Let us put
\begin{equation*}
G_{n,m}(t):=2\eta L_{1}^{r}(t)\Big( \int\limits_{\theta _{n}(t)}^{t}\gamma (\tau)d\tau+\int\limits_{\theta _{m}(t)}^{t}\gamma(\tau)d\tau \Big),
\end{equation*}
\begin{equation*}
\widetilde{G}_{n,m}(s):=\int\limits_{\theta _{n}(s)}^{s}\gamma (\tau)d\tau +\int\limits_{\theta _{m}(s)}^{s}\gamma(\tau)d\tau
\end{equation*}
Since $\gamma (\cdot)\in L^{1}(I,\mathbb{R}_{+})$ and for each $t\in I$, we have $\theta _{n}(t),\theta
_{m}(t)\longrightarrow t$, then
\begin{equation}\label{congmn}
\underset{n,m\rightarrow +\infty}{\lim } G_{n,m}(t)=0 \quad\text{and}\quad
\underset{n,m\rightarrow +\infty}{\lim } \widetilde{G}_{n,m}(t)=0.
\end{equation}
On the other hand, for each $n\in\mathbb{N}$ writing
\[
\int_{\theta _{n}(t)}^{t}\gamma (s)ds\leq \int_{T_{0}}^{T}\gamma (s)ds,
\]%
we see that
\begin{equation*}
\left\vert G_{n,m}(t)\right\vert \leq 4\eta L_{1}^{\eta}(t) \left(
\int_{T_{0}}^{T}\gamma (s)ds\right) \quad\text{and}\quad
\left\vert \tilde{G}_{n,m}(s)\right\vert \leq 4 \left(
\int_{T_{0}}^{T}\gamma (s)ds\right) .
\end{equation*}
Therefore, for all $t\in \left[ T_{0},T\right]$ by \eqref{congmn}
and the dominated convergence theorem, we obtain
\begin{equation}
\underset{n,m\rightarrow +\infty}{\lim }\int_{T_{0}}^{T}G_{n,m}(t)dt=0.
\label{in2}
\end{equation}
\begin{equation}
\underset{n,m\rightarrow+\infty}{\lim }\int_{T_{0}}^{T}\tilde{G}_{n,m}(s)ds=0.
\label{in22}
\end{equation}
Note also by \eqref{ww} that
\begin{align*}
\frac{1}{2}\frac{d}{dt}\left\Vert x_{n}(t)-x_{m}(t)\right\Vert^{2}&\leq\bigg(\dfrac{\alpha(t)}{r}+L_{1}^{\eta}(t)\bigg)\left\Vert x_{n}(t)-x_{m}(t)\right\Vert^{2} +G_{n,m}(t)+2\eta L_{2}^{\eta}(t)\Big( \int\limits_{T_{0}}^{T}\widetilde{G}_{n,m}(s)\,ds \Big)\\
&+L_{2}^{\eta}(t)\left\Vert x_{n}(t)-x_{m}(t)\right\Vert\int\limits_{T_{0}}^{t}\left\Vert x_{n}(s)-x_{m}(s)\right\Vert ds.
\end{align*}
Applying Lemma \ref{8} with
\begin{gather*}
\rho(t)=\lVert x_{n}(t)-x_{m}(t) \rVert^{2},\,\, K_{1}(t)=2\bigg(\dfrac{\alpha(t)}{r}+L_{1}^{\eta}(t)\bigg),\,\, K_{2}(t)=2L_{2}^{\eta}(t)\\
 \varepsilon(t):=\varepsilon_{n,m}(t)=2G_{n,m}(t)+4\eta L_{2}^{\eta}(t)\Big( \int\limits_{T_{0}}^{T}\widetilde{G}_{n,m}(s)\,ds \Big),\,\, \epsilon>0,
\end{gather*}
 we then see that
\begin{align*}
\lVert x_{n}(t)-x_{m}(t) \rVert\leq &\sqrt{\lVert x_{n}(T_{0})-x_{m}(T_{0}) \rVert^{2}+\epsilon}\,\exp\bigg(\int\limits_{0}^{t}(K(s)+1)\,ds\bigg)\\
&+ \dfrac{\sqrt{\epsilon}}{2}\int\limits_{T_{0}}^{t}\exp\bigg(\int\limits_{s}^{t}(K(\tau)+1)\,d\tau\bigg)\,ds \\
&+2\bigg(\sqrt{\int\limits_{T_{0}}^{t}\varepsilon_{n,m}(s)\,ds+\epsilon} -\exp\bigg(\int\limits_{T_{0}}^{t}(K(\tau)+1)\,d\tau\bigg)\sqrt{\epsilon}\bigg)\\
& + 2\int\limits_{T_{0}}^{t}(K(s)+1)\exp\bigg(\int\limits_{s}^{t}(K(\tau)+1)\,d\tau\bigg)\sqrt{\int\limits_{T_{0}}^{s}\varepsilon_{n,m}(\tau)\,d\tau+\epsilon} \,\,ds.
\end{align*}	
where $ K(t):=\max\bigg\{\dfrac{\alpha(t)}{\eta}+L_{1}^{r}(t),L_{2}^{\eta}(t)\bigg\} $, for almost all $ t\in[T_{0},T] $.\\
This, along with the fact that $ \lVert x_{n}(T_{0})-x_{m}(T_{0}) \rVert=0 $ and taking $ \epsilon\rightarrow 0 $, we get
\begin{equation*}
\underset{n,m\rightarrow+\infty}{\lim }\Vert x_{n}(\cdot)-x_{m}(\cdot) \Vert_{\infty}=0.
\end{equation*}
Therefore, the sequence $(x_{n}(\cdot))$ is a Cauchy sequence in
$(\mathcal{C}(\left[ T_{0},T\right] ,H),\left\Vert\cdot\right\Vert _{\infty })$ and
therefore converges uniformly to a function $x(\cdot)\in \mathcal{C}(\left[ T_{0},T\right] ,H).$ \\
\textbf{Step 5. We show that }$x(\cdot)$ \textbf{is absolutely continuous. }\newline
We have for almost all $t\in I$ and for any $n$,
\begin{equation*}
\left\Vert \dot{x}_{n}(t)\right\Vert \leq\gamma (t).
\end{equation*}
So we can extract a subsequence
of $(\dot{x}_{n}(\cdot))$ (that, without loss of generality, we do not relabel)
which
converges weakly in $L^{1}\left( I,H\right) $ to a function $g(\cdot)\in L^{1}\left( I,H\right).$
This means that
\begin{equation*}
\int_{T_{0}}^{T}\left\langle \dot{x}_{n}(s),h(s)\right\rangle ds\longrightarrow \int_{T_{0}}^{T}\left\langle g(s),h(s)\right\rangle ds,\forall\; h\in L^{\infty}(I,H).
\end{equation*}%
Now observe that for all $z\in H$
\[
\int_{T_{0}}^{T}\left\langle \dot{x}%
_{n}(s),z\cdot\mathrm{1}_{[T_{0},t] }(s)\right\rangle ds=\int_{T_{0}}^{t}\left\langle \dot{x}_{n}(s),z\right\rangle ds=\langle z,\int_{T_{0}}^{t}\dot{x}_{n}(s)ds\rangle.
\]
and
\[
\int_{T_{0}}^{T}\left\langle g(s),z\cdot\mathrm{1}_{[
T_{0},t] }(s)\right\rangle ds=\int_{T_{0}}^{t}
\left\langle g(s),z\right\rangle ds=\langle z,\int_{T_{0}}^{t}
g(s)ds\rangle.
\]%
So from the weak convergence we deduce that
\[
\int_{T_{0}}^{t}\dot{x}_{n}(s)ds \longrightarrow \int_{T_{0}}^{t}g(s)ds \text{ weakly in } H.
\]
This implies that
\[ x_{n}(T_{0})+\int_{T_{0}}^{t}\dot{x}_{n}(s)ds\longrightarrow x(T_{0})+\int_{T_{0}}^{t}g(s)ds%
\text{ weakly in }H.
\]%
But $x_{n}(\cdot)$ is absolutely continuous, so
\[
x_{n}(t)=x_{n}(T_{0})+\int_{T_{0}}^{t}\dot{x}_{n}(s)ds\longrightarrow x(T_{0})+\int_{T_{0}}^{t}g(s)ds
\text{ weakly in }H.
\]%
On the other hand, we have for all $t\in [ T_{0},T] $
\[
x_{n}(t)\longrightarrow x(t)\text{ strongly in }H,
\]%
hence we get
\[
x(t)=x(T_{0})+\int_{T_{0}}^{t}g(s)ds.
\]%
Therefore, $x(\cdot)$ is absolutely continuous
and $\dot{x}(t)=g(t)$ a.e. $t\in \left[ T_{0},T\right] $, so in particular
\begin{equation}\label{xn_etaa}
\left\Vert x(t)\right\Vert \leq \tilde{\eta}\text{ for all }t\in \left[ T_{0},T\right],
\end{equation}%
with
\[
\tilde{\eta} :=\left\Vert x_{0}\right\Vert +\int\limits_{T_{0}}^{T}%
g(s)\,ds.
\]\ \ \newline
\textbf{Step 6. We show that $x(\cdot)$ is a solution of $(P_{f_{1},f_{2}})$ .} \newline
For each $t\in I$, since $\theta _{n}(t)\longrightarrow t$ for all $t\in I$ and $x_{n}(\cdot)$
converges uniformly to $x(\cdot)$, we have $x_{n}(\theta_{n}(t))\longrightarrow x(t)$.\\
Let us set for each $t\in I$
\begin{equation*}
y_{n}(t):=\int\limits_{T_{0}}^{t}f_{2}(t,s,x_{n}(\theta _{n}(s)))ds,\,\,\text{and}\,\,y(t):=\int\limits_{T_{0}}^{t}f_{2}(t,s,x(s))ds.
\end{equation*}

We have shown in the above step that $\dot{x}_{n}(\cdot)$ converges weakly to $\dot{x}(\cdot)$ in $L^{1}(I,H)$. \newline
Moreover, by \eqref{xn_eta} and \eqref{xn_etaa} we can choose some
real $c>0$ such that, for each $n$, $\lVert x_{n}(\theta_{n}(t))\rVert,\lVert x(t)\rVert\leq c$ for all $t\in [T_{0},T]$. Therefore, by assumption, there exists\\ $ L^{c}_{1}(\cdot),L^{c}_{2}(\cdot)\in L^{1}([T_{0},T],\mathbb{R}_{+}) $ such that $ f_{1}(t,\cdot) $ and $ f_{2}(t,s,\cdot) $ are $ L^{c}_{1}(t) $-Lipschitz and $ L^{c}_{2}(t) $-Lipschitz respectively on $ B[0,c] $. It follows that
\begin{align}\label{cf}
\int\limits_{T_{0}}^{T}\left\Vert f_{1}(t,x_{n}(\theta _{n}(t)))-f_{1}(t,x(t)) \right\Vert dt \leq \int\limits_{T_{0}}^{T} L^{c}_{1}(t)\left\Vert x_{n}(\theta _{n}(t))-x(t) \right\Vert dt
\end{align}
\begin{align}\label{cff}
\int\limits_{T_{0}}^{T}\left\Vert y_{n}(t)-y(t) \right\Vert dt \leq \int\limits_{T_{0}}^{T} L^{c}_{2}(t)\int\limits_{T_{0}}^{t}\left\Vert x_{n}(\theta _{n}(s))-x(s) \right\Vert ds\, dt.
\end{align}
Note that for every  $ (t,s)\in Q_{\Delta} $
\begin{equation*}
L^{c}_{1}(t)\left\Vert x_{n}(\theta _{n}(t))-x(t) \right\Vert\leq 2cL^{c}_{1}(t),
\end{equation*}
\begin{equation*}
L^{c}_{2}(t)\int\limits_{T_{0}}^{t}\left\Vert x_{n}(\theta _{n}(s))-x(s) \right\Vert ds\leq 2c(T-T_{0})L^{c}_{2}(t).
\end{equation*}
Then by \eqref{cf}, \eqref{cff} and by the Lebesgue dominated convergence theorem
\[
f_{1}(\cdot,x_{n}(\theta _{n}(\cdot)))\longrightarrow f_{1}(\cdot,x(\cdot))\text{ strongly in }
L^{1}(I,H).
\]
\[
y_{n}(\cdot)\longrightarrow y(\cdot) \text{ strongly in }
L^{1}(I,H).
\]
This implies that
\[
\zeta _{n}(\cdot):=\dot{x}_{n}(\cdot)+f_{1}(\cdot,x_{n}(\theta _{n}(\cdot)))+y_{n}(\cdot)\longrightarrow \zeta(\cdot):= \dot{x
}(\cdot)+f_{1}(\cdot,x(\cdot))+y(\cdot)
\]weakly in $ L^{1}(I,H) $.\\
By Mazur's lemma we can find a convex combination $ \sum\limits_{k=n}^{r(n)}S_{k,n}\zeta _{k}(\cdot) $, with $ \sum\limits_{k=n}^{r(n)}S_{k,n}=1 $ and $ S_{k,n}\in[0,1] $ for all $ k,n $,  which converges strongly in $ L^{1}(I,H) $ to $ \zeta(\cdot)  $. Extracting a subsequence, we may suppose that $ \sum\limits_{k=n}^{r(n)}S_{k,n}\zeta _{k}(\cdot) $ converges almost everywhere on $ I $ to some mapping $ \zeta(\cdot) $.\\
Further, we know that there is a negligible set $N \subset I$ such that for each $t\in I \setminus N$ one has for all $n\in \mathbb{N}$
\begin{equation*}
-\zeta _{n}(t):=-\dot{x}_{n}(t)-f_{1}(t,x_{n}(\theta _{n}(t)))-\displaystyle\int\limits_{T_{0}}^{t}f_{2}(t,s,x_{n}(\theta _{n}(s)))\,ds\in N_{C(t)}(x_{n}(t)).
\end{equation*}
Fix any $t\in I\setminus N$ and any $n\in \mathbb{N}$. From  Definition \ref{def} of the normal cone, one has for every $z\in C(t)$
\begin{equation*}
\langle -\zeta _{n}(t) , z-x_{n}(t) \rangle\leq \dfrac{\gamma(t)}{2r}\Vert z-x_{n}(t) \Vert^{2}\,\,\,\,\text{for all}\,\,\,z\in C(t),
\end{equation*}
hence
\begin{equation}\label{18}
\langle -\zeta _{n}(t) , z-x_{n}(t) \rangle\leq \dfrac{\gamma(t)}{2r}(\Vert z-x(t) \Vert+\Vert x(t)-x_{n}(t) \Vert)^{2}:=\lambda_{n}(t),
\end{equation}
with $ \lim\limits_{n\longrightarrow \infty}\lambda_{n}(t)=\dfrac{\gamma(t)}{2r}\lVert z-x(t) \rVert^{2} $. Therefore,
 \begin{align*}
 \langle -\zeta(t),z-x(t) \rangle
 &=  \langle -\zeta(t)+\sum\limits_{k=n}^{r(n)}S_{k,n}\zeta _{k}(t),z-x(t) \rangle
 +\sum\limits_{k=n}^{r(n)}S_{k,n}\langle -\zeta _{k}(t),z- x_{k}(t) \rangle\\
 &+\sum\limits_{k=n}^{r(n)}S_{k,n}\langle -\zeta _{k}(t) ,-x(t)+ x_{k}(t) \rangle.
 \end{align*}
 The first expression of the second member of the latter equality tends to zero by what precedes, and keeping in mind that $|\zeta_k(t)|\leq \gamma(t)$, we also see that the third expression tends to zero.
Concerning the second expression, thanks to \eqref{18}, it satisfies the estimate
 \begin{align*}
 \sum\limits_{k=n}^{r(n)}S_{k,n}\langle -\zeta _{k}(t),z- x_{k}(t) \rangle\leq \sum\limits_{k=n}^{r(n)}S_{k,n}\lambda_{k}(t).
 \end{align*}
 Thus, passing to the limit we obtain
 \begin{equation*}
 \langle -\zeta(t),z-x(t) \rangle\leq \dfrac{\gamma(t)}{2r}\lVert z-x(t) \rVert^{2}, \,\,\,\,\,\,\,\,\forall \,  z\in C(t).
 \end{equation*}
 This proves that
 \begin{equation*}
  -\dot{x}(t)-f_{1}(t,x(t))-\int\limits_{T_{0}}^{t}f_{2}(t,s,x(s))ds \in N_{C(t)}(x(t)),\,\,\,  a.e.\,\,\,  t\in[T_{0},T],
 \end{equation*}
  and thus
  \begin{equation*}
  -\dot{x}(t) \in N_{C(t)}(x(t))+f_{1}(t,x(t))+\int\limits_{T_{0}}^{t}f_{2}(t,s,x(s))ds ,\,\,\,  a.e.\,\,\,  t\in[T_{0},T].
  \end{equation*}
	
   Now consider the situation when
  \begin{equation*}
\int\limits_{T_0}^T\bigg[\beta_{1}(\tau)+\int\limits_{T_0}^{\tau}\beta_{2}(\tau,s)\,ds\bigg] d\tau\geq\dfrac{1}{4}.
\end{equation*}
We  fix a subdivision of $ [T_{0},T] $ given by $ T_{0},T_{1},...,T_{k}=T $ such that, for any\\ $ 0\leq i\leq k-1 $,
\begin{equation*}
\int\limits_{T_i}^{T_{i+1}}\bigg[\beta_{1}(\tau)+\int\limits_{T_0}^{\tau}\beta_{2}(\tau,s)\,ds\bigg] d\tau<\dfrac{1}{4}.
\end{equation*}
Then, by what precedes, there exists an absolutely continuous map $ x_{0} : [T_0,T_{1}]\longrightarrow H $ such that $ x_{0}(T_0)=x_{0} $, $ x_{0}(t)\in C(t) $ for all $ t\in [T_0,T_{1}] $, and
    \begin{equation*}
 -\dot{x}_{0}(t) \in N_{C(t)}(x_{0}(t))+f_{1}(t,x_{0}(t))+\int\limits_{T_{0}}^{t}f_{2}(t,s,x_{0}(s))\,ds,\,\,\,  a.e.\,\,\,  t\in[T_0,T_{1}] .
    \end{equation*}
    Similarly, there is  an absolutely continuous map $ x_{1} : [T_{1},T_{2}]\longrightarrow H $ such that\\ $ x_{1}(T_{1})=x_{0}(T_{1}) $, $ x_{1}(t)\in C(t) $ for all $ t\in [T_{1},T_{2}] $, and
    \begin{equation*}
    -\dot{x}_{1}(t) \in N_{C(t)}(x_{1}(t))+f_{1}(t,x_{1}(t))+\int\limits_{T_{0}}^{t}f_{2}(t,s,x_{1}(s))\,ds,\,\,\,  a.e.\,\,\,  t\in[T_{1},T_{2}].
    \end{equation*}
    By induction, we obtain for each $ 0\leq i \leq k-1 $ a finite sequence of absolutely continuous maps $  x_{i} : [T_{i},T_{i+1}]\longrightarrow H  $ such that for each $ 0\leq i \leq k-1 $, $ x_{i}(T_{i})=x_{i-1}(T_{i}) $ and $ x_{i}(t)\in C(t) $ for all $ t\in [T_{i},T_{i+1}] $, and
    \begin{equation*}
    -\dot{x}_{i}(t) \in N_{C(t)}(x_{i}(t))+f_{1}(t,x_{i}(t))+\int\limits_{T_{0}}^{t}f_{2}(t,s,x_{i}(s))\,ds,\,\,\,  a.e.\,\,\,  t\in[T_{i},T_{i+1}] .
    \end{equation*}
    We set $ x_{-1}(0)=x_{0} $ and define the mapping $ x : [T_{0},T]\longrightarrow H $ given by
    \begin{equation*}
    x(t)=x_{i}(t),\,\,\,\\text{if}\,\,\,t\in[T_{i},T_{i+1}],\,\,\,0\leq i\leq k-1 .
    \end{equation*}
    Obviously, $ x(\cdot) $ is an absolutely continuous mapping satisfying $ x(T_{0})=x_{0} $, $ x(t)\in C (t) $ for all $ t\in[T_{0},T] $ and
    \begin{equation}\label{34}
    -\dot{x}(t) \in N_{C(t)}(x(t))+f_{1}(t,x(t))+\int\limits_{T_{0}}^{t}f_{2}(t,s,x(s))\,ds,\,\,\,  a.e.\,\,\,  t\in[T_{0},T] .
    \end{equation}
  \textbf{Step 7. We prove the estimations. }\newline
Let $x(\cdot)$ be a solution of $(P_{f_{1},f_{2}})$.\\
Take $N\subset [T_0,T]$ such that $\lambda(N)=0$ and the inclusion \eqref{34} holds for every $t\in[T_0,T]\setminus N$. Fix any $t\in [T_0,T]\setminus N$.

 By definition of proximal normal cone, there is some real $a_0>0$ such that for
any $a\in (0,a_0]
$
\begin{equation*}
x(t)\in \mathrm{Proj}_{C(t)}(x(t) -a \dot{x}(t)-a f_{1}(t,x(t))-a\int\limits_{T_0}^{t}f_{2}(t,s,x(s))\,ds).
\end{equation*}
 We derive from the latter inclusion that
\begin{align*}
  & a\lVert  \dot{x}(t)+ f_{1}(t,x(t))+\int\limits_{T_0}^{t}f_{2}(t,s,x(s))\,ds \rVert =\displaystyle d_{C(t)}\Big(x(t) -a \dot{x}(t)-a\varepsilon f_{1}(t,x(t))
-a\int\limits_{T_0}^{t}f_{2}(t,s,x(s))\,ds\Big)\\
&\leq |\upsilon(t)-\upsilon(\tau)|+\Big\| x(t)-x(\tau)-a \dot{x}(t)
  -a f_{1}(t,x(t))-a\int\limits_{T_0}^{t}f_{2}(t,s,x(s))\,ds\Big\|,
\end{align*}
since $x(\tau)\in C(\tau)$ for all $\tau\in[T_0,T]$. For any $\tau\in T_0,t[$ with
$t-a_0< \tau <t$, taking $a=t-\tau$  one obtains
\begin{align*}
 & \Big\|  \dot{x}(t)+ f_{1}(t,x(t))+\int\limits_{T_0}^{t}f_{2}(t,s,x(s))\,ds \Big\|\\
 &\leq \dfrac{\rvert\upsilon(t)-\upsilon(\tau)\lvert}{ t-\tau }+\Big\| \dfrac{x(t)-x(\tau)}
{t-\tau}- \dot{x}(t) -f_{1}(t,x(t))-\int\limits_{T_0}^{t}f_{2}(t,s,x(s))\,ds\Big\|.
\end{align*}
Making $\tau \uparrow t$ yields
\begin{align}
\Big\|  \dot{x}(t)+ f_{1}(t,x(t))+\int\limits_{T_0}^{t}f_{2}(t,s,x(s))\,ds \Big\| &\leq
\Big\|\dot{\upsilon}(t)\rvert+\lVert  -f_{1}(t,x(t))-\int\limits_{T_0}^{t}f_{2}(t,s,x(s))\,ds \Big \|\notag\\
&\leq \lvert\dot{\upsilon}(t)\rvert + \lVert  f_{1}(t,x(t)) \rVert + \int\limits_{T_0}^{t}\lVert f_{2}(t,s,x(s)) \rVert \,ds.\label{35}
\end{align}
This justifies  \eqref{30}.

   Now assume
\begin{equation*}
\int\limits_{T_0}^{t}\bigg[\beta_{1}(\tau)+\int\limits_{T_0}^{\tau}\beta_{2}(\tau,s)\,ds\bigg] d\tau<\dfrac{1}{4}.
\end{equation*}
We have from \eqref{e1}, \eqref{e2} and \eqref{hyp} that the estimates \eqref{31}, \eqref{32} and \eqref{33} are obviously fulfilled.\\
If in addition
\begin{equation*}
\lVert f_{2}(t,s,x) \rVert\leq  g(t,s) +\alpha(t)\lVert x \rVert
\end{equation*}
 we have from \eqref{35} that
\begin{align}
\lVert  \dot{x}(t) \rVert &\leq \rvert\dot{\upsilon}(t)\rvert +2 \lVert  f_{1}(t,x(t)) \rVert + 2\int\limits_{T_0}^{t}\lVert f_{2}(t,s,x(s)) \rVert \,ds\notag\\
&\leq \rvert\dot{\upsilon}(t)\rvert +2\beta_{1}(t)(1+\lVert x(t) \rVert)+2\int\limits_{T_0}^{t}g(t,s)\,ds+2\alpha(t)\int\limits_{T_0}^{t}\lVert x(s) \rVert\,ds\notag\\
&=\rvert\dot{\upsilon}(t)\rvert +2\beta_{1}(t)+2\int\limits_{T_0}^{t}g(t,s)\,ds+2\beta_{1}(t)\Vert x(t) \Vert+2\alpha(t)\int\limits_{T_0}^{t}\lVert x(s) \rVert\,ds. \label{mmmm}
\end{align}
Putting $\rho(t):=\Vert x_{0} \Vert +\displaystyle\int\limits_{T_0}^{t}\Vert \dot{x}(s) \Vert \,ds $ and noting that
 $ \Vert x(t) \Vert\leq \rho(t) $,  the inequality \eqref{mmmm} ensures  that
 \begin{equation*}
 \dot{\rho}(t)\leq \rvert\dot{\upsilon}(t)\rvert +2\beta_{1}(t)+2\int\limits_{T_0}^{t}g(t,s)\,ds+2\beta_{1}(t)\rho(t)+2\alpha(t)\int\limits_{T_0}^{t}\rho(s)\,ds.
 \end{equation*}
Applying Gronwall Lemma \ref{22} with $ \rho(\cdot) $, one obtains
\begin{align*}
\lVert x(t) \rVert\leq \rho(t) &\leq \lVert x_{0} \rVert\exp\bigg(\int\limits_{T_0}^{t}(b(\tau)+1)\,d\tau\bigg)\\
&+\int\limits_{T_0}^{t}\bigg(\rvert\dot{\upsilon}(s)\rvert +2\beta_{1}(s)+2\int\limits_{T_0}^{s}g(s,\tau)\,d\tau\bigg)\exp\bigg(\int\limits_{s}^{t}(b(\tau)+1)\,d\tau\bigg)\,ds,
\end{align*}
where $b(\tau):=2\max\{\beta_{1}(\tau),\alpha(\tau)\}$ for almost all $\tau\in [T_0,T]$. This yields the validity of \eqref{41}, \eqref{42} and  \eqref{43}.\\
\textbf{Step 8. Uniqueness.}\newline
Now, we turn to the uniqueness. If $ x_{1}(\cdot), x_{2}(\cdot) $ are two solutions, the hypo-monotonicity property of the normal cone yields for almost all $ t \in[T_0,T] $
 \begin{align*}
& \langle -\dot{x}_{1}(t)-f_{1}(t,x_{1}(t))-\int\limits_{T_0}^{t} f_{2}(t,s,x_{1}(s))\,ds+\dot{x}_{2}(t)+f_{1}(t,x_{2}(t))+\int\limits_{T_0}^{t} f_{2}(t,s,x_{2}(s))\,ds,\\
& x_{2}(t)-x_{1}(t)\rangle\leq \dfrac{1}{2r}\lVert x_{2}(t)-x_{1}(t)  \rVert^{2}\sum\limits_{i=1}^{2} \bigg(\lVert  \dot{x}_{i}(t) \rVert+\lVert  f_{1}(t,x_{i}(t)) \rVert + \int\limits_{T_0}^{t}\lVert f_{2}(s,x_{i}(s)) \rVert \,ds \bigg),
 \end{align*}
 from which we obtain
  \begin{align*}
 & \langle \dot{x}_{2}(t)-\dot{x}_{1}(t) , x_{2}(t)-x_{1}(t)\rangle\\
 &\leq\dfrac{1}{2r}\lVert x_{2}(t)-x_{1}(t)  \rVert^{2}\sum\limits_{i=1}^{2} \bigg(\lVert  \dot{x}_{i}(t) \rVert+\lVert  f_{1}(t,x_{i}(t)) \rVert + \int\limits_{T_0}^{t}\lVert f_{2}(t,s,x_{i}(s)) \rVert \,ds \bigg)\\&
+ \langle f_{1}(t,x_{1}(t))-f_{1}(t,x_{2}(t)) , x_{2}(t)-x_{1}(t) \rangle\\
  &+ \langle \int\limits_{T_0}^{t} f_{2}(t,s,x_{1}(s))\,ds-\int\limits_{T_0}^{t} f_{2}(t,s,x_{2}(s))\,ds , x_{2}(t)-x_{1}(t) \rangle.
  \end{align*}
  Since the absolutely continuous mappings $x_{1}(\cdot)$ and $x_{2}(\cdot)$ are in particular bounded on $[T_0,T]$, we can choose some
real $\eta>0$ such that, for each $i=1,2$, $\lVert x_{i}(t)\rVert\leq \eta$ for all $t\in [T_0,T]$. The latter inequality assures us that
  \begin{align*}
  &\dfrac{d}{dt}\dfrac{1}{2}\lVert x_{2}(t)-x_{1}(t)  \rVert^{2}\leq L_{2}^{\eta}(t)\lVert x_{2}(t)-x_{1}(t)  \rVert\int\limits_{T_0}^{t}\lVert x_{2}(s)-x_{1}(s)  \rVert\,ds \\
&+\bigg(L_{1}^{\eta}(t)+\dfrac{1}{2r}\sum\limits_{i=1}^{2} \bigg(\lVert  \dot{x}_{i}(t) \rVert+\lVert  f_{1}(t,x_{i}(t)) \rVert + \int\limits_{T_{0}}^{t}\lVert f_{2}(t,s,x_{i}(s)) \rVert \,ds \bigg)\bigg)\lVert x_{2}(t)-x_{1}(t)  \rVert^{2}.
  \end{align*}
  Finally, setting $ \rho(t):=\lVert x_{2}(t)-x_{1}(t)  \rVert^{2} $ we get
  \begin{align*}
  \dot{\rho}(t)&\leq \bigg(2L_{1}^{\eta}(t)+\dfrac{1}{r}\sum\limits_{i=1}^{2} \bigg(\lVert  \dot{x}_{i}(t) \rVert+\lVert  f_{1}(t,x_{i}(t)) \rVert + \int\limits_{T_0}^{t}\lVert f_{2}(t,s,x_{i}(s)) \rVert \,ds \bigg)\bigg)\rho(t)\\
  &+2L_{2}^{\eta}(t)\sqrt{\rho(t)}\int\limits_{T_0}^{t}\sqrt{\rho(s)}\,ds,
  \end{align*}
hence it suffices to invole Lemma \ref{8}  with  $ \varepsilon(\cdot),\epsilon> 0 $ arbitrary. Then the proof  of the theorem is complete .
\end{proof}
\begin{proposition}\label{stability}
 Assume that the assumptions of Theorem \ref{exist} (  in case $ 3 $ ) holds. For each $a\in C(T_{0})$,
denote by $x_{a}(\cdot)$ the unique solution of the integro-differential   sweeping process
\begin{equation*}
\left\{
\begin{array}{l}
-\dot{x}(t) \in N_{C(t)}(x(t))+f_{1}(t,x(t))+\displaystyle\int\limits_{T_{0}}^{t}f_{2}(t,s,x(s))\,ds\quad a.e \; in \; [T_{0},T]\\
x(T_{0})=a\in C(T_{0})
\end{array}\right.
\end{equation*}
Then, the map  $ \psi : a \longrightarrow x_{a}(\cdot) $  from $C(T_{0})$ to the space $C([T_{0},T],H)$ endowed with the
uniform convergence norm is Lipschitz on any bounded subset of $C(T_{0})$.
\end{proposition}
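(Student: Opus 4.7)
The plan is to mimic the uniqueness argument of Step 8 in the proof of Theorem \ref{exist}, but now keeping track of the dependence on the initial data. Fix a bounded subset $B \subset C(T_0)$ and let $R := \sup_{a \in B}\|a\|$. For $a_1, a_2 \in B$, denote $x_i := x_{a_i}$, $i=1,2$. The first task is to produce a uniform a priori bound: since we are working under the hypotheses of case $3$ of Theorem \ref{exist}, the estimates \eqref{41}, \eqref{42}, \eqref{43} together with the explicit formula for $\widetilde M$ show that $\|x_i(t)\|$, $\|f_1(t,x_i(t))\|$, $\int_{T_0}^t \|f_2(t,s,x_i(s))\|\,ds$ and $\|\dot x_i(t)\|$ are all bounded by quantities depending only on $R$ and on the data $\beta_1, g, \alpha, \upsilon$. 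In particular there is some $\eta = \eta(R)$ with $\|x_i(t)\|\leq \eta$ for all $t \in [T_0,T]$, $i=1,2$.

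Next I would differentiate $\tfrac12 \|x_1(t)-x_2(t)\|^2$. Using the $r$-prox-regularity of $C(t)$ and the hypomonotonicity relation for the normal cone (the Proposition after Definition \ref{def}), together with the Lipschitz property of $f_1(t,\cdot)$ and $f_2(t,s,\cdot)$ on $B[0,\eta]$ with Lipschitz constants $L_1^\eta(t)$, $L_2^\eta(t)$, I obtain a differential inequality of the same shape as the one derived in Step 8, namely
\begin{equation*}
\dot \rho(t) \leq K_1(t)\,\rho(t) + K_2(t)\sqrt{\rho(t)}\int_{T_0}^t \sqrt{\rho(s)}\,ds,
\end{equation*}
where $\rho(t):=\|x_1(t)-x_2(t)\|^2$, the coefficient $K_1(t)$ collects $2L_1^\eta(t)$ and the prox-regularity correction $\frac{1}{r}\sum_{i=1}^{2}(\|\dot x_i(t)\|+\|f_1(t,x_i(t))\|+\int_{T_0}^{t}\|f_2(t,s,x_i(s))\|\,ds)$, and $K_2(t) = 2L_2^\eta(t)$. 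By the a priori bounds above, both $K_1$ and $K_2$ belong to $L^1([T_0,T],\mathbb R)$ with norms depending only on $R$.

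I then apply the Gronwall-like Lemma \ref{8} with $\varepsilon(\cdot)\equiv 0$ and arbitrary $\epsilon > 0$: this gives
\begin{equation*}
\sqrt{\rho(t)} \leq \sqrt{\rho(T_0)+\epsilon}\,\exp\Bigl(\int_{T_0}^{t}(K(s)+1)\,ds\Bigr) + O(\sqrt{\epsilon}),
\end{equation*}
with $K(t) = \max\{K_1(t)/2,K_2(t)/2\}$. Letting $\epsilon \downarrow 0$ and noting $\rho(T_0) = \|a_1-a_2\|^2$, this yields
\begin{equation*}
\|x_{a_1}(t)-x_{a_2}(t)\| \leq \|a_1-a_2\|\,\exp\Bigl(\int_{T_0}^{T}(K(s)+1)\,ds\Bigr) \quad\text{for all } t \in [T_0,T],
\end{equation*}
which is the claimed Lipschitz estimate, with a constant depending only on $R$ (and the data of the problem). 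Taking the supremum over $t$ gives the conclusion for $\psi$.

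The main obstacle is, as usual in the prox-regular setting, controlling the hypomonotonicity correction term $\frac{1}{r}\sum_i(\|\dot x_i\|+\|f_1(\cdot,x_i)\|+\int_{T_0}^{\cdot}\|f_2\|\,ds)$ uniformly in $a_1, a_2 \in B$: this is exactly where case $3$ of Theorem \ref{exist} is essential, because its estimates provide an $L^1$ bound for this quantity that depends only on $R$ and not on the individual initial points.
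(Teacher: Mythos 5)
Your proposal is correct and follows essentially the same route as the paper: uniform a priori bounds from case 3 of Theorem \ref{exist} (the paper packages the quantity $\lVert \dot{x}_z+f_1+\int f_2\rVert$ into a single function $\alpha(t)$ with a constant $M_1$ depending only on the bound of the initial set, which is exactly your $\widetilde M$-based bound), then hypomonotonicity of the prox-regular normal cone plus the Lipschitz properties on $B[0,\eta]$ to get the differential inequality $\dot\rho\leq K_1\rho+K_2\sqrt{\rho}\int\sqrt{\rho}$, and finally Lemma \ref{8} with $\varepsilon\equiv 0$ and $\epsilon\downarrow 0$ to conclude the Lipschitz estimate with constant $\exp\bigl(\int_{T_0}^{T}(K(s)+1)\,ds\bigr)$. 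No gaps worth noting.
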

\begin{proof}
Let $M$ be any fixed positive real number. We are going to prove that $\psi$ is Lipschitz
on $C(T_{0})\cap M\mathbb{B}$.\\
According to Theorem \ref{exist} ( case $ 3 $ ), there exists a real number $M_{1}$ depending only on $M$ such
that, for all $z\in C(T_{0})\cap M\mathbb{B} $ and for almost all $(t,s)\in Q_{\Delta}$
\begin{equation*}
\lVert \dot{x}_{z}(t)+f_{1}(t,x_{z}(t))+\int\limits_{T_{0}}^{t}f_{2}(t,s,x_{z}(s))\,ds \rVert\leq \alpha(t):= \rvert\dot{\upsilon}(t)\rvert+ (1+M_{1})\beta_{1}(t)+\int\limits_{T_0}^{t}g(t,s)\,ds+T\alpha(t)M_{1},
\end{equation*}
Thanks to this last inequality, for some $\eta > 0$ depending only on $M$, for all $z\in C(T_{0})\cap M\mathbb{B} $ and for all $t\in [T_{0},T]$, we have
\begin{equation}\label{kk1}
x_{z}(t)\in B[0,\eta] .
\end{equation}
Fix any $a,b\in C(T_{0})\cap M\mathbb{B} $ . By the hypomonotonicity property of the normal cone, we have for almost all $(t,s)\in Q_{\Delta}$
\begin{align*}
& \langle -\dot{x}_{a}(t)-f_{1}(t,x_{a}(t))-\int\limits_{T_{0}}^{t} f_{2}(t,s,x_{a}(s))\,ds+\dot{x}_{b}(t)+f_{1}(t,x_{b}(t))+\int\limits_{T_{0}}^{t} f_{2}(t,s,x_{b}(s))\,ds, x_{2}(t)-x_{1}(t)\\
 &\leq \dfrac{\alpha(t)}{r}\lVert x_{b}(t)-x_{a}(t)  \rVert^{2},
 \end{align*}
 from which we obtain
  \begin{align*}
  \langle \dot{x}_{b}(t)-\dot{x}_{a}(t) , x_{b}(t)-x_{a}(t)\rangle&\leq \dfrac{\alpha(t)}{r}\lVert x_{b}(t)-x_{a}(t)  \rVert^{2}
+ \langle f_{1}(t,x_{a}(t))-f_{1}(t,x_{b}(t)) , x_{b}(t)-x_{a}(t) \rangle\\
  &+ \langle \int\limits_{T_{0}}^{t} f_{2}(t,s,x_{a}(s))\,ds-\int\limits_{T_{0}}^{t} f_{2}(t,s,x_{b}(s))\,ds , x_{b}(t)-x_{a}(t) \rangle.
  \end{align*}
Since, by assumption $ \mathcal{(}\mathcal{H}_{2,2}\mathcal{)} $ and $ \mathcal{(}\mathcal{H}_{3,2}\mathcal{)} $, there are a non-negative functions $L_{1}^{\eta}(\cdot)$, $L_{2}^{\eta}(\cdot)$ $ \in L^{1}([T_{0},T],\mathbb{R}) $  such
that $f_{1}(t,\cdot)$ and $f_{2}(t,s,\cdot)$ are $L_{1}^{\eta}(\cdot)$-Lipschitz, $L_{2}^{\eta}(\cdot)$-Lipschitz (respectively) on $B[0,\eta] $, the above
inequality along with \eqref{kk1}, entails that for almost all $t\in [T_{0},T]$,
\begin{align*}
  \dfrac{d}{dt}\lVert x_{b}(t)-x_{a}(t)  \rVert^{2}&\leq 2\bigg(L^{\eta}_{1}(t)+\dfrac{\alpha(t)}{r} \bigg)\lVert x_{b}(t)-x_{a}(t)  \rVert^{2}+2L_{2}^{\eta}(y)\lVert x_{b}(t)-x_{a}(t)  \rVert\int\limits_{T_{0}}^{t}\lVert x_{b}(s)-x_{a}(s)  \rVert\,ds.
  \end{align*}
Applying Gronwall-like differential inequality in Lemma \ref{8} , it results that
\begin{equation*}
\sup\limits_{t\in [0,T]}\lVert x_{b}(t)-x_{a}(t)  \rVert \leq \lVert b-a \rVert \exp\bigg(\int\limits_{T_{0}}^{t}(K(s)+1)\,ds\bigg),
\end{equation*}
where $ K(t):=\max\bigg\{L_{1}^{\eta}(t)+\dfrac{\alpha(t)}{r},L_{2}^{\eta}(t)\bigg\} $, for almost all $ t\in[T_{0},T] $. The proof is then complete.
\end{proof}
\section{Nonlinear integro-differential complementarity systems}\label{s5}\ \\
In this section, as a consequence of Theorem \ref{exist}, we obtain the existence and uniqueness of
solutions for nonlinear integro-differential complementarity systems. Our results generalize those from \cite{an}.\\

Let $T>T_0$ be real numbers, $I=[T_0,T]$, $n,m\in\mathbb{N}$, $f _{1}: I\times\mathbb{R}^{n}\longrightarrow \mathbb{R}^{n}  $, $f _{2}: Q_{\Delta}\times\mathbb{R}^{n}\longrightarrow \mathbb{R}^{n}  $  and $g : I\times\mathbb{R}^{n}\longrightarrow \mathbb{R}^{m} $ be given mappings. Assuming that $g(t,\cdot)$ is differentiable for each $t\in I$, the NIDCS (associated with $f_1$, $f_2$ and $g$)
can be described as
\begin{equation*}
\text{( NIDCS ):} \left\{
\begin{array}{l}
-\dot{x}(t) =f_{1}(t,x(t))+\displaystyle\int\limits_{T_0}^{t}f_{2}(t,s,x(s))\,ds+\nabla\,g(t,\cdot)(x(t))^{T}z(t)\quad \\
0\leq z(t) \perp g(t,x)\leq 0,
\end{array}\right.
\end{equation*}
where $z : I\longrightarrow \mathbb{R}^{m} $ is unknown mapping. The term $\nabla\,g(t,\cdot)(x(t))^{T}z(t)$  can be seen as the generalized
reactions due to the constraints in mechanics.\\
Of course, the behaviour of a solution with respect to $t$ is connected to the variation with respect to $t$ of the set constraint
\begin{equation}\label{e}
C(t)=\{ x\in \mathbb{R}^{n} : g_{1}(t,x)\leq 0,  g_{2}(t,x)\leq 0,..., g_{m}(t,x)\leq 0 \},
\end{equation}
where we set $g(t, \cdot) = (g_1(t, \cdot),g_2(t, \cdot),..., g_m(t, \cdot))$ for each $t \in I.$
\begin{theorem}\cite{an}\label{61}
Let $C(t)$ be defined as in \eqref{e} and assume that, there exists an extended real $ \rho\in ]0,\infty]$ such that
\begin{enumerate}
\item for all $t\in I$, for all $k\in\{1,...,m\}$, $g_{k}(t,\cdot)$ is continuously differentiable on $U_{\rho}(C(t)):=\{y\in\mathbb{R}^{n}:\,\,d_{C(t)}(y)<\rho\}$;\\
\item there exists a real $\gamma>0$ such that, for all $t\in I$, for all $k\in\{1,...,m\}$, $g_{k}(t,\cdot)$, for all $x,y\in U_{\rho}(C(t)) $
\begin{equation*}
\langle \nabla g_{k}(t,\cdot)(x)-\nabla g_{k}(t,\cdot)(y),x-y \rangle\geq -\gamma \lVert x-y \rVert^{2},
\end{equation*}
that is, $\nabla g_{k}(t,\cdot)$ is $\gamma$-hypomonotone on $U_{\rho}(C(t))$;\\
\item there is a real $\delta>0$
 such that for all $(t,x)\in I\times\mathbb{R}^{n}$ with $x\in\, bdry\, (C(t))$, there exists $\bar{\upsilon}\in\mathbb{B}$ satisfying, for all $k\in\{1,...,m\}$
\begin{equation}
\langle \nabla g(t,\cdot)(x),\bar{\upsilon} \rangle\leq -\delta .
\end{equation}
\end{enumerate}
Then for all $t\in I$, the set $C(t)$ is r-prox-regular with $r=\min\{\rho, \dfrac{\delta}{\gamma}\}$ .
\end{theorem}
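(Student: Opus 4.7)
The plan is to verify the defining inequality of $r$-prox-regularity from Definition \ref{def} directly: fixing $t\in I$, I take $x\in C(t)$ and a nonzero proximal normal $\zeta\in N^P_{C(t)}(x)$, and aim at
\begin{equation*}
\Bigl\langle \tfrac{\zeta}{\|\zeta\|},\,y-x\Bigr\rangle \le \tfrac{\|y-x\|^2}{2r},\qquad \forall\, y\in C(t),
\end{equation*}
with $r=\min\{\rho,\delta/\gamma\}$. For $x$ in the interior of $C(t)$ one has $N^P_{C(t)}(x)=\{0\}$ and the inequality is vacuous, so I would focus on boundary points, where the active index set $I(t,x):=\{k:g_k(t,x)=0\}$ is nonempty.

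The first ingredient is a multiplier representation of the proximal normal cone: under the Mangasarian--Fromovitz-type qualification of hypothesis 3 together with the $C^1$ regularity in hypothesis 1, I would show that every $\zeta\in N^P_{C(t)}(x)$ admits a nonnegative decomposition $\zeta = \sum_{k\in I(t,x)} \mu_k\,\nabla g_k(t,\cdot)(x)$ with $\mu_k\ge 0$. The KKT-style decomposition for the nonconvex system is made available by the $\gamma$-hypomonotonicity in hypothesis 2, which implies that each $g_k(t,\cdot)+\tfrac{\gamma}{2}\|\cdot\|^2$ is convex on segments inside $U_\rho(C(t))$, allowing a sum-rule for proximal subdifferentials of indicator functions to be applied. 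Integrating the hypomonotonicity along segments in $U_\rho(C(t))$ simultaneously yields the semiconvex estimate
\begin{equation*}
\langle \nabla g_k(t,\cdot)(x),\,y-x\rangle \le g_k(t,y)-g_k(t,x) + \tfrac{\gamma}{2}\|y-x\|^2,
\end{equation*}
valid whenever $x,y\in C(t)$ and $\|y-x\|<\rho$ (so the connecting segment lies in the tube).

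Plugging the representation into this estimate and using $g_k(t,x)=0$ for active $k$ and $g_k(t,y)\le 0$ gives
\begin{equation*}
\langle \zeta,\,y-x\rangle \le \sum_{k} \mu_k\, g_k(t,y) + \tfrac{\gamma}{2}\Bigl(\sum_{k} \mu_k\Bigr)\|y-x\|^2 \le \tfrac{\gamma}{2}\Bigl(\sum_{k} \mu_k\Bigr)\|y-x\|^2.
\end{equation*}
To control the total mass of multipliers I would test the representation against the Slater direction $\bar\upsilon\in\mathbb{B}$ provided by hypothesis 3:
\begin{equation*}
\langle \zeta,\bar\upsilon\rangle \;=\; \sum_{k\in I(t,x)}\mu_k\,\langle \nabla g_k(t,\cdot)(x),\bar\upsilon\rangle \;\le\; -\delta\sum_{k}\mu_k,
\end{equation*}
so that Cauchy--Schwarz gives $\sum_k \mu_k \le \|\zeta\|/\delta$. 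Combining yields $\langle \zeta,y-x\rangle \le \tfrac{\gamma}{2\delta}\|\zeta\|\|y-x\|^2$, which is the prox-regularity inequality with radius $\delta/\gamma$ as soon as $\|y-x\|<\rho$. The truncation at $\rho$ imposed by the semiconvex step is exactly what produces the final radius $\min\{\rho,\delta/\gamma\}$, after a standard localization argument passing from this local inequality to the global formulation of Definition \ref{def}.

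The main obstacle is the multiplier representation: the nonconvexity of $C(t)$ blocks a direct appeal to convex Lagrangian duality, and one must instead push a nonsmooth-analysis calculus (sum rule for proximal normal cones under MFCQ) through data that are only semiconvex after the quadratic shift, uniformly in $t$ and only on the tube $U_\rho(C(t))$. Once this decomposition is available, the remaining pieces --- integrating hypomonotonicity to a semiconvex inequality, bounding $\sum_k\mu_k$ via the Slater direction, and assembling the pieces --- are essentially mechanical.
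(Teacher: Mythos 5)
The paper itself offers no proof of this statement: Theorem \ref{61} is quoted verbatim from \cite{an}, so your attempt can only be measured against the standard argument for such results. Your outline is the right one and is essentially sound: represent a proximal normal $\zeta\in N^P_{C(t)}(x)$ at a boundary point as $\sum_{k\in I(t,x)}\mu_k\nabla g_k(t,\cdot)(x)$ with $\mu_k\ge 0$, integrate the $\gamma$-hypomonotonicity along the segment $[x,y]$ to get the semiconvexity estimate $\langle\nabla g_k(t,\cdot)(x),y-x\rangle\le g_k(t,y)-g_k(t,x)+\tfrac{\gamma}{2}\|y-x\|^2$, kill the terms $g_k(t,y)\le 0$, and bound $\sum_k\mu_k\le\|\zeta\|/\delta$ by testing against $\bar\upsilon$; this delivers the constant $\tfrac{\gamma}{2\delta}$ and hence the radius $\delta/\gamma$ locally. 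One remark on the decomposition, which you flag as the main obstacle: it does not require any semiconvexity-based sum rule. Since $\zeta\in N^P_{C(t)}(x)$ means $x$ is a local minimizer of the smooth function $y\mapsto -\langle\zeta,y\rangle+\sigma\|y-x\|^2$ over $C(t)$, and hypotheses 1 and 3 give $C^1$ constraints satisfying MFCQ at $x$, the classical KKT theorem immediately yields $\zeta=\sum_{k\in I(t,x)}\mu_k\nabla g_k(t,\cdot)(x)$ with $\mu_k\ge 0$. Attributing this step to hypomonotonicity is misleading and, as sketched, would itself need proof.

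The one step of yours that would fail as written is the final localization. You prove $\langle\zeta/\|\zeta\|,y-x\rangle\le\tfrac{\gamma}{2\delta}\|y-x\|^2$ only for $y\in C(t)$ with $\|y-x\|<\rho$, and the only ``free'' global ingredient is Cauchy--Schwarz, which makes the prox-regularity inequality of Definition \ref{def} automatic only when $\|y-x\|\ge 2r$: indeed $\tfrac{\|y-x\|^2}{2r}\ge\|y-x\|\ge\langle\zeta/\|\zeta\|,y-x\rangle$ there. With $r=\min\{\rho,\delta/\gamma\}$ this leaves the annulus $\rho\le\|y-x\|<2r$ uncovered whenever $\delta/\gamma>\rho/2$ (in particular whenever $r=\rho$), so ``a standard localization argument'' does not close the proof by itself. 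The repair is simple and should be stated: for $x,y\in C(t)$ every point $x+s(y-x)$ of the segment lies within distance $\min\{s,1-s\}\|y-x\|\le\|y-x\|/2$ of $C(t)$, so the segment stays in $U_\rho(C(t))$ as soon as $\|y-x\|<2\rho$, and your semiconvex estimate (hence the bound with constant $\tfrac{\gamma}{2\delta}\le\tfrac{1}{2r}$) is valid on the whole range $\|y-x\|<2\rho\supseteq\{\|y-x\|<2r\}$; combining this with the trivial case $\|y-x\|\ge 2r$ gives the inequality for all $y\in C(t)$ and the announced radius $r=\min\{\rho,\delta/\gamma\}$. With these two adjustments your proof is complete and is, in substance, the argument behind the cited result.
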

The nonlinear differential complementarity systems (NDCS) (i.e., (NIDCS) with $f_{2} \equiv 0$ ) was studied in \cite{an}, where the authors transform the (NDCS) involving inequality constraints $C(t)$ to a perturbed sweeping process 
. We extend this approach
by transforming (NIDCS) into an integro-differential sweeping process of the form (\ref{eq1.1-N}). Also, in contrast to  \cite{an}, we do not assume that the moving set $C(t)$ described by a finite number of inequalities is absolutely continuous with respect to the Hausdorff distance. Rather, we provide sufficient verifiable conditions ensuring this regularity needed on $C(\cdot)$.
\begin{proposition}\label{60}
 Let $C(t)$ be defined as in \eqref{e}. Assume that there exist an absolutely continuous function $w$, a real $\delta>0$ and a vector $y\in \mathbb{R}^{n}$ with $\lVert y \rVert=1$ such that for each $i=1,...m$
\begin{equation}\label{70}
g_{i}(t,x)\leq g_{i}(s,x) + \lvert w(t)-w(s)  \rvert , \, \,\,\, for \, all \,\, x\in U_{r}(C(s)),
\end{equation}
\begin{equation}\label{50}
\langle \nabla  g_{i}(t,\cdot)(x),y \rangle\leq -\delta , \,\, for, all\,\, t\in I,\, x\in U_{r}(C(t)),
\end{equation}
where $r$ denotes the prox-regularity constant of all sets $C(t)$.
Then $C(\cdot)$ is $\upsilon(\cdot)-$ absolutely continuous on $I$ with
$\upsilon(\cdot):=\delta^{-1}w(\cdot)$.
\end{proposition}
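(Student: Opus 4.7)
The goal is to show that for every $t,s\in I$ and every $x\in C(t)$ one can find $x'\in C(s)$ with $\|x'-x\|\le\delta^{-1}|w(s)-w(t)|$, which is exactly the inclusion required by $(\mathcal{H}_1)$ with $\upsilon=\delta^{-1}w$. Write $\rho$ for the prox-regularity constant of the sets $C(t)$ (the $r$ appearing in \eqref{70} and \eqref{50}). Assuming without loss of generality $t<s$ and, as a first case, that $w$ is non-decreasing on $[t,s]$, I propose to construct the explicit straight-line path $\xi(\tau):=x+\delta^{-1}(w(\tau)-w(t))\,y$ for $\tau\in[t,s]$, so that $\xi(t)=x\in C(t)$ and $\|\xi(s)-x\|=\delta^{-1}(w(s)-w(t))$. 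Everything then reduces to showing $\xi(\tau)\in C(\tau)$ for every $\tau\in[t,s]$.

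This I will do by a connectedness argument on the set
$$
T:=\{\tau\in[t,s]:\xi(\tau')\in C(\tau')\text{ for all }\tau'\in[t,\tau]\},
$$
which is nonempty since $t\in T$. For openness, fix $\tau\in T$, so that $\xi(\tau)\in C(\tau)$. For $\tau'$ slightly larger than $\tau$, the segment $[\xi(\tau),\xi(\tau')]$ has length $\delta^{-1}(w(\tau')-w(\tau))<\rho$ and lies in $U_\rho(C(\tau))$. A first-order expansion of $g_i(\tau,\cdot)$ along this segment gives
$$
g_i(\tau,\xi(\tau'))-g_i(\tau,\xi(\tau))=\delta^{-1}(w(\tau')-w(\tau))\int_0^1\!\bigl\langle\nabla_x g_i\bigl(\tau,\xi(\tau)+\lambda(\xi(\tau')-\xi(\tau))\bigr),y\bigr\rangle\,d\lambda,
$$
and by \eqref{50} each integrand value is bounded above by $-\delta$; hence $g_i(\tau,\xi(\tau'))\le -(w(\tau')-w(\tau))$. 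Combining this with \eqref{70}, transferred via $\xi(\tau')\in U_\rho(C(\tau))$, yields $g_i(\tau',\xi(\tau'))\le g_i(\tau,\xi(\tau'))+|w(\tau')-w(\tau)|\le 0$, so $\xi(\tau')\in C(\tau')$. Closedness is easier: if $\tau_n\uparrow\tau$ with $\tau_n\in T$, then $\xi(\tau_n)\to\xi(\tau)$ and applying \eqref{70} to $\xi(\tau_n)\in C(\tau_n)\subset U_\rho(C(\tau_n))$ gives $g_i(\tau,\xi(\tau_n))\le|w(\tau)-w(\tau_n)|\to 0$; continuity of $g_i(\tau,\cdot)$ then delivers $g_i(\tau,\xi(\tau))\le 0$, so $\xi(\tau)\in C(\tau)$. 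Connectedness of $[t,s]$ forces $T=[t,s]$, and $x':=\xi(s)$ meets the requirements.

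The main obstacle is the case where $w$ fails to be monotone on $[t,s]$: whenever $w'(\tau)<0$ the correction $\delta^{-1}(w(\tau')-w(\tau))$ changes sign and the expansion above loses the right direction, because \eqref{50} bounds $\langle\nabla g_i,y\rangle$ only from above. The natural remedy is to replace $w$ throughout by its total-variation function $V(\tau):=\int_{T_0}^{\tau}|w'(\sigma)|\,d\sigma$, which is absolutely continuous, non-decreasing, and satisfies $|w(s)-w(t)|\le V(s)-V(t)$; rerunning the argument with $V$ in place of $w$ delivers $C(t)\subset C(s)+\delta^{-1}|V(s)-V(t)|\mathbb{B}$, which is the sense in which the conclusion $\upsilon=\delta^{-1}w$ of the proposition must be understood inside $(\mathcal{H}_1)$.
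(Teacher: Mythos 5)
Your argument is essentially sound, but it follows a genuinely different route from the paper's. The paper builds no path at all: after choosing a subdivision of $I$ so that the variation of $\upsilon$ on each subinterval is less than $r$, it takes $x\in C(s)$ and performs the \emph{single} translation $x\mapsto x+\lvert\upsilon(t)-\upsilon(s)\rvert\,y$, writes
$g_i\big(t,x+\lvert\upsilon(t)-\upsilon(s)\rvert y\big)\le \lvert w(t)-w(s)\rvert+g_i(s,x)+\int_0^1\big\langle \nabla g_i(s,\cdot)\big(x+\theta\lvert\upsilon(t)-\upsilon(s)\rvert y\big),y\big\rangle\,\lvert\upsilon(t)-\upsilon(s)\rvert\,d\theta$,
and uses \eqref{50} to bound this by $\lvert w(t)-w(s)\rvert-\delta\lvert\upsilon(t)-\upsilon(s)\rvert\le 0$; hence $C(s)\subset C(t)+\lvert\upsilon(t)-\upsilon(s)\rvert\mathbb{B}$ with $\upsilon=\delta^{-1}w$ exactly as stated, and the inclusions are chained across the subdivision. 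Because the displacement magnitude is the absolute increment $\lvert\upsilon(t)-\upsilon(s)\rvert\ge 0$ and the direction is the fixed vector $y$, no monotonicity of $w$ is ever needed: the obstacle you describe is an artifact of parametrizing a path by $w(\tau)-w(t)$, whose sign can flip. Your remedy of replacing $w$ by its variation function $V$ is legitimate (since \eqref{70} for $w$ implies the same inequality with $V$) and does verify $(\mathcal{H}_1)$, but it yields $\upsilon=\delta^{-1}V$ rather than the stated $\delta^{-1}w$, which the one-shot argument shows is avoidable. Two smaller remarks: your local steps play the role of the paper's subdivision (keeping the segment inside $U_r(C(\cdot))$ so that \eqref{50} and \eqref{70} apply), which is fine; and in the closedness step you invoke continuity of $g_i(\tau,\cdot)$ at $\xi(\tau)$, which the standing assumptions guarantee only on $U_r(C(\tau))$ --- it is safer to get closedness by running your forward estimate from $\tau_n$ to $\tau$ for $n$ large, which needs no regularity outside $U_r(C(\tau_n))$.
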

\begin{proof}
Let $ s,t\in I $, let $ x\in C(s) $ and choose a subdivision $T_0<T_{1}<...<T_{p}=T$ such that
 $\int\limits_{T_{k-1}}^{T_{k}}\lvert \dot{\upsilon}(\tau) \rvert\,d\tau<r$ for every
$k=1,\cdots,p$.
Fix any $k=1,...,p$ and $s,t\in [T_{k-1},T_{k}]$. Take any $i=1,...,m$ and note that
\begin{align}
g_{i}(t,x+\lvert \upsilon(t)-\upsilon(s)\rvert y )&=(g_{i}(t,x+ \lvert \upsilon(t)-\upsilon(s)\rvert y)-g_{i}(s,x+\lvert \upsilon(t)-\upsilon(s)\rvert y))\notag\\
&+g_{i}(s,x+\lvert \upsilon(t)-\upsilon(s)\rvert y)\notag\\
&\leq \lvert w(t)-w(s)  \rvert +g_{i}(s,x+ \lvert \upsilon(t)-\upsilon(s)\rvert y)\notag\\
&=\lvert w(t)-w(s)  \rvert +g_{i}(s,x)\notag\\
&+\int\limits_{0}^{1}\langle\nabla_2 g_{i}(s,x+\theta y\lvert \upsilon(t)-\upsilon(s)\rvert ),y\lvert \upsilon(t)-\upsilon(s)\rvert\rangle d\,\theta.
\end{align}
According to \eqref{50} and to the inclusion $x\in C(s)$ it ensues that
\begin{equation*}
g_{i}(t,x+\lvert \upsilon(t)-\upsilon(s)\rvert y )\leq \lvert w(t)-w(s)  \rvert - \delta  \lvert \upsilon(t)-\upsilon(s)\rvert \leq 0.
\end{equation*}
 This being true for every $i=1,...,m$, it follows that
$x+\lvert \upsilon(t)-\upsilon(s)\rvert y$ belongs to $C(t)$, otherwise stated, $x\in C(t)+\lvert \upsilon(t)-\upsilon(s)\rvert (-y)$. It results that $  C(s)\subset C(t)+\lvert \upsilon(t)-\upsilon(s)\rvert \mathbb{B} . $
Since the variables s and t play symmetric roles, the set-valued mapping $C(\cdot)$ has an absolutely continuous variation  on $[T_{k-1},T_{k}]$. From this we
clearly derive that $C(\cdot)$ has an absolutely continuous variation on $I$.
\end{proof}

\begin{example}
Let $m=1$, $n=2$, $ T=1 $,   $g(t,x)=t^{\frac{1}{3}}-x_{1}-x_{2}^{2}$,  and define
\begin{equation*}
C(t)=\{ x\in \mathbb{R}^{2} : g(t,x)\leq 0 \} .
\end{equation*}
Clearly that $C(t)$ is r-prox-regular, since $g(t,\cdot)$ satisfies all assumptions of theorem \ref{61}  for all $t\in I$. Now we check \eqref{70} and \eqref{50}. Let $x\in \mathbb{R}^{2}$, $t,s\in I$. Fix any $\delta\in (0,1]$ and put $y=(1,0)$. Then
\begin{equation*}
g(t,x)-g(s,x)=t^{\frac{1}{3}}-s^{\frac{1}{3}}\leq \lvert t^{\frac{1}{3}}-s^{\frac{1}{3}} \rvert= \lvert w(t)-w(s) \rvert,
\end{equation*}
\begin{equation*}
\langle \nabla  g(t,\cdot)(x),y \rangle=-1\leq -\delta .
\end{equation*}
We see that $w(t)= t^{1/3}$ is not Lipschitz on $ I $ but it is absolutely continuous there. Then $C(\cdot)$  has an absolutely continuous variation $\upsilon$ on $I$, with $\upsilon(t)=t^{1/3}/\delta$.
\end{example}
\begin{theorem}
Assume that the assumptions in Theorem \ref{61}, Proposition \ref{60}   and conditions  $ \mathcal{(}\mathcal{H}_{2}\mathcal{)} $, $ \mathcal{(}\mathcal{H}_{3}\mathcal{)} $ are satisfied. Then, for every initial data $x_{0}$ with $g(0,x_{0})\leq 0$, problem (NIDCS) has one and only one solution $x(\cdot)$ .
\end{theorem}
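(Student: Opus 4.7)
The plan is to reduce (NIDCS) to an integro-differential sweeping process of the form \eqref{eq1.1-N} with moving set $C(t)$ given by \eqref{e}, and then to invoke Theorem \ref{exist}. First, I would observe that, thanks to the constraint qualification provided by assumption (3) of Theorem \ref{61} (a Mangasarian--Fromovitz-type condition), one has at every $x\in C(t)$ the standard representation
\begin{equation*}
N_{C(t)}(x)=\Big\{\sum_{i=1}^{m}\lambda_{i}\nabla g_{i}(t,\cdot)(x):\;\lambda_{i}\geq 0,\;\lambda_{i}g_{i}(t,x)=0\Big\}.
\end{equation*}
Consequently, writing $z(t)=(\lambda_{1}(t),\dots,\lambda_{m}(t))$, the complementarity condition $0\leq z(t)\perp g(t,x(t))\leq 0$ together with $\nabla g(t,\cdot)(x(t))^{T}z(t)$ is equivalent to saying that $\nabla g(t,\cdot)(x(t))^{T}z(t)\in N_{C(t)}(x(t))$ and $x(t)\in C(t)$. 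Thus (NIDCS) rewrites as
\begin{equation*}
-\dot{x}(t)\in N_{C(t)}(x(t))+f_{1}(t,x(t))+\int_{T_{0}}^{t}f_{2}(t,s,x(s))\,ds,\quad x(T_{0})=x_{0}\in C(T_{0}),
\end{equation*}
which is exactly $(P_{f_{1},f_{2}})$.

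Next I would verify the hypotheses $(\mathcal{H}_{1})$--$(\mathcal{H}_{3})$ of Theorem \ref{exist}. The prox-regularity constant $r=\min\{\rho,\delta/\gamma\}$ of each $C(t)$ is provided directly by Theorem \ref{61}, and the absolutely continuous variation of $C(\cdot)$ with $\upsilon(\cdot)=\delta^{-1}w(\cdot)$ is exactly the content of Proposition \ref{60}; this yields $(\mathcal{H}_{1})$. Conditions $(\mathcal{H}_{2})$ and $(\mathcal{H}_{3})$ are assumed directly. Applying Theorem \ref{exist} to this data then produces a unique absolutely continuous mapping $x:[T_{0},T]\to\mathbb{R}^{n}$ with $x(T_{0})=x_{0}$ and $x(t)\in C(t)$ solving the sweeping process above.

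To finish, I would recover the multiplier $z(\cdot)$ from the solution $x(\cdot)$: from
\begin{equation*}
-\dot{x}(t)-f_{1}(t,x(t))-\int_{T_{0}}^{t}f_{2}(t,s,x(s))\,ds\in N_{C(t)}(x(t))\quad\text{a.e.},
\end{equation*}
and the explicit representation of $N_{C(t)}(x(t))$, there exist measurable multipliers $z_{i}(t)\geq 0$ with $z_{i}(t)g_{i}(t,x(t))=0$ such that the above inclusion reads as the equality $-\dot{x}(t)=f_{1}(t,x(t))+\int_{T_{0}}^{t}f_{2}(t,s,x(s))\,ds+\nabla g(t,\cdot)(x(t))^{T}z(t)$; measurability follows from a standard measurable selection argument applied to the set-valued map $t\mapsto\{z\geq 0:\nabla g(t,\cdot)(x(t))^{T}z=-\dot{x}(t)-\cdots\}$ under the active-constraint qualification.

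The main obstacle I would expect is ensuring that the normal-cone representation is valid \emph{uniformly} in $t$ and does yield a measurable selection $z(\cdot)$; here the positive linear independence (equivalently MFCQ) coming from condition (3) of Theorem \ref{61} is crucial, as it guarantees both the pointwise representation and the bounded measurable selection of the multipliers. Uniqueness of $x(\cdot)$ is immediate from Theorem \ref{exist}; uniqueness of the multiplier $z(\cdot)$ is not asserted (and typically fails without additional linear-independence conditions), so the statement as given concerns only the uniqueness of the trajectory $x(\cdot)$.
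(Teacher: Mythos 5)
Your proposal is correct and follows essentially the same route as the paper: the paper's own proof simply invokes the equivalence of (NIDCS) with the sweeping process $(P_{f_{1},f_{2}})$ (citing the argument of \cite{an} for the normal-cone reformulation) and then applies Theorem \ref{exist}, with $(\mathcal{H}_{1})$ supplied by Theorem \ref{61} and Proposition \ref{60} exactly as you verify. Your write-up merely makes explicit the steps the paper delegates to the citation (the MFCQ-based representation of $N_{C(t)}$ and the measurable selection of the multiplier $z(\cdot)$), and your remark that uniqueness concerns only the trajectory $x(\cdot)$ is consistent with the statement.
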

\begin{proof}
In the same arguments like in \cite{an} one has the equivalent between the problem (NIDCS) and the integro-differential sweeping process
\begin{equation*}
-\dot{x}(t) \in N_{C(t)}(x(t))+f_{1}(t,x(t))+\int\limits_{T_0}^{t}f_{2}(t,s,x(s))\,ds
\end{equation*}
Therefore, all assumptions of Theorem \ref{exist} are satisfied and the conclusion follows.
\end{proof}


\section{ Applications to non-regular electrical circuits}\label{s6}\ \\

The aim of this section is to illustrate the integro-differential sweeping process in the theory of non-regular
electrical circuits. Electrical devices like diodes are described in terms of Ampere-Volt
characteristic which is (possibly) a multifunction expressing the difference of
potential $v_D$ across the device as a function of current $i_{D}$ going through the device\cite{bog}.\\
Let us consider the electrical system shown in Fig. \ref{fig2} that is composed of three resistors $R_1\geq 0$, $R_2\geq 0$ with voltage/current laws $V_{R_k}=R_k x_k$ ($k=1,2$), two inductors $L_1\geq 0$, $L_2\geq 0$ with voltage/current laws $V_{L_k}=R_k \dot x_k$ ($k=1,2$), three capacitors with a time-varying capacitances $C_1 (t)\neq 0$, $C_2 (t)\neq 0$ and $C_3 (t)\neq 0$ with voltage/current laws $V_{C_k}=\frac{1}{C_k(t)} \int x_k(t)dt,\;k=1,2,3$, two ideal diodes with characteristics $0\leq -V_{D_k}\perp i_k\geq 0$ and an absolutely continuous current source $i:[0, T]\rightarrow R$.
\begin{figure}
\begin{center}
\includegraphics[width=6in]{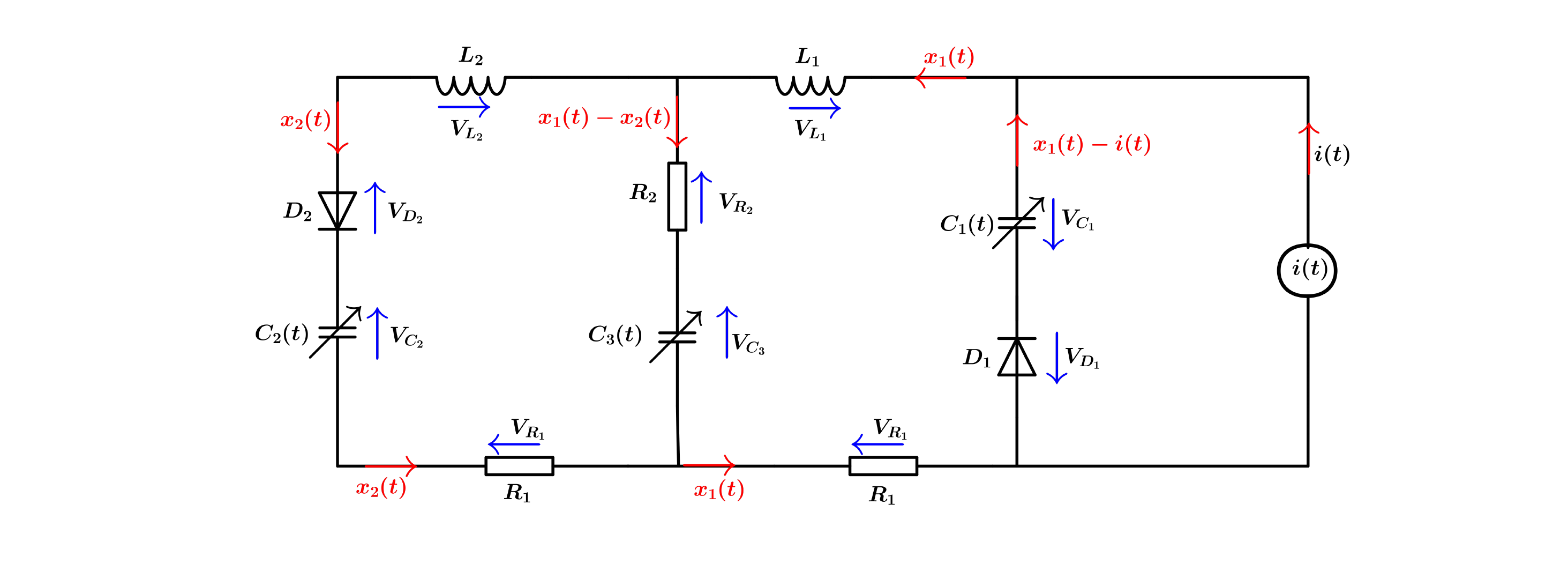}
\caption{Electrical circuit with resistors, inductances, time-varying capacitors and ideal diodes.}
\label{fig2}
\end{center}
\end{figure}

Using  Kirchhoff's laws, we have
$$\left\{
\begin{array}{l}
V_{R_1}+V_{R_2}+V_{L_1}+V_{C_1}+V_{C_3}=-V_{D_1}\in -N(\mathbb{R}_+;x_1-i)\\
V_{R_1}-V_{R_2}+V_{L_2}+V_{C_2}-V_{C_3}=-V_{D_2}\in -N(\mathbb{R}_+;x_2).
\end{array}
\right.
$$
Therefore the dynamics of this circuit is given by
\begin{align}\label{eq2.3}
\overbrace{
\begin{pmatrix}
-\dot x_1(t)\\
-\dot x_2(t)
\end{pmatrix}}^{-\dot x(t)}&\in N_{[i(t),+\infty[\times[0,+\infty[}(x(t))+\overbrace{\begin{pmatrix}
\frac{R_1 +R_2}{L_1}& -\frac{R_2}{L_1}\\
-\frac{R_2}{L_2}&\frac{R_1 +R_2}{L_2}
\end{pmatrix}}^{A_1}\overbrace{
\begin{pmatrix}
x_1 (t)\\
x_2 (t)
\end{pmatrix}}^{x(t)}\notag\\
&+\displaystyle\int\limits_{0}^{t} \bigg[ \overbrace{\begin{pmatrix}
\frac{1}{L_1 C_1(t)}+\frac{1}{L_1 C_3(t)}& -\frac{1}{L_1 C_3(t)}\\
-\frac{1}{L_2 C_3(t)} &\frac{1}{L_2 C_2(t)}+\frac{1}{L_2 C_3(t)}
\end{pmatrix}}^{A_2}\overbrace{
\begin{pmatrix}
x_1(s)\\
 x_2(s)
\end{pmatrix}}^{x(s))}
+\overbrace{
\begin{pmatrix}
\frac{1}{L_1 C_1(t)}i(s)\\
 0
\end{pmatrix}}\bigg] ds.
\end{align}

\begin{proposition}
Assume that $i : [0,T] \longrightarrow \mathbb{R}$ is an absolutely continuous function and $C_{k} : [0,T] \longrightarrow \mathbb{R^{*}},k=1,2,3$ are continuous functions. Then  for any initial condition $x(0)=x_{0}\in C(0)$, problem $\eqref{eq2.3}$ has one and only one absolutely continuous solution $x(\cdot)$.
\end{proposition}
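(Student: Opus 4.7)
The plan is to recognize \eqref{eq2.3} as an instance of the integro-differential sweeping process $(P_{f_1,f_2})$ and to check the hypotheses $(\mathcal{H}_1)$, $(\mathcal{H}_2)$, $(\mathcal{H}_3)$ of Theorem~\ref{exist}, after which existence and uniqueness follow immediately. With $H=\mathbb{R}^{2}$, identify
\[
C(t):=[i(t),+\infty[\,\times\,[0,+\infty[,\quad f_{1}(t,x):=A_{1}x,\quad f_{2}(t,s,x):=A_{2}(t)\,x+\Big(\tfrac{i(s)}{L_{1}C_{1}(t)},\,0\Big)^{T},
\]
where $A_{2}(t)$ is the matrix appearing in \eqref{eq2.3}, so that \eqref{eq2.3} takes the form
\[
-\dot{x}(t)\in N_{C(t)}(x(t))+f_{1}(t,x(t))+\int_{0}^{t}f_{2}(t,s,x(s))\,ds.
\]

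First I would verify $(\mathcal{H}_{1})$. The sets $C(t)$ are closed convex in $\mathbb{R}^{2}$, hence $r$-prox-regular for $r=+\infty$. For $s,t\in[0,T]$, the elementary inclusion $[i(t),+\infty[\,\subset\,[i(s),+\infty[+|i(t)-i(s)|\,[-1,1]$, combined with the invariance of $[0,+\infty[$ in the second coordinate, yields $C(t)\subset C(s)+|\upsilon(t)-\upsilon(s)|\,\mathbb{B}_{H}$ with $\upsilon:=i$, which is absolutely continuous by assumption.

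Next I would check $(\mathcal{H}_{2})$ and $(\mathcal{H}_{3})$. Since $f_{1}$ is a constant linear map, the linear growth $\|f_{1}(t,x)\|\le \|A_{1}\|(1+\|x\|)$ and the global Lipschitz estimate $\|f_{1}(t,x)-f_{1}(t,y)\|\le \|A_{1}\|\|x-y\|$ give $(\mathcal{H}_{2,1})$ and $(\mathcal{H}_{2,2})$ with $\beta_{1}(t)=L_{1}^{\eta}(t)=\|A_{1}\|$. For $f_{2}$, the continuity and nonvanishing of $C_{1},C_{2},C_{3}$ on the compact $[0,T]$ yield a uniform bound $\sup_{t\in[0,T]}\|A_{2}(t)\|\le M_{A}$ and $\sup_{t\in[0,T]}|1/(L_{1}C_{1}(t))|\le M_{*}$; the continuity of $i$ gives $|i(s)|\le M_{i}$. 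Hence
\[
\|f_{2}(t,s,x)\|\le M_{A}\|x\|+M_{*}M_{i}\le \beta_{2}(t,s)(1+\|x\|)
\]
for a constant (thus $L^{1}(Q_{\Delta})$) function $\beta_{2}$, and $\|f_{2}(t,s,x)-f_{2}(t,s,y)\|\le M_{A}\|x-y\|$, which establishes $(\mathcal{H}_{3,1})$ and $(\mathcal{H}_{3,2})$ with $L_{2}^{\eta}(t)=M_{A}$. Measurability of $f_{1}$ and $f_{2}$ is a consequence of their continuity in all variables.

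All hypotheses of Theorem~\ref{exist} are thus in force, and for any initial datum $x_{0}\in C(0)$ the theorem yields a unique absolutely continuous solution $x:[0,T]\to\mathbb{R}^{2}$ of \eqref{eq2.3}. No step is genuinely hard here; the only point requiring minor care is the verification of the absolute continuity of the moving set $C(t)$, which reduces to the absolute continuity of the source $i(\cdot)$ via the explicit inclusion above, and the integrability of the coefficients $1/C_{k}(t)$, which is ensured by continuity and nonvanishing on a compact interval.
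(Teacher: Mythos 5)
Your proposal is correct and follows essentially the same route as the paper: rewrite \eqref{eq2.3} as $(P_{f_1,f_2})$ with $C(t)=[i(t),+\infty[\times[0,+\infty[$, $f_1(t,x)=A_1x$, $f_2(t,s,x)=A_2(t)x+(i(s)/(L_1C_1(t)),0)^T$, verify $(\mathcal{H}_1)$--$(\mathcal{H}_3)$ (convexity gives prox-regularity with $r=+\infty$, absolute continuity of $i$ gives the moving-set variation, continuity and nonvanishing of the $C_k$ give the growth and Lipschitz bounds), and apply Theorem~\ref{exist}. The only cosmetic differences are that the paper takes $\upsilon(t)=\int_0^t|\dot i(s)|\,ds$ rather than $\upsilon=i$ and explicitly notes that the data also satisfy the strengthened hypothesis of case 3, neither of which changes the argument.
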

\begin{proof}
Put $w(t)=(i(t),0)^{t}$, $C(t):=w(t)+[0,+\infty[\times [0,+\infty[$, $f_{1}(t,x)\vspace*{0.1cm}=A_1 x$, $f_{2}(t,s,x)=A_2 (t)x+\dfrac{1}{L_1 C_1(t)}w(s)$.
 So (\ref{eq2.3}) can be rewritten in the
frame of our problem $(P_{f_{1},f_{2}})$ as
\begin{equation*}
  \begin{cases} -\dot{x}(t)\in N_{C(t)}(x(t))+f_{1}(t,x(t))+\displaystyle\int\limits_{0}^{t}f_{2}(t,s, x(s))ds\ a.e.\ in \ [0, T]&\\
  x(0)=x_0\in C(0)
 \end{cases}
   \end{equation*}
Then the above data satisfying  all the assumptions of Theorem \ref{exist} ( precisely case 3 ), with
\begin{equation*}
\upsilon(t)=\int\limits_{0}^{t}\lVert \dot{w}(s) \rVert\,ds,\,\,\,\beta_{1}(t)=\lVert A_{1}\rVert,\,\,\,g(t,s)=\dfrac{1}{L_1 C_1(t)}\lVert w(s) \rVert,\,\,\,\alpha_{2}(t)=\lVert A_{2}(t)\rVert.
\end{equation*}
This finishes the proof.
\end{proof}
\bibliographystyle{amsplain}

\end{document}